\documentclass[11pt]{amsart}
\usepackage{amsmath, amsfonts,amsthm, amssymb,amscd, verbatim, graphicx,color,multirow,booktabs,tikz}
\usepackage{footnote}
\makesavenoteenv{tabular}
\makesavenoteenv{table}
\usepackage{array,etoolbox}
\preto\tabular{\setcounter{magicrownumbers}{0}}
\newcounter{magicrownumbers}

\usepackage{chngcntr}
\counterwithin{table}{section}
\def\classification#1{\def\@class{#1}}
\classification{\null}
\newcounter{rowcount}
\setcounter{rowcount}{0}

\usepackage[margin=1.5in]{geometry}
\usepackage{xcolor,tkz-berge}
\newcommand{\Out}{\mathop{\mathrm{Out}}}

\usepackage{hyperref}

\newcommand{\Alt}{\mathop{\mathrm{Alt}}}
\newcommand{\fix}{{\mathrm{fix}}}
\newcommand{\Sym}{\mathop{\mathrm{Sym}}}
\newcommand{\Aut}{\mathop{\mathrm{Aut}}}
\renewcommand{\wr}{\mathop{\mathrm{wr}}}
\def\nor#1#2{{\bf N}_{{#1}}{{(#2)}}}
\def\centr#1#2{{\bf C}_{{#1}}{{(#2)}}}
\DeclareFontFamily{OT1}{rsfs}{}
\DeclareFontShape{OT1}{rsfs}{n}{it}{<-> rsfs10}{}
\DeclareMathAlphabet{\mathscr}{OT1}{rsfs}{n}{it}
 
\newcommand{\Fq}{\mathbb{F}_q}
\newcommand{\K}{\mathbb{K}}

\newcommand{\Zplus}{\mathbb{Z}^{+}}

\newcommand{\C}{\mathcal{C}}
\newcommand{\B}{\mathcal{B}}

\newcommand{\stab}{\mathrm{Stab}}
\newcommand{\SL}{\mathrm{SL}}
\newcommand{\SU}{\mathrm{SU}}
\newcommand{\SOr}{\mathrm{SO}}
\newcommand{\AGL}{\mathrm{AGL}}

\newcommand{\Sp}{\mathrm{Sp}}
\newcommand{\POmega}{\mathrm{P\Omega}}
\newcommand{\PSL}{\mathrm{PSL}}
\newcommand{\PSU}{\mathrm{PSU}}
\newcommand{\PGL}{\mathrm{PGL}}
\newcommand{\Ree}{{^2\mathrm{G}_2}}

\newcommand{\GL}{\mathrm{GL}}
\newcommand{\GU}{\mathrm{GU}}
\newcommand{\symme}{\mathrm{Sym}}
\newcommand{\alter}{\mathrm{Alt}}

\newtheorem{prop}{Proposition}[section]

\newtheorem*{thma}{Theorem A}
\newtheorem*{thmb}{Theorem B}
\newtheorem{conj}[prop]{Conjecture}

\newtheorem{cor}[prop]{Corollary}
\newtheorem{lem}[prop]{Lemma}

\theoremstyle{definition}

\numberwithin{equation}{section}
\def\Norm#1#2{{\bf N}_{{#1}}({{#2}})}
\begin{document}

\title{Binary permutation groups: alternating and classical groups}

\author{Nick Gill}
\address{ Department of Mathematics, University of South Wales, Treforest, CF37 1DL, U.K.}
\email{nick.gill@southwales.ac.uk}

\author{Pablo Spiga}
\address{Dipartimento di Matematica e Applicazioni, University of Milano-Bicocca, Via Cozzi 55, 20125 Milano, Italy} 
\email{pablo.spiga@unimib.it}

\begin{abstract}
We introduce a new approach to the study of finite binary permutation groups and, as an application of our method,
  we prove Cherlin's binary groups conjecture for groups with socle a finite alternating group, and for the $\mathcal{C}_1$-primitive actions of the finite classical groups.

Our new approach involves the notion, defined with respect to a group action, of a `\emph{beautiful subset}'. We demonstrate how the presence of such subsets can be used to show that a given action is not binary. In particular, the study of such sets will lead to a resolution of many of the remaining open cases of Cherlin's binary groups conjecture.
\end{abstract}
	\keywords{simple groups; primitive groups; maximal subgroups; binary groups; Cherlin Conjecture}

\maketitle

\section{Introduction}\label{s: intro}

In this paper, the following conjecture is our main concern \cite{cherlin1}.

\begin{conj}\label{conj: cherlin original}
A finite primitive binary permutation group must be one of the following:
\begin{enumerate}
 \item a symmetric group $\symme(n)$ acting naturally on $n$ elements;
\item  a cyclic group of prime order acting regularly on itself;
\item an affine orthogonal group $V\cdot {\rm O}(V)$ with $V$ a vector space over a finite field equipped
with an anisotropic quadratic form, acting on itself by translation, with complement
the full orthogonal group ${\rm O}(V)$.
\end{enumerate}
\end{conj}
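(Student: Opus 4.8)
The plan is to prove the conjecture through the O'Nan--Scott classification of finite primitive groups, using the beautiful-subset criterion (whereby the presence of a beautiful subset forces an action to be non-binary) to eliminate, one structure type at a time, every action not appearing in the list (1)--(3). Two directions are needed. The easy direction is that the three families are binary: the natural action of $\symme(n)$ is binary because two tuples of points lie in the same orbit as soon as they share the same pattern of coincidences, and that pattern is already visible on pairs of coordinates; the regular action of a cyclic group of prime order is binary by a direct check; and for $V\cdot\Or(V)$ with $V$ anisotropic, binariness follows from Witt's theorem, since the $\Or(V)$-orbit of a tuple of vectors is determined by its matrix of pairwise inner products and anisotropy removes the degenerate cases. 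The real content is the converse, that nothing else is binary, and this is exactly where beautiful subsets are deployed: it suffices to produce one in every action outside the list, the governing mechanism being the very one that already shows $\alter(n)$ in its natural action is not binary.

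First I would carry out the O'Nan--Scott reduction. Writing $G$ for a primitive group on $\Omega$ with socle $N$, one splits into the affine (HA), almost simple (AS), simple-diagonal (SD/HS), product (PA/CD/HC), and twisted-wreath (TW) types. In the affine case the point stabiliser is an irreducible $G_0\le\GL(V)$, and the analysis should isolate precisely the anisotropic orthogonal groups of family (3) together with the degenerate regular example (2), every other $G_0$ admitting a beautiful subset built from a suitable subspace configuration. In the diagonal, product and twisted-wreath types the socle factorises or carries a wreath decomposition $\Omega=\Delta^k$, and the strategy is to transport symmetry along the components: a beautiful subset of a single factor, or the alternating symmetry among the $k$ coordinates themselves, yields a beautiful subset of $\Omega$, eliminating all of these types outright. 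This reduces the whole conjecture to the \emph{almost simple} case.

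The main obstacle, and by far the largest part of the work, is therefore the almost simple case: one must show that the only binary almost simple primitive group is $\symme(n)$ acting naturally. With $T=\mathrm{soc}(G)$ a finite simple group, this requires, via the classification of finite simple groups, a separate treatment of $T=\alter(n)$, of the classical groups, of the exceptional groups of Lie type, and of the sporadic groups; and for each $T$ one must run through the $G$-classes of maximal subgroups that define the primitive actions. For classical groups these are organised by the Aschbacher classes $\C_1,\dots,\C_8$ and the class $\mathcal{S}$ of almost simple irreducible subgroups, with the Liebeck--Seitz analogue for exceptional groups. The present paper settles $T=\alter(n)$ and the $\C_1$ (subspace) actions of the classical groups by exhibiting beautiful subsets; the programme is to supply, for each remaining class, either a beautiful subset or a direct proof that the relational complexity exceeds $2$.

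I expect the hardest subcases to be the geometric actions of the classical groups in small dimension and, above all, the class $\mathcal{S}$, where the point stabilisers are almost simple and irreducible and carry no uniform geometry from which to read off a beautiful subset. Here the plan is to combine three ingredients: bounds relating a binary group's base size and minimal degree to its order, which confine the problematic configurations to bounded dimension; the construction of beautiful subsets from orbits of a naturally occurring subgroup (for instance a subfield, imprimitive, or tensor-decomposition stabiliser) whenever such structure is present; and, for the residual finite list of small cases, explicit or computational verification that the relational complexity is at least $3$. Assembling these, together with the reduction above, would yield the full statement (1)--(3).
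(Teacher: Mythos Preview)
The statement you are attempting to prove is \emph{Conjecture}~\ref{conj: cherlin original}: it is not a theorem of the paper, and the paper contains no proof of it. The paper's contribution is precisely the partial progress recorded in Theorems~A and~B (alternating socle, and the $\mathcal{C}_1$ actions of classical groups), together with the beautiful-subset machinery; the remaining cases are left open. So there is no ``paper's own proof'' against which to compare your proposal --- what you have written is not a proof but an outline of a research programme, and you should not present it as more than that.

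On the substance of the programme: the reduction you sketch in your second paragraph is already in the literature and is cited as such in the paper --- the affine case was settled by Cherlin, and the reduction from the remaining O'Nan--Scott types to the almost simple case is due to Wiscons. So those steps are not obstacles to be overcome; the entire content of the conjecture lies in the almost simple case (Conjecture~\ref{conj: cherlin}).

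More importantly, your governing assumption that beautiful subsets will ``eliminate, one structure type at a time, every action not appearing in the list'' is already known to fail, and the paper says so explicitly. Table~\ref{t: alt} lists \emph{infinite} families of primitive actions of $\Alt(n)$ and $\Sym(n)$ (Lines~2,~3 and~4) which admit no beautiful subset, and these are dispatched in the proof of Theorem~A by entirely different ad hoc arguments (explicit non-binary witnesses, fixed-point-ratio estimates against \cite{LiebeckSaxl}, and Lemma~\ref{l: auxiliary}). The same phenomenon recurs for classical groups: Table~\ref{t: c1 sln} records $\mathcal{C}_1$ actions with no beautiful subset, and \S\ref{s: forbidden} introduces the broader notion of a forbidden configuration precisely because beautiful subsets are insufficient even for $\PSL_2(q)$. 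Your final paragraph gestures at ``explicit or computational verification'' for a ``residual finite list'', but the exceptions are not finite in number, and the techniques needed to handle them are a substantial part of the difficulty. A genuine proof would have to confront this head-on rather than treat it as mop-up.
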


For readers unfamiliar with the notion of a ``binary permutation group'', we refer to \S\ref{s: background}, where all necessary terminology can be found. Note that in that section, we use the definition from~\cite{cherlin_martin} which is couched in terms of group actions -- this definition is useful, and elementary, but lacking in any obvious motivation. For an equivalent definition in terms of {\it relational structures}, we refer the reader to the opening paragraphs of \cite{cherlin2}; when seen from this point of view, the concept of a binary permutation group becomes a very natural one indeed.

A remark concerning part~(3) of Conjecture~\ref{conj: cherlin original}: Let $V$ be a vector space of dimension $d$ over a finite field $\mathbb{F}_q$ with $q$ elements. Since the quadratic form on $V$ is anisotropic, $\dim_{\mathbb{F}_q}(V)$ is either $1$ or $2$. In the case where $\dim_{\mathbb{F}_q}(V)=1$, ${\rm O}(V)$ is cyclic of order $2$ and hence $V\cdot{\rm O}(V)$ is dihedral of order $2q$ and $q$ is a prime number. In the case where $\dim_{\mathbb{F}_q}(V)=2$, ${\rm O}(V)$ is dihedral of order $2(q+1)$.

Cherlin himself gave a proof of this conjecture when $G$ is ``of affine type'', i.e. $G$ has an elementary abelian socle.  Wiscons then studied some other cases and showed that Conjecture~\ref{conj: cherlin original} reduces to the following statement concerning almost simple groups \cite{wiscons}.

\begin{conj}\label{conj: cherlin}
 If $G$ is a finite binary almost simple primitive group on $\Omega$, then $G=\symme(\Omega)$. 
\end{conj}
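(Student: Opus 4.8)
\medskip
\noindent\textbf{Proof strategy.} The plan is to establish Conjecture~\ref{conj: cherlin} for the groups with socle an alternating group and for the $\mathcal{C}_1$-primitive classical groups, using the \emph{beautiful subset} method. Call a subset $\Delta\subseteq\Omega$ a \emph{beautiful subset} for $G$ if $|\Delta|\geq 4$ and the setwise stabiliser $G_{\{\Delta\}}$ induces on $\Delta$ the alternating group $\Alt(\Delta)$ and no larger group. The first step, and the engine of everything else, is the lemma that \emph{a permutation group possessing a beautiful subset is not binary}: enumerating $\Delta=\{\delta_1,\dots,\delta_m\}$, the $m$-tuples $(\delta_1,\delta_2,\delta_3,\dots,\delta_m)$ and $(\delta_2,\delta_1,\delta_3,\dots,\delta_m)$ are $2$-equivalent --- for each pair of coordinates the required match is provided by the identity, by a suitable $3$-cycle, or by a double transposition lying in $\Alt(\Delta)$, which is exactly where $|\Delta|\geq 4$ is used --- while no element of $G$ carries the first tuple to the second, since such an element would fix $\Delta$ setwise and induce the transposition $(\delta_1\,\delta_2)$ on it. (One checks that none of the binary groups listed in Conjecture~\ref{conj: cherlin original} has a beautiful subset.) It thus suffices to produce a beautiful subset whenever $G$ is a primitive almost simple group in one of the two families and $G\neq\symme(\Omega)$ in the natural action.

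\medskip
\noindent For $G$ with socle $\Alt(n)$, $n\geq 5$, one has $G\in\{\Alt(n),\symme(n)\}$ (up to the $n=6$ caveat), and the point stabiliser $M$ is maximal. By the Liebeck--Praeger--Saxl (O'Nan--Scott) description of the maximal subgroups of $\symme(n)$ and $\Alt(n)$, either $M$ is intransitive and $\Omega$ is a set of $k$-subsets, or $M$ is transitive imprimitive and $\Omega$ is a set of uniform partitions, or $M$ is primitive on $\{1,\dots,n\}$ (of affine, diagonal, product, or almost simple type). In the primitive case $|M|$ is tiny beside $n!$, so $\Omega$ is enormous, and I would extract a beautiful subset by using the abundance of large alternating and symmetric subgroups of $G$: inside a suitable orbit of such a subgroup one finds a configuration $\Delta$ whose setwise stabiliser is forced to induce the full group $\Alt(\Delta)$. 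The intransitive and imprimitive cases are the resistant ones: the internal symmetry of a $k$-subset, or of a partition, tends to make every permutation of any candidate $\Delta$ realisable inside $G$, so beautiful subsets are scarce there; these actions --- the ones nearest to the genuinely binary $\symme(\Omega)$ that Conjecture~\ref{conj: cherlin original} tells us to \emph{exclude} --- I would instead handle directly, by analysing the $2$-equivalence classes of tuples and exhibiting explicit non-binary witnesses.

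\medskip
\noindent For a classical group $G$ in a $\mathcal{C}_1$-action, the point stabiliser is the stabiliser of a subspace $U$ of $V=\Fq^d$ --- totally singular, non-degenerate, or, for $\SL_d(q)$, the stabiliser of a direct-sum decomposition $V=U\oplus W$ --- and $\Omega$ is the corresponding orbit of subspaces. Here I would build beautiful subsets out of frames: a family of subspaces forming a projective frame has setwise stabiliser inducing the \emph{full} symmetric group on it, and the task is to trim this down to the alternating group by invoking the constraints imposed on the realisable permutations of $\Delta$ by the form on $V$, the determinant modulo $d$-th powers, the spinor norm and discriminant (in the orthogonal cases), and any field automorphisms present in $G$. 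For example, with $G=\PSL_d(q)$ acting on the points of $\mathrm{PG}(V)$, a projective frame of $d+1$ points ($d\geq 3$) is a beautiful subset precisely when $-1$ is not a $d$-th power in $\Fq$ (a transposition of two of its points being realisable in $G$ exactly when $-1$ is a $d$-th power); when this fails, or when $G$ is larger, or for the other classical types, one replaces the frame by a slightly larger or suitably weighted configuration and repeats the parity count form-by-form. The finitely many small cases --- small $d$, small $q$, small rank of a totally singular subspace --- are then settled by direct computation.

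\medskip
\noindent The main obstacle, in both halves, will be exactly these ``standard'' actions --- $\symme(n)$ and $\Alt(n)$ on $k$-subsets and on uniform partitions, and the classical groups on their subspace geometries. They are the primitive actions sitting closest to the binary examples of Conjecture~\ref{conj: cherlin original}, hence the least hospitable to an off-the-shelf beautiful subset, so one must either design bespoke non-binary witnesses or identify exactly the right weighted configuration and then push through the parity, spinor-norm and discriminant bookkeeping, all the while tracking a lengthening list of genuinely exceptional small-parameter cases. (A complete proof of Conjecture~\ref{conj: cherlin} would additionally need beautiful subsets, or some replacement, for the remaining Aschbacher classes of the classical groups and for the exceptional and sporadic socles; those cases lie outside the present method and remain open.)
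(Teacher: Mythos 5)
First, a point of scope: the statement you are proving is a \emph{conjecture}, and the paper does not prove it in full; it establishes only two families of cases (Theorems A and B), exactly the two families your sketch targets. So the comparison below is with the paper's proofs of those two theorems, and your closing admission that the remaining socles and Aschbacher classes are untouched is consistent with the paper.

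The central problem is your definition of a beautiful subset, which is not the paper's and is the wrong tool for the job. You require $G^\Delta=\Alt(\Delta)$ exactly. The resulting witness lemma is correct as stated (it is an instance of the paper's more general ``forbidden configuration'' idea: $\Alt(\Delta)$ contains $(\delta_1\,\delta_2)(\delta_3\,\delta_4)$ and the relevant $3$-cycles but not $(\delta_1\,\delta_2)$). But the paper's notion is essentially the opposite: $\Lambda$ is beautiful when $G^\Lambda$ is a $2$-transitive subgroup of $\symme(\Lambda)$ isomorphic to \emph{neither} $\Alt(\Lambda)$ \emph{nor} $\symme(\Lambda)$; then $2$-transitivity makes every pair of injective $|\Lambda|$-tuples $2$-subtuple complete, and binariness would force $G^\Lambda=\symme(\Lambda)$. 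This matters because what the constructions actually produce are small $2$-transitive groups --- $\PGL_3(2)$ on the seven lines of a Fano plane, $\PGL_2(5)$ on the six perfect matchings of the Petersen graph, Frobenius groups $U\rtimes T$ of order $20$ or $42$, $\AGL_2(4)$, Borel subgroups acting on $q$ points --- and none of these is an alternating group. Arranging $G^\Delta$ to equal $\Alt(\Delta)$ exactly is a delicate global condition that your sketch never verifies anywhere (your frame example already fails whenever $-1$ is a $d$-th power, or whenever $G$ exceeds $\PSL_d(q)$, and the ``suitably weighted configuration'' that is supposed to rescue it is never defined). With the paper's definition, by contrast, the $\mathcal{C}_1$ point action is dispatched in one line ($\Omega$ itself is beautiful except for $\SL_2(4)$), and the general subspace actions are handled by exhibiting a unipotent root group normalized by a Singer-type torus acting sharply $2$-transitively on an orbit $\Lambda$, then bounding $G^\Lambda$ inside some $\GL(W_2/W_1)$ of small dimension.

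You have also inverted where the difficulty lies in the alternating case. You expect the intransitive and imprimitive stabilizers to be resistant and the primitive ones to be easy ``because $\Omega$ is enormous.'' The paper finds beautiful subsets readily for $k$-subsets with $k\geq 3$ and for all uniform partitions (via the Fano-plane and Petersen-graph configurations); the genuinely resistant actions are the natural action, the action on $2$-subsets, and --- crucially --- certain \emph{primitive} point stabilizers, namely $\AGL_1(p)\cap\Alt(p)$ and maximal $M$ with socle $\PSL_2(q)$ for $q=2^f$ or $q\equiv 3,5\pmod 8$. For these no beautiful subset need exist, and the paper must instead produce explicit non-binary witnesses of length $3$ via a separate lemma ($g\in G_{\omega_0}\cap G_{\omega_2}G_{\omega_1}$ with $G_{\omega_0}\cap G_{\omega_1}=1$ and $g\notin G_{\omega_2}$), driven by fixed-point-ratio bounds of Liebeck--Saxl and Guralnick--Magaard together with Bertrand's postulate. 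Nothing in your sketch anticipates this, and ``using the abundance of large alternating subgroups'' will not supply it: those are precisely the cases where every candidate configuration closes up to $\symme(\Delta)$ or fails to exist. As it stands the proposal is a strategy outline whose key lemma points at the wrong class of configurations and whose hard cases are unaddressed.
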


Our first main result is a proof of Conjecture~\ref{conj: cherlin} for one of the main families of finite simple groups:

\begin{thma}
Let $G$ be a finite almost simple primitive group on $\Omega$ with socle a non-abelian alternating group. If $G$ is binary, then $G=\Sym(\Omega)$.
\end{thma}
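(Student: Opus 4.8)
The plan is to combine the classification of maximal subgroups of the symmetric and alternating groups with the beautiful-subset machinery: we reduce to a finite list of families of primitive actions, dispose of the natural action directly, and show that every other action fails to be binary by exhibiting a beautiful subset. Since $G$ is almost simple and primitive with socle $\Alt(n)$, its action on $\Omega$ is permutation-isomorphic to the action on the cosets of a maximal subgroup $H$ with $\Alt(n)\le G\le\Sym(n)$. By the O'Nan--Scott theorem, in the form of the Liebeck--Praeger--Saxl description of the maximal subgroups of $\Sym(n)$ and $\Alt(n)$, $H$ is, up to conjugacy, one of: (i) an intransitive subgroup $(\Sym(k)\times\Sym(n-k))\cap G$ stabilising a $k$-subset, $1\le k<n/2$; (ii) an imprimitive subgroup $(\Sym(a)\wr\Sym(b))\cap G$ stabilising a partition of $\{1,\dots,n\}$ into $b$ parts of size $a$, with $n=ab$ and $a,b\ge 2$; or (iii) a primitive maximal subgroup of $\Sym(n)$, which is a product-action group $(\Sym(a)\wr\Sym(b))\cap G$ with $n=a^b$, an affine group $\AGL_d(p)$ with $n=p^d$, a group of diagonal type, or an almost simple group acting primitively of degree $n$.

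If $H$ is intransitive with $k=1$, then $\Omega$ is the set of $n$ points, and either $G=\Sym(n)=\Sym(\Omega)$, which is the desired conclusion, or $G=\Alt(n)$. In the latter case one checks directly that $\Alt(n)$ in its natural action on $n\ge 5$ points is not binary: the tuples $(1,2,3,4,\dots,n)$ and $(2,1,3,4,\dots,n)$ are $2$-equivalent — for any pair of coordinate positions the required partial map is realised by a suitable $3$-cycle or double transposition, using $n\ge 4$ — but they are not $G$-equivalent, since a single $g\in\Alt(n)$ carrying one to the other would have to induce the transposition $(1\,2)$. As $\Alt(n)\ne\Sym(\Omega)$, this contradicts the hypothesis that $G$ is binary. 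So from now on we may assume the action is not the natural one, and it suffices to show it is not binary.

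For each remaining family we produce a beautiful subset $\Lambda\subseteq\Omega$ — a subset of size at least $3$ on which the setwise stabiliser $G_{\{\Lambda\}}$ induces a full symmetric or alternating group — and invoke the paper's criterion that, outside the natural symmetric action, the presence of such a subset forces non-binariness (whether by bootstrapping $\Lambda$ up to a beautiful subset of $\Omega$ and comparing orders, or by an explicit odd-permutation witness when only an alternating group is induced). Concretely: for the action on $k$-subsets with $2\le k<n/2$, take $\Lambda$ to be the set of $k$-subsets contained in a fixed $(k+1)$-subset $Y$, so $|\Lambda|=k+1\ge 3$ and $\stab_G(Y)$ induces $\Sym(k+1)$ on $\Lambda$ via complementation inside $Y$; for the imprimitive action, fix all but a bounded number of blocks and work inside the resulting sub-configuration, where the stabiliser induces a symmetric or alternating group on a small set of partitions; for the product action $n=a^b$, restrict to the sub-configuration obtained by fixing all coordinates but one, reducing to an action of $\Sym(a)$ in which a beautiful subset is visible; for the affine case, exploit the natural module and the translation subgroup; and for the diagonal case, use the smallness of the point stabiliser relative to $|\Omega|$ to find a bounded set whose setwise stabiliser induces at least a $3$-transitive group.

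The main obstacle is the almost simple type, in which $H$ is itself a primitive group of degree $n$ with non-abelian simple socle. Here there is no coordinate or incidence structure to exploit, and the degree $|\Omega|=|G:H|$ is not under such tight control, so one must work harder to locate a beautiful subset; I expect this case to require an inductive scheme on the primitive degree $n$ of $H$, together with bounds on base sizes or minimal degrees of primitive groups, and a finite check of the small degrees and of the sporadic possibilities for $H$. Even in the easier families, care is needed to verify that the induced group on $\Lambda$ is genuinely symmetric or alternating of degree at least $3$ rather than merely transitive, and a handful of small $n$ for which the maximal-subgroup structure is exceptional (and containments such as $\Alt(n)\cap H$ with small index) will have to be handled separately.
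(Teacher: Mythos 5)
Your plan rests on an inverted version of the beautiful-subset criterion, and the inversion is fatal. You define a beautiful subset as one on which the setwise stabiliser induces the \emph{full} symmetric or alternating group and claim this forces non-binariness; the actual criterion (Lemma~\ref{l: 2trans gen}) is the opposite: if $G^\Lambda$ is a $2$-transitive \emph{proper} subgroup of $\Sym(\Lambda)$, then $G$ is not binary, because $2$-subtuple completeness of all pairs of $|\Lambda|$-tuples of distinct elements would force $G^\Lambda=\Sym(\Lambda)$. If $G^\Lambda$ already equals $\Sym(\Lambda)$ (or contains $\Alt(\Lambda)$), no contradiction arises and the subset carries no information. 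Your construction for the action on $k$-subsets --- the $k$-subsets of a fixed $(k+1)$-set, on which $\Sym(k+1)$ is induced --- is precisely a subset that is \emph{not} beautiful. The correct construction for $2<k<n/2$ takes the seven lines of a Fano plane on $\{1,\dots,7\}$, each padded by a common tail, so that the induced group is $\PGL_3(2)$ in its $2$-transitive action on $7$ points, a proper subgroup of $\Sym(7)$.

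Second, even with the correct definition, the strategy ``a beautiful subset for every non-natural action'' cannot be carried through: the action on $2$-subsets provably admits \emph{no} beautiful subset (Lemma~\ref{l: M intrans 2}, by a graph-theoretic argument), and the same failure occurs for $\Alt(p)$ on cosets of $\AGL_1(p)\cap\Alt(p)$ for some primes $p$ (e.g.\ $p=13$) and for actions whose point stabiliser is primitive with socle $\PSL_2(q)$, $q=2^f$ or $q\equiv 3,5\pmod 8$. These cases need genuinely different arguments: for $2$-subsets, an explicit non-binary witness of length $n-1$; for the affine and $\PSL_2(q)$ cases, Lemma~\ref{l: auxiliary}, which produces a $2$-subtuple-complete but not $3$-subtuple-complete pair of triples from an element $g\in G_{\omega_0}\cap G_{\omega_2}G_{\omega_1}$ with $g\notin G_{\omega_2}$ and $G_{\omega_0}\cap G_{\omega_1}=1$ --- verified by $n$-cycle computations in the affine case and by a delicate fixed-point-ratio analysis (Liebeck--Saxl bounds, Bertrand's postulate) in the $\PSL_2(q)$ case. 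Your proposal contains no substitute for these steps, and the suggestion that induction on the degree of the point stabiliser plus base-size bounds will resolve the almost simple case does not engage with the real obstruction, namely that beautiful subsets simply do not exist there.
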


In the light of Theorem A, to prove Conjecture~\ref{conj: cherlin} we must deal with the sporadic simple groups and the finite simple groups of Lie type. Our second main result concerns the second of these two families:

\begin{thmb}
 Let $G$ be a finite almost simple classical group, and let $M$ be a maximal subgroup of $G$ that lies in the Aschbacher family $\mathcal{C}_1$. Let $\Omega$ be the set of (right) cosets of $M$. If the action of $G$ on $\Omega$ is binary, then $G\cong \PGL_2(4)\cong \symme(5)$, $|\Omega|=5$ and $M\cong \symme(4)$.
\end{thmb}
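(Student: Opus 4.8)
\section*{Proof proposal}

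The plan is to combine three ingredients: the beautiful-subset machinery of the preceding sections, an elementary argument for the $2$-transitive actions, and a finite case analysis in small rank and over small fields. Throughout, $M\in\C_1$ is the stabiliser in $G$ of a subspace $W$ of the natural module $V$ — totally singular or isotropic, or non-degenerate, or (orthogonal case, $q$ odd) a non-singular $1$-space — together with, when $G$ contains a graph automorphism, the stabiliser of an incident or complementary pair of subspaces; passing to $W^{\perp}$ or to annihilators where appropriate, one may assume $\dim W\le\tfrac12\dim V$. I would organise the argument by the shape of the resulting action of $G$ on $\Omega$.

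For the actions that are not $2$-transitive — essentially all of them once $\dim W>1$ — the aim is to exhibit in $\Omega$ a beautiful subset large enough for the non-binarity criterion of the earlier sections to apply (the only obstruction to that criterion being $\Alt(\Omega)\le G$, which for a $\C_1$-action of an almost simple classical group forces $|\Omega|=5$). The construction: write $V=U_1\perp\dots\perp U_t\perp U_0$ as an orthogonal, unitary or symplectic direct sum (a plain direct sum in the linear case) with each $U_i$ ($1\le i\le t$) carrying a subspace $W_i$ of the same isometry type as $W$, the $U_i$ pairwise isometric, and $U_0$ a (possibly trivial) complement of compatible type; then $\{W_1,\dots,W_t\}\subseteq\Omega$, and adjoining the ``diagonal'' subspace $W_{t+1}=\{v^{(1)}+\dots+v^{(t)}:v\in W_1\}$ (matching the $W_i$ by fixed isometries) one checks that the stabiliser in $G$ of $\{W_1,\dots,W_{t+1}\}$ induces $\Sym(t+1)$ on it — a beautiful subset of size $t+1\approx\lfloor\dim V/\dim W\rfloor+1$, or $\lfloor(\text{Witt index})/\dim W\rfloor+1$ when $W$ is totally singular or isotropic. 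When $\dim W$ is small this is already large; when $\dim W$ is near the middle, so that $t$ is only $2$ or $3$, I would enlarge the configuration with further diagonal copies of $W$ (graphs of isometries with pairwise invertible differences), whose number grows with $q$. A short determinant / spinor-norm computation is then needed to guarantee that the permutations realising $\Sym(t+1)$ can be chosen inside $G$ itself; this one arranges, if necessary, by composing with an element supported on $U_0$ or on a single $U_i$, compatibly with any graph or field automorphism defining $G$.

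The $2$-transitive $\C_1$-actions — the actions on projective points or hyperplanes, the action of $\PSU_3(q)$ on the Hermitian unital, and $\PSL_2(q)$ on the $q+1$ points of the projective line (in all its guises, e.g.\ $\POmega_3(q)$ on a conic) together with its almost simple overgroups — require no beautiful subset but an elementary observation: since $G$ is $2$-transitive, fixing a three-point set $S$ whose pointwise stabiliser $G_{(S)}$ is not transitive on the remaining $|\Omega|-3$ points (automatic once $G_{(S)}$ is small and $|\Omega|\ge 5$), the two $4$-tuples formed by appending to $S$ two points lying in distinct $G_{(S)}$-orbits are $2$-equivalent yet lie in different $G$-orbits, so $G$ is not binary. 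This argument fails only when $G_{(S)}$ is transitive on $|\Omega|-3$ points, which in the $\PSL_2$ family happens exactly for $\PGammaL_2(4)\cong\symme(5)$ acting on $5$ points — the natural action of a symmetric group, hence binary, with point stabiliser $\symme(4)$. The residual configurations, where neither step quite reaches, form an explicit finite list — $\PSL_d(q)$ and $\PSU_d(q)$ for $d\le 4$, $\mathrm{PSp}_4(q)$, the small-rank orthogonal groups, all over small $q$, and the field-automorphic extensions of these — and are settled one by one, many of them via Theorem~A through coincidences such as $\PSL_2(4)\cong\alter(5)$, $\PSL_2(9)\cong\alter(6)$, $\mathrm{PSp}_4(2)\cong\symme(6)$, $\PSL_4(2)\cong\alter(8)$. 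The only action surviving every test is $\symme(5)\cong\PGammaL_2(4)$ on $5$ points.

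The main obstacle, I expect, is twofold. First, the ``middle-dimension'' cases — Lagrangian subspaces in $\mathrm{PSp}_{2m}(q)$, maximal totally singular subspaces in the orthogonal groups, $(d/2)$-dimensional subspaces in $\SL_d(q)$, and their immediate neighbours — are precisely where the direct-sum part of the construction gives only $t=2$, forcing one onto the diagonal configurations, whose feasibility is entangled with $q$, the rank, the isometry type and (orthogonal case) the spinor norm; making sure simultaneously that such a configuration reaches the size demanded by the beautiful-subset criterion and that its symmetries sit inside $G$, across all four families of classical groups and in the presence of graph and field automorphisms, is the real technical burden of the proof. Second, the small-rank/small-field residue is sizeable, and one must both dispose of each entry on its own terms and be certain that exactly one of them escapes — which calls for care with the dense cluster of coincidences among small classical groups (with alternating groups, with one another, and with $\symme(5)$) that occur in this range.
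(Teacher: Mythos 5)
There is a fatal gap at the heart of your construction for the non-$2$-transitive actions. You build a subset $\{W_1,\dots,W_{t+1}\}$ of $\Omega$ on which the set-wise stabiliser induces $\Sym(t+1)$, and you call this a beautiful subset. By definition, however, a beautiful subset $\Lambda$ is one on which $G^\Lambda$ is $2$-transitive and is isomorphic to \emph{neither} $\Alt(\Lambda)$ \emph{nor} $\Sym(\Lambda)$; the non-binarity criterion (Lemma~\ref{l: 2trans gen}) works precisely because $2$-transitivity makes every pair of $|\Lambda|$-tuples of distinct points of $\Lambda$ $2$-subtuple complete, so binarity would force $G^\Lambda=\Sym(\Lambda)$ --- one therefore needs $G^\Lambda$ to be a \emph{proper} $2$-transitive subgroup of $\Sym(\Lambda)$ to reach a contradiction. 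A subset on which the full symmetric group is induced yields no obstruction at all: $\Sym(n)$ in its natural (binary) action induces $\Sym(\Lambda)$ on every subset. Your parenthetical claim that ``the only obstruction to that criterion is $\Alt(\Omega)\le G$'' inverts the logic: the condition is on the group induced on the subset $\Lambda$, not on $G$ as a subgroup of $\Sym(\Omega)$. Consequently the direct-sum-plus-diagonal configurations, even granting all the determinant and spinor-norm bookkeeping, prove nothing about binarity, and the bulk of the theorem is not established by your argument.

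For contrast, the paper's constructions have the opposite aim: one finds a Frobenius subgroup $H=U\rtimes T$ (a unipotent group $U$ normalised by a torus $T$ acting transitively on $U\setminus\{1\}$, often a Singer-type torus supplied by Lemma~\ref{l: singers}), so that $H$ acts $2$-transitively on the orbit $\Lambda=\omega_0^H$ of size $|U|=q^d$; one then rules out $G^\Lambda\supseteq\Alt(\Lambda)$ by showing that $G_\Lambda$ stabilises subspaces $W_1<W_2$ canonically attached to $\Lambda$ and that the action on $\Lambda$ factors through a subgroup of $\GL(W_2/W_1)$ of small dimension, which has no quotient isomorphic to $\Alt(q^d)$. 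Your three-point-set argument for the genuinely $2$-transitive actions is essentially the standard proof of Lemma~\ref{l: 2trans} and is fine, and your expectation of a finite residue of small cases to be checked individually matches the paper; but without a mechanism that produces \emph{proper} $2$-transitive induced actions on subsets, the proposal does not prove Theorem~B.
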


Note that we use the description of the Aschbacher family $\mathcal{C}_1$ given by Kleidman and Liebeck~\cite{kl}, rather than that used in the original paper \cite{aschbacher2}. In particular, this theorem excludes those almost simple groups $G$ which have socle $\POmega_8^+(q)$ and include a triality automorphism, and those which have socle $\Sp_4(2^f)$ and include a graph automorphism.

Both Theorems A and B are relatively easy consequences of results stated later in the paper (Proposition~\ref{p: alt} and Table~\ref{t: c1 sln} in \S\ref{s: classical}) concerning the existence of something we call a {\em beautiful subset}. To define it note that, for $\Lambda$ a subset of $\Omega$, we write $G_\Lambda$ for the \emph{set-wise} stabilizer of $\Lambda$, $G_{(\Lambda)}$ for the \emph{point-wise} stabilizer of $\Lambda$ and $G^\Lambda$ for the permutation group induced by $G_\Lambda$ on $\Lambda$. 

We will say that a subset $\Lambda\subseteq \Omega$ is a \emph{$G$-beautiful subset} if $G^\Lambda$ is a 2-transitive subgroup of $\symme(\Lambda)$ that is isomorphic to neither $\alter(\Lambda)$ nor $\symme(\Lambda)$. If the group $G$ is clear from the context, we will speak of a \emph{beautiful subset} rather than a $G$-beautiful subset of $\Omega$. Observe that any beautiful subset of $\Omega$ must have cardinality at least $5$.

Now the point is that, if $\Omega$ contains a $G$-beautiful subset, then the action of $G$ on $\Omega$ is not binary; indeed, one can say more: if $G$ is almost simple with socle $S$ and if $\Omega$ contains an $S$-beautiful subset, then the action of $G$ on $\Omega$ is not binary (see Lemma~\ref{l: 2trans gen}). 

The results from which we deduce Theorems A and B all assert that, for the actions under consideration, beautiful subsets are present except in very special circumstances; what is more these special circumstances are completely classified. For instance, the results in \S\ref{s: classical} have the following immediate corollary:

\begin{cor}
 Let $G$ be a finite  almost simple classical group with socle $S$, and let $M$ be a maximal subgroup of $G$ that lies in the Aschbacher family $\mathcal{C}_1$. Let $\Omega$ be the set of (right) cosets of $M$. Then, with finitely many exceptions, $\Omega$ contains an $S$-beautiful subset.
\end{cor}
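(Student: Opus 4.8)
The plan is to run through Kleidman and Liebeck's description of the $\mathcal{C}_1$-members of a finite almost simple classical group and, in each case, to invoke the corresponding result of \S\ref{s: classical}. Write $V$ for the natural module of $G$, a vector space over $\bF_q$ (or over $\bF_{q^2}$ when $G$ is unitary), of dimension $n$. Up to the conjugacy and duality conventions of \cite{kl}, a maximal subgroup $M$ of $G$ in the Aschbacher class $\mathcal{C}_1$ is the stabilizer in $G$ of a subspace $U\le V$ which is arbitrary when $G$ is linear, totally singular or nondegenerate when $G$ is symplectic, unitary or orthogonal, and which may in addition be a nonsingular $1$-space when $G$ is orthogonal of even characteristic; when $G$ contains a graph automorphism one must also allow stabilizers of an appropriate pair $\{U,U'\}$ of complementary subspaces. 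Consequently $\Omega$ is permutation isomorphic to the $G$-orbit of $U$ among the subspaces of $V$ of the prescribed type and dimension $k$. This partitions the problem into finitely many \emph{families}, indexed by the isomorphism type of $G$ and the type of $U$, within each of which the remaining parameters are $n$, $q$ and $k$.

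For each such family, \S\ref{s: classical} --- concretely, Table~\ref{t: c1 sln} for the linear groups and the analogous propositions for the symplectic, unitary and orthogonal groups --- produces an $S$-beautiful subset $\Lambda\subseteq\Omega$ for every value of $(n,q,k)$ lying outside an explicitly determined finite set. In each case $\Lambda$ is obtained \emph{locally}: one fixes inside $V$ a configuration of subspaces whose span has dimension bounded independently of $n$ and of $k$, chosen so that the setwise stabilizer of the resulting set $\Lambda$ of subspaces of the given type induces on it a group that contains a small classical group in one of its familiar $2$-transitive actions --- for instance $\PSL_d(q)$ or $\PGL_d(q)$ on the points of $\mathrm{PG}(d-1,q)$, $\PSU_3(q)$ on the $q^3+1$ points of a Hermitian unital, or $\Sp_{2m}(2)$ on one of its two orbits of nondegenerate quadratic forms. (For example, when $M=P_k$ in the linear case one may take a fixed $(k-1)$-subspace $U_0$ and a $2$-dimensional subspace $W$ with $U_0\cap W=0$, and let $\Lambda=\{\,U_0\oplus\langle w\rangle : \langle w\rangle \text{ a point of } W\,\}$, a set of size $q+1$ on which $G^\Lambda\ge\PGL_2(q)$ acts $3$-transitively.) Each of the $2$-transitive groups arising this way is isomorphic to neither $\Alt(\Lambda)$ nor $\Sym(\Lambda)$ as soon as its defining parameters are not too small, so within each family the construction fails only for boundedly many triples $(n,q,k)$.

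Taking the union of these finite exceptional sets over the finitely many families gives a finite set of pairs $(G,M)$ for which the argument produces no $S$-beautiful subset, and this proves the corollary. The genuine content --- and what I expect to be the main obstacle --- sits in \S\ref{s: classical} rather than in this bookkeeping: one has to design the local configurations so that the induced group has the right shape in \emph{every} case, and, above all, one has to determine exactly which $(G,M)$ are true exceptions. These cluster among the low-dimensional classical groups, where one must contend with the exceptional isomorphisms with alternating and small linear groups (such as $\PSL_2(4)\cong\PSL_2(5)\cong\Alt(5)$, $\Sp_4(2)\cong\Sym(6)$, $\PSU_4(2)\cong\mathrm{PSp}_4(3)$, and the coincidences involving $\POmega_6^\pm(q)$) and with the fact that when $\Omega$ is itself small there is simply not enough room for a subset of size at least $5$ carrying a $2$-transitive group other than $\Alt$ or $\Sym$ --- precisely the situations recorded as exceptions in \S\ref{s: classical}, in agreement with Theorem B.
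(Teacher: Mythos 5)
Your proposal is correct and follows essentially the same route as the paper: the corollary is read off from the case analysis of \S\ref{s: classical} (Propositions~\ref{prop1}--\ref{prop4} and Table~\ref{t: c1 sln}), where for each Kleidman--Liebeck type of $\mathcal{C}_1$-stabilizer a beautiful subset is built from a bounded-dimensional local configuration of subspaces inducing a small $2$-transitive classical action, with the exceptions confined to an explicit finite list of small groups. Your sample construction for the linear parabolic case and your identification of where the real work lies (designing the local configurations and pinning down the exact exceptions among low-dimensional groups with exceptional isomorphisms) match the paper's method.
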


The ``finitely many exceptions'' are all explicitly listed in Table~\ref{t: c1 sln}.

\subsection{Implications, and generalizations}

As well as representing a material advance towards a proof of Conjecture~\ref{conj: cherlin}, Theorems A and B also demonstrate the efficacy of studying beautiful subsets for primitive actions. 

Note, first of all, that this method will undoubtedly allow further cases of Conjecture~\ref{conj: cherlin} to be proved. In \S\ref{s: classical method}, we give further details concerning how this method can be used for the ``algebraic actions'' of the finite classical and exceptional almost simple groups. A paper concerning these actions is in preparation \cite{gls_binary}.

A slight variant of this method can also be applied to the study of the sporadic simple groups. Indeed, it is possible to prove a relatively strong upper bound on the number of orbits of a binary primitive group in its action on $3$-tuples in terms of its rank. This bound, combined with information from~\cite{ATLAS}, allows us to show that most primitive actions of sporadic simple groups are not binary. For the remaining actions, we may then use detailed information on the subgroup structure and apply the method of beautiful subsets.

The notion of a beautiful subset is a particular example of something we call a \textit{forbidden configuration}. This more general concept can be used to prove cases of Cherlin's conjecture where beautiful subsets do not exist \cite{ghs_binary}; it also has implications for the study of actions of arities other than $2$. A discussion of this idea can be found below in \S\ref{s: forbidden}; in particular, we give details of a relatively straightforward way to strengthen Theorem~B.

It is too soon to say whether or not a complete proof of Conjecture~\ref{conj: cherlin} will be obtained using the approach of beautiful subsets and forbidden configurations, although this is our ultimate goal.

\subsection{Structure and acknowledgments}

In \S\ref{s: background}, we set up the terminology that will be used throughout the paper, and we prove a number of basic lemmas. We also give some perspective on our approach to the study of binary groups, with a view to applying the basic ideas in later papers. 

In \S\ref{s: alternating}, we give a full classification of those primitive actions of an alternating group that admit a beautiful subset (Proposition~\ref{p: alt}), and we use this result to prove Theorem~A. In \S\ref{s: classical}, we do similarly for the actions of classical groups on the cosets of $\mathcal{C}_1$-maximal subgroups.

It is a pleasure to thank Martin Liebeck for introducing us to Cherlin's conjecture and for a number of enlightening conversations. Thanks are also due to Francis Hunt and Josh Wiscons for useful discussions.

\section{Background lemmas}\label{s: background}

In this section, we establish some basic definitions and lemmas that will be used throughout the paper. Note that in this section, $G$ is a subgroup of $\symme(\Omega)$, where $\Omega$ is a finite set. 

The group $G$ is called \textit{$r$-subtuple complete} with respect to a
pair of $n$-tuples $I, J \in \Omega^n$, if it contains elements that
map every subtuple of size $r$ in $I$ to the corresponding subtuple in
$J$ i.e. $\forall k_1, k_2, \dots, k_r$ integers with $1\leq k_1, k_2,
\dots, k_r\leq n \; \exists h \in G$ with $I_{k_i}^h=J_{k_i}$ for $i =
1, \dots, r$. Here $I_k$ denotes the $k^{\text{th}}$ element of tuple
$I$ and $I^g$ denotes the image of $I$ under the action of $g$.
Note that $n$-subtuple completeness simply requires the existence of
an element of $G$ mapping $I$ to $J$.

The group $G$ is said to be of {\it arity $r$} if, for all
$n$-tuples $I, J \in \Omega^n$, with $n\in\Zplus$ and $n\geq r$, $r$-subtuple
completeness implies $n$-subtuple completeness. When $G$ has arity 2, we say that $G$ is {\it binary}.\footnote{We briefly mention an alternative approach to the notion of binari-ness, that derives from ideas found in~\cite{babai}: a
subset $\Sigma \subseteq \Omega$ is said to {\it split the pair} $\{\alpha,\beta\}$
(with $\alpha,\beta\in\Omega$) if $\alpha$ and $\beta$ belong to 
distinct $G_\Sigma$-orbits. It is easy to see that  $G$ is binary if and only if, for every $n\in\mathbb{Z}^+$, for every $\Sigma=\{\alpha_1,\dots,\alpha_{n}\}\subseteq \Omega$ and for every $\gamma,\delta\in \Omega$, the set $\Sigma$ splits  $\{\gamma,\delta\}$ only when, for each $i$, $\{\alpha_i\}$ does too.}
A pair of $n$-tuples of $\Omega$ is called a {\it non-binary witness of size $n$} if it is $2$-subtuple complete, but not $n$-subtuple complete.

The first lemma is well-known, and is an easy exercise. It is included as motivation for Lemma~\ref{l: 2trans gen}.

\begin{lem}\label{l: 2trans}
If $G$ is binary and $2$-transitive, then $G=\symme(\Omega)$.
 \end{lem}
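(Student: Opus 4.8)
The plan is to prove the contrapositive: assuming $G \neq \symme(\Omega)$ while $G$ is $2$-transitive, I will construct a non-binary witness of size $n$ for some $n$, thereby showing $G$ is not binary. Since $G$ is $2$-transitive on $\Omega$ but is a proper subgroup of $\symme(\Omega)$, the set $\Omega$ itself plays the role of a ``beautiful-like'' set here — more precisely, the whole point is that $G$ acts $2$-transitively but not as the full symmetric (or alternating) group, so there is some permutation of $\Omega$ that is not realized by $G$.

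First I would set $n = |\Omega|$ and enumerate $\Omega = \{\omega_1, \dots, \omega_n\}$. Let $I = (\omega_1, \omega_2, \dots, \omega_n)$ be the tuple listing all points of $\Omega$ in this order. Since $G \neq \symme(\Omega)$, there exists a permutation $\sigma \in \symme(\Omega) \setminus G$; set $J = (\omega_1^\sigma, \omega_2^\sigma, \dots, \omega_n^\sigma)$, which is also an enumeration of all of $\Omega$. Then $(I, J)$ is not $n$-subtuple complete: an element $h \in G$ with $I_k^h = J_k$ for all $k$ would satisfy $\omega_k^h = \omega_k^\sigma$ for every $k$, forcing $h = \sigma \notin G$, a contradiction. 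It remains to check $2$-subtuple completeness: given any two indices $k_1, k_2$, I need $h \in G$ with $\omega_{k_1}^h = \omega_{k_1}^\sigma$ and $\omega_{k_2}^h = \omega_{k_2}^\sigma$. Since $I$ and $J$ are both injective tuples (they list distinct points), both $(\omega_{k_1}, \omega_{k_2})$ and $(\omega_{k_1}^\sigma, \omega_{k_2}^\sigma)$ are pairs of distinct points of $\Omega$, and $2$-transitivity of $G$ supplies exactly such an $h$. (If $k_1 = k_2$, $2$-transitivity a fortiori gives a suitable $h$, since $2$-transitive implies transitive.) Hence $(I,J)$ is a non-binary witness of size $n$, and $G$ is not binary.

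There is no serious obstacle here; the only point requiring a moment's care is the verification of $2$-subtuple completeness in the degenerate case where the two chosen indices coincide, which reduces to transitivity. I would also remark that this argument is precisely the $\alter(\Lambda)$-or-$\symme(\Lambda)$ threshold appearing in the definition of a beautiful subset: if one only knew that $G^\Omega$ contained $\alter(\Omega)$, the tuple $J$ obtained from an even permutation $\sigma$ would again be realizable, so one genuinely needs $G$ to miss some permutation, which here is guaranteed by $G \neq \symme(\Omega)$ (the case $G = \alter(\Omega)$ being handled by taking $\sigma$ an odd permutation, still outside $G$). This is why the lemma is the natural motivating special case of Lemma~\ref{l: 2trans gen}.
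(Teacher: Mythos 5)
Your proof is correct, and it is essentially the argument the paper has in mind: the paper leaves this lemma as an "easy exercise" but proves the generalization (Lemma~\ref{l: 2trans gen}) by exactly the same device — $2$-transitivity makes every pair of injective enumerations $2$-subtuple complete, so binariness would force $G$ to realize every permutation of $\Omega$. Your handling of the degenerate case $k_1=k_2$ and of the $\Alt(\Omega)$ threshold is a nice touch but not needed beyond what you wrote.
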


Recall that we write $G^{\Delta}$ for the permutation group induced by the set-wise stabilizer $G_{\Delta}$ on $\Delta$: clearly, $G^{\Delta}\cong G_{\Delta}/G_{(\Delta)}$.
 
\begin{lem}\label{l: 2trans gen}
 Suppose that there exists a subset $\Lambda \subseteq \Omega$ such that $|\Lambda|\geq 2$ and $G^\Lambda$ is a $2$-transitive proper subgroup of $\symme(\Lambda)$. Then $G$ is not binary.
\end{lem}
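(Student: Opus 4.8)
The plan is to produce an explicit non-binary witness of size $|\Lambda|$, built from an enumeration of all the points of $\Lambda$ together with a permutation that $G$ fails to induce. Write $m=|\Lambda|$ and fix an ordering $\Lambda=\{\lambda_1,\dots,\lambda_m\}$ (note that in fact $m\geq 3$, since $\symme(\Lambda)$ has a proper $2$-transitive subgroup only when $m\geq 3$). Since $G^\Lambda$ is a \emph{proper} subgroup of $\symme(\Lambda)$, I would pick a permutation $\sigma\in\symme(\Lambda)\setminus G^\Lambda$ and consider the two $m$-tuples of elements of $\Omega$ given by
\[
I=(\lambda_1,\dots,\lambda_m), \qquad J=(\lambda_1^{\sigma},\dots,\lambda_m^{\sigma}).
\]
Because $\sigma$ permutes $\Lambda$, the set of entries of $J$ is again exactly $\Lambda$; this is the observation that will force any element of $G$ carrying $I$ to $J$ to stabilise $\Lambda$ set-wise.

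The argument then splits into two routine checks. First, $(I,J)$ is $2$-subtuple complete: given indices $k_1,k_2$, if $k_1=k_2$ one uses transitivity of $G^\Lambda$ (a consequence of $2$-transitivity, as $m\geq 2$) to find $g\in G_\Lambda$ with $\lambda_{k_1}^{g}=\lambda_{k_1}^{\sigma}$; if $k_1\neq k_2$ then $\lambda_{k_1}\neq\lambda_{k_2}$, hence $\lambda_{k_1}^{\sigma}\neq\lambda_{k_2}^{\sigma}$ since $\sigma$ is a bijection, so $2$-transitivity of $G^\Lambda$ yields $g\in G_\Lambda$ with $\lambda_{k_i}^{g}=\lambda_{k_i}^{\sigma}$ for $i=1,2$; in either case the element $g\in G$ is what is required. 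Second, $(I,J)$ is not $m$-subtuple complete: if some $h\in G$ satisfied $\lambda_i^{h}=\lambda_i^{\sigma}$ for all $i$, then $h$ would carry the entries of $I$ onto those of $J$, i.e. $\Lambda$ onto $\Lambda$, so $h\in G_\Lambda$; but then the permutation induced by $h$ on $\Lambda$ agrees with $\sigma$ on every $\lambda_i$ and hence equals $\sigma$, contradicting $\sigma\notin G^\Lambda$. Therefore $(I,J)$ is a non-binary witness of size $m\geq 2$, and $G$ is not binary.

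I do not anticipate a genuine obstacle: the statement is essentially a direct unwinding of the definitions of binariness and of $G^\Lambda$, and the only point demanding a little care is that the definition of $2$-subtuple completeness quantifies over \emph{all} pairs of indices $k_1,k_2$, including equal ones, so that transitivity (and not merely $2$-transitivity) is needed to dispatch the diagonal case. Everything else is bookkeeping with tuples and with set-wise versus point-wise stabilisers.
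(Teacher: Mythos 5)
Your proof is correct and follows essentially the same route as the paper's: the paper observes that $2$-transitivity of $G^\Lambda$ makes every pair of $|\Lambda|$-tuples of distinct elements of $\Lambda$ $2$-subtuple complete, so binariness would force $G^\Lambda=\symme(\Lambda)$; you simply make this explicit by exhibiting one such pair $(I,J)$ built from a permutation $\sigma\notin G^\Lambda$. Your extra care with the diagonal case $k_1=k_2$ is a harmless refinement of the same argument.
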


Note that the condition $|\Lambda|\geq 2$ is included so that the phrase ``2-transitive'' makes sense. Of course, $\symme(\Lambda)$ can only contain a $2$-transitive proper subgroup  when $|\Lambda|\geq 4$.

\begin{proof}
Let $|\Lambda|=k$ and observe that, by supposition, any pair of $k$-tuples of distinct elements from $\Lambda$ is 2-subtuple complete. If $G$ were binary, this would imply that $G_\Lambda$ induces every permutation of $\symme(\Lambda$), i.e. $G^\Lambda=\symme(\Lambda)$, which is a contradiction.
\end{proof}

The next corollary is a slight strengthening of Lemma~\ref{l: 2trans gen} expressed in the language of beautiful subsets, the definition of which was given in \S\ref{s: intro}.

\begin{cor}\label{c: 2trans gen}
 Suppose that $G$ is almost simple with socle $S$. If $\Omega$ contains an $S$-beautiful subset, then $G$ is not binary.
\end{cor}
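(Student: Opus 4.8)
The plan is to reduce the statement to Lemma~\ref{l: 2trans gen} by exploiting the normality of the socle. Let $\Lambda\subseteq\Omega$ be an $S$-beautiful subset, so that $S^\Lambda$ is a $2$-transitive subgroup of $\symme(\Lambda)$ isomorphic to neither $\alter(\Lambda)$ nor $\symme(\Lambda)$; in particular $|\Lambda|\geq 5$, as noted in \S\ref{s: intro}. Since $S\leq G$, the set $\Lambda$ is of course also a subset on which $G^\Lambda$ is $2$-transitive (because $G^\Lambda\supseteq S^\Lambda$), but a priori $G^\Lambda$ could be all of $\symme(\Lambda)$, so Lemma~\ref{l: 2trans gen} does not apply immediately. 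The crux is therefore to show that $G^\Lambda$ is a \emph{proper} subgroup of $\symme(\Lambda)$.

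First I would observe that $S^\Lambda\trianglelefteq G^\Lambda$. Indeed, $S_\Lambda=S\cap G_\Lambda$ is normalized by $G_\Lambda$: every element of $G_\Lambda$ normalizes $S$ (as $S\trianglelefteq G$) and stabilizes $\Lambda$ setwise, hence normalizes $G_\Lambda$ as well, so it normalizes the intersection $S\cap G_\Lambda$. Applying the restriction homomorphism $G_\Lambda\to\symme(\Lambda)$, which sends $G_\Lambda$ onto $G^\Lambda$ and $S_\Lambda$ onto $S^\Lambda$, preserves this normality, giving $S^\Lambda\trianglelefteq G^\Lambda$. Next I would rule out the possibility $G^\Lambda=\symme(\Lambda)$: were this the case, $S^\Lambda$ would be a normal subgroup of $\symme(\Lambda)$ which is $2$-transitive (hence nontrivial), and since $|\Lambda|\geq 5$ the only such subgroups of $\symme(\Lambda)$ are $\alter(\Lambda)$ and $\symme(\Lambda)$ — contradicting the choice of $\Lambda$ as an $S$-beautiful subset.

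Hence $G^\Lambda$ is a proper subgroup of $\symme(\Lambda)$, and it is $2$-transitive since it contains $S^\Lambda$; also $|\Lambda|\geq 5\geq 2$. Lemma~\ref{l: 2trans gen} then yields directly that $G$ is not binary, which is what we want. The argument is entirely formal and presents no real obstacle; the only point that needs a moment's care is that the normal-subgroup structure of $\symme(\Lambda)$ is invoked only for $|\Lambda|\geq 5$, which is automatic because any beautiful subset has cardinality at least $5$ by definition.
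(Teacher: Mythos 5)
Your proof is correct and follows essentially the same route as the paper: establish $S^\Lambda\unlhd G^\Lambda$ from the normality of $S$ in $G$, use $|\Lambda|\geq 5$ and the fact that $\Alt(\Lambda)\nleq S^\Lambda$ to conclude that $G^\Lambda$ is a $2$-transitive proper subgroup of $\symme(\Lambda)$, and then apply Lemma~\ref{l: 2trans gen}. No issues.
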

\begin{proof}
 Let $\Lambda$ be an $S$-beautiful subset of $\Omega$ and observe that $\Lambda$ has order at least $5$. Since $S$ is normal in $G$, the group $(S_\Lambda G_{(\Lambda)})/G_{(\Lambda)}$ is a normal subgroup of $G_\Lambda/G_{(\Lambda)}$, that is, $S^\Lambda\unlhd G^\Lambda$. Since $\Lambda$ is $S$-beautiful, we get $\Alt(\Lambda)\nleq S^\Lambda$, and hence $G^\Lambda$ is a $2$-transitive proper subgroup of $\symme(\Lambda)$. Then Lemma~\ref{l: 2trans gen} implies that $G$ is not binary.
\end{proof}

The next lemma will be useful when we search for beautiful subsets. (Recall that $G$ is a subgroup of $\symme(\Omega)$.)

\begin{lem}\label{l: 2t}
Let $K$ be a group endowed with a  $2$-transitive action, and let $K_0$ be a point-stabilizer in $K$.
Let $H$ be a subgroup of $G$ and suppose that $\varphi:H\to K$ is an isomorphism. Let $M$ be the stabilizer in $G$ of a point $\omega\in\Omega$ and suppose that $\varphi(H\cap M)=K_0$. Then $H$ acts $2$-transitively on the orbit $\omega^H$.
\end{lem}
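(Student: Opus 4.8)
The plan is to exploit the standard bijection between the orbit $\omega^H$ and the coset space $H/(H\cap M)$, and then transport the $2$-transitivity of $K$ on $K/K_0$ across the isomorphism $\varphi$. First I would fix notation: write $M_0 := H\cap M$ for the stabilizer in $H$ of the point $\omega$, so that $M_0$ is precisely the stabilizer $H_\omega$ under the action of $H$ on $\Omega$ (this is immediate, since $H\le G$ and $M=G_\omega$). Consequently the map $H(H\cap M) \ni M_0 h \mapsto \omega^h \in \omega^H$ is a well-defined $H$-equivariant bijection between the right coset space $M_0\backslash H$ and the orbit $\omega^H$. Thus the permutation group induced by $H$ on $\omega^H$ is permutation isomorphic to $H$ acting by right multiplication on $M_0\backslash H$.

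Next I would translate $2$-transitivity into the coset language. A transitive action of a group $L$ on $L_0\backslash L$ (for $L_0$ a point stabilizer) is $2$-transitive if and only if $L_0$ acts transitively on the remaining cosets, equivalently $L = L_0 \cup L_0 x L_0$ for any $x\notin L_0$, i.e. $L$ has a $(L_0,L_0)$-double coset decomposition into exactly two double cosets. Apply this to $K$: by hypothesis $K$ acts $2$-transitively with point stabilizer $K_0$, so $K = K_0 \cup K_0 y K_0$ for some (any) $y\in K\setminus K_0$. Now feed this through the isomorphism $\varphi\colon H\to K$. Since $\varphi$ is a bijective homomorphism carrying $H\cap M$ onto $K_0$, it carries the double coset decomposition of $K$ with respect to $K_0$ onto the double coset decomposition of $H$ with respect to $H\cap M$: explicitly, $H = (H\cap M)\cup (H\cap M)z(H\cap M)$ where $z=\varphi^{-1}(y)\notin H\cap M$. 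Running the $2$-transitivity criterion backwards, this shows that $H$ acting on $(H\cap M)\backslash H$ is $2$-transitive.

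Finally I would combine the two steps: $H$ on $\omega^H$ is permutation isomorphic to $H$ on $(H\cap M)\backslash H$, which we have just shown is $2$-transitive; hence $H$ acts $2$-transitively on $\omega^H$, as claimed.

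I do not expect any serious obstacle here; the only points that need a little care are (i) verifying that $H\cap M$ really is the $H$-stabilizer of $\omega$ — this uses nothing more than $M = G_\omega$ and $H\le G$ — and (ii) being careful about left versus right cosets and the direction in which $\varphi$ transports double cosets, so that the equivariance of the bijection $\omega^H \leftrightarrow (H\cap M)\backslash H$ is correctly matched with the $2$-transitivity criterion. One could also phrase the whole argument without coset spaces, directly: given two pairs of distinct points $(\alpha_1,\alpha_2)$ and $(\beta_1,\beta_2)$ in $\omega^H$, use transitivity of $H$ (which follows since $\omega^H$ is by definition an $H$-orbit) to move $\alpha_1\mapsto\omega$ and $\beta_1\mapsto\omega$, reducing to the claim that $H_\omega = H\cap M$ is transitive on $\omega^H\setminus\{\omega\}$; the latter is exactly the statement that $K_0$ is transitive on its $2$-transitive action minus a point, pulled back via $\varphi$. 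Either formulation is short, and the coset-space version is perhaps the cleanest to write up.
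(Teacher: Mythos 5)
Your proof is correct and follows essentially the same route as the paper: both identify $H\cap M$ as the $H$-stabilizer of $\omega$, use the isomorphism $\varphi$ (together with $\varphi(H\cap M)=K_0$) to obtain a permutation isomorphism between $H$ acting on the cosets of $H\cap M$ and $K$ acting on the cosets of $K_0$, and transport $2$-transitivity across it. Your explicit double-coset verification is simply a fleshed-out version of the paper's one-line appeal to the permutation isomorphism.
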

\begin{proof}
 The group isomorphisms $\varphi:H\to K$ and $\varphi_{|H\cap M}:H\cap M\to K_0$ induce a bijection from the set $K/K_0$ of right cosets of $K_0$ in $K$ to the set $H/(H\cap M)$ of right cosets of $H\cap M$ in $H$. Thus, we obtain a permutation isomorphism, and the result follows.
\end{proof}

Note that we do not require that $K$ is \emph{faithful} in Lemma~\ref{l: 2t}. We will tend to use the lemma with $2$-transitive groups $K$ where $K/N\cong E_{p^a}\rtimes C_{p^a-1}$. (Here we write $N$ for the kernel of the action of $K$, $E_{p^a}$ for the elementary abelian group of order $p^a$, and $C_{p^a-1}$ for the cyclic group of order $p^a-1$.) The group $K$ is sharply $2$-transitive on a set of size $p^a$ with stabilizer cyclic of order $C_{p^a-1}$. 

The next lemma will be useful when we come to deal with those actions that do not contain a beautiful subset.

\begin{lem}\label{l: auxiliary}
Let  $\omega_0,\omega_1,\omega_2\in \Omega$ with $G_{\omega_0}\cap G_{\omega_1}=1$. Suppose that there exists $g\in G_{\omega_0}\cap G_{\omega_2}G_{\omega_1}$ with $g\notin G_{\omega_2}$. Then the two triples $(\omega_0,\omega_1,\omega_2)$ and $(\omega_0,\omega_1,\omega_2^g)$ are $2$-subtuple complete but are not $3$-subtuple complete. In particular, $G$ is not binary.
\end{lem}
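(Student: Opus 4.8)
The idea is to exhibit two triples that witness a failure of binariness directly from the definition. We have $\omega_0, \omega_1, \omega_2 \in \Omega$ with $G_{\omega_0} \cap G_{\omega_1} = 1$, and an element $g \in G_{\omega_0} \cap G_{\omega_2}G_{\omega_1}$ with $g \notin G_{\omega_2}$. Write $g = hk$ with $h \in G_{\omega_2}$ and $k \in G_{\omega_1}$ (this is the content of $g \in G_{\omega_2}G_{\omega_1}$; note $G_{\omega_2}G_{\omega_1}$ is just a product of subgroups, not necessarily a subgroup, but membership still means such a factorization exists). The two triples I would compare are
\[
I = (\omega_0, \omega_1, \omega_2), \qquad J = (\omega_0, \omega_1, \omega_2^g).
\]

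First I would check $2$-subtuple completeness of the pair $(I,J)$. There are three pairs of coordinates to handle. For the pair of coordinates $\{1,2\}$: we need an element of $G$ sending $(\omega_0,\omega_1)$ to $(\omega_0, \omega_1)$ — the identity works. For the pair $\{1,3\}$: we need an element sending $(\omega_0,\omega_2)$ to $(\omega_0, \omega_2^g)$; since $g \in G_{\omega_0}$, the element $g$ fixes $\omega_0$ and sends $\omega_2$ to $\omega_2^g$, so $g$ works. For the pair $\{2,3\}$: we need an element sending $(\omega_1,\omega_2)$ to $(\omega_1, \omega_2^g)$; here the factorization $g = hk$ is used — since $k \in G_{\omega_1}$ fixes $\omega_1$, and $\omega_2^g = \omega_2^{hk} = (\omega_2^h)^k = \omega_2^k$ because $h \in G_{\omega_2}$ fixes $\omega_2$, the element $k$ sends $(\omega_1, \omega_2)$ to $(\omega_1, \omega_2^g)$. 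So $(I,J)$ is $2$-subtuple complete.

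Next I would show $(I,J)$ is not $3$-subtuple complete, i.e. no single element of $G$ sends $I$ to $J$. Suppose $x \in G$ with $\omega_0^x = \omega_0$, $\omega_1^x = \omega_1$, $\omega_2^x = \omega_2^g$. The first two conditions put $x \in G_{\omega_0} \cap G_{\omega_1} = 1$, so $x = 1$; but then $\omega_2 = \omega_2^x = \omega_2^g$ forces $g \in G_{\omega_2}$, contradicting the hypothesis $g \notin G_{\omega_2}$. Hence no such $x$ exists, and $(I,J)$ is a non-binary witness of size $3$; in particular $G$ is not binary.

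There is no real obstacle here — the lemma is essentially a repackaging of the definitions, and the only subtlety is keeping straight which of the three coordinate-pairs uses which of the elements $1$, $g$, and $k$ (equivalently, remembering that the hypothesis $g \in G_{\omega_2}G_{\omega_1}$ is exactly what supplies the witness for the $\{2,3\}$-pair). I would state the proof cleanly with the factorization $g = hk$ introduced at the outset.
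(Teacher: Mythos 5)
Your proof is correct and follows essentially the same route as the paper: factor $g=hk$ with $h\in G_{\omega_0}$-irrelevant, $h\in G_{\omega_2}$ and $k\in G_{\omega_1}$, use $1$, $g$ and $k$ as the three $2$-subtuple witnesses, and derive the contradiction from $G_{\omega_0}\cap G_{\omega_1}=1$. No changes needed.
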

\begin{proof}
  Let $x\in G_{\omega_2}$ and $y\in G_{\omega_1}$ with $g=xy$. A computation shows that $(\omega_0,\omega_1)^{id_G}=(\omega_0,\omega_1)$, $(\omega_0,\omega_2)^{g}=(\omega_0,\omega_2^g)$ and $(\omega_1,\omega_2)^{y}=(\omega_1,\omega_2^g)$, and hence the two triples $(\omega_0,\omega_1,\omega_2)$ and $(\omega_0,\omega_1,\omega_2^g)$ are $2$-subtuple complete. If $(\omega_0,\omega_1,\omega_2)$ and $(\omega_0,\omega_1,\omega_2^g)$ are $3$-subtuple complete, then there exists $z\in G$ with $(\omega_0,\omega_1,\omega_2)^z=(\omega_0,\omega_1,\omega_2^g)$; in particular, $z\in G_{\omega_0}\cap G_{\omega_1}=1$ and hence $g\in G_{\omega_2}$, which is a contradiction.
\end{proof}

The final lemma is nothing more than an observation. It requires no proof but is included as it will be used repeatedly in what follows.

\begin{lem}\label{l: stabs}
 Let $H$ be a subgroup of $G$, let $M$ be the stabilizer in $G$ of the point $\omega\in\Omega$ and let $\Lambda=\omega^H$. Then $\centr MH\leq G_{(\Lambda)}$ and $G_\Lambda \leq \nor G {G_{(\Lambda)}}$.
\end{lem}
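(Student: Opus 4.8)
The statement decomposes into two independent assertions, each of which I would prove by a direct computation with the action. For the first, $\centr M H\leq G_{(\Lambda)}$, I would take an arbitrary $c\in\centr MH$ and an arbitrary point of $\Lambda$, which by definition of $\Lambda=\omega^H$ has the form $\omega^h$ for some $h\in H$. Since $c\in M=G_\omega$ fixes $\omega$ and $c$ commutes with $h$, I compute $(\omega^h)^c=\omega^{hc}=\omega^{ch}=(\omega^c)^h=\omega^h$, so $c$ fixes every point of $\Lambda$ and hence $c\in G_{(\Lambda)}$.

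For the second assertion, $G_\Lambda\leq\nor G{G_{(\Lambda)}}$, I would invoke (or re-prove in one line) the elementary general fact that the set-wise stabilizer of any subset normalizes its point-wise stabilizer. Concretely, for $g\in G_\Lambda$ and $x\in G_{(\Lambda)}$, I would check that $g^{-1}xg$ fixes $\Lambda$ point-wise: for $\lambda\in\Lambda$ we have $\lambda^{g^{-1}}\in\Lambda$ (because $g^{-1}\in G_\Lambda$), so $\lambda^{g^{-1}xg}=(\lambda^{g^{-1}})^{xg}=(\lambda^{g^{-1}})^{g}=\lambda$. Thus $g^{-1}G_{(\Lambda)}g=G_{(\Lambda)}$, giving $g\in\nor G{G_{(\Lambda)}}$.

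There is no real obstacle here: both steps are routine manipulations of the defining properties of point-wise and set-wise stabilizers, and the only subtlety worth flagging is the order of composition in the convention $\omega^{gh}=(\omega^g)^h$, which must be respected when moving $c$ past $h$ in the first computation. This is exactly why the lemma is billed in the excerpt as ``nothing more than an observation''.
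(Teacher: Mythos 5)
Your proof is correct, and it is exactly the routine verification the paper has in mind — the paper explicitly omits a proof, remarking that the lemma "requires no proof," and your two computations (moving $c$ past $h$ using $c\in G_\omega$ and centrality, and checking that conjugation by $g\in G_\Lambda$ preserves $G_{(\Lambda)}$) are the standard way to fill it in. The only cosmetic point is that your second computation directly yields $g^{-1}G_{(\Lambda)}g\subseteq G_{(\Lambda)}$; equality follows by applying the same argument to $g^{-1}$ (or by finiteness), which you implicitly use and which is harmless here.
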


\subsection{Forbidden configurations}\label{s: forbidden}

As we mentioned above, Lemma~\ref{l: 2trans gen} can be thought of as a particular example of a \textit{forbidden configuration}. The idea is as follows: suppose we want to prove that the action of $G$ on $\Omega$ is not binary; to do this it would be convenient to be able to observe that there is some subset $\Lambda\subseteq \Omega$ for which $G^\Lambda$ is not binary, and conclude (somehow) that the action of $G$ on $\Omega$ is not binary. Unfortunately, the fact that $G^\Lambda$ is not binary does not necessarily imply that $G$ is not binary -- but a moment's thought shows that this implication {\bf will} hold if we can find a non-binary witness in $\Lambda$ of size $|\Lambda|$. Let us summarise this observation:

\begin{lem}\label{l: forbidden}
 Suppose that there exists a subset $\Lambda \subseteq \Omega$ for which $G^\Lambda$ is  not binary and for which there exists a non-binary witness of size $|\Lambda|$. Then $G$ is not binary.
\end{lem}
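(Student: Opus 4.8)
The strategy is to take the non-binary witness that lives inside $\Lambda$ and reinterpret it, verbatim, as a non-binary witness for the action of $G$ on all of $\Omega$; since a permutation group is non-binary exactly when it admits a non-binary witness, this yields the conclusion at once. Write $k=|\Lambda|$ and let $(I,J)$ be the hypothesised non-binary witness of size $k$ with entries in $\Lambda$; one takes $I$ and $J$ to be orderings of $\Lambda$, that is, $k$-tuples listing the elements of $\Lambda$ each exactly once. It is enough to assume this of $I$: if $I$ is an ordering of $\Lambda$ and $(I,J)$ is $2$-subtuple complete for $G^\Lambda$, then for distinct coordinates $a,b$ there is $h\in G^\Lambda\le\Sym(\Lambda)$ with $I_a^h=J_a$ and $I_b^h=J_b$, and since $I_a\ne I_b$ and $h$ is injective we get $J_a\ne J_b$; so $J$ is an ordering of $\Lambda$ as well.

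Granting this, I would verify the two halves of ``non-binary witness'' for $G$ acting on $\Omega$. First, $2$-subtuple completeness transfers upward: given coordinates $a,b$, pick $h\in G^\Lambda$ with $I_a^h=J_a$ and $I_b^h=J_b$; since $G^\Lambda=G_\Lambda/G_{(\Lambda)}$, any preimage $\tilde h\in G_\Lambda\le G$ of $h$ acts on $\Lambda$ exactly as $h$ does and so realises these same two identities inside $G$. Second, $k$-subtuple completeness transfers downward: if some $g\in G$ satisfied $I^g=J$, then, $I$ and $J$ both being orderings of $\Lambda$, the element $g$ would carry the set $\Lambda$ onto $\Lambda$, so $g\in G_\Lambda$, and the image $\bar g\in G^\Lambda$ would satisfy $I^{\bar g}=J$ -- contradicting the assumption that $(I,J)$ is not $k$-subtuple complete for $G^\Lambda$. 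Hence $(I,J)$ is a non-binary witness of size $k$ for $G$, and $G$ is not binary.

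The one step I expect to be delicate is the reduction, at the start, to a witness whose tuples are orderings of $\Lambda$: this is exactly what forces a putative $g$ with $I^g=J$ into the set-wise stabiliser $G_\Lambda$, and it explains why the hypothesis insists on a witness of size precisely $|\Lambda|$ with entries in $\Lambda$ rather than on some arbitrary non-binary witness for $G^\Lambda$ -- a witness of smaller size, or one whose entries do not exhaust $\Lambda$, would leave $g$ free to move points of $\Lambda$ outside $\Lambda$ and the argument would collapse. Everything else is a routine passage along the quotient map $G_\Lambda\to G^\Lambda$; note also that the hypothesis ``$G^\Lambda$ is not binary'' is already implied by the existence of the witness and is used only for emphasis.
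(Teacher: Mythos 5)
Your argument is correct and is precisely the ``moment's thought'' the paper leaves unwritten: lift $2$-subtuple completeness along $G_\Lambda\to G^\Lambda$ by choosing preimages, and use the fact that $I$ and $J$ are orderings of $\Lambda$ to force any $g\in G$ with $I^g=J$ into the set-wise stabilizer $G_\Lambda$, whence its image in $G^\Lambda$ contradicts the failure of $|\Lambda|$-subtuple completeness there. The only loose end is your opening normalisation: the paper's definition of a witness of size $n$ does not literally forbid repeated entries, so ``one takes $I$ to be an ordering of $\Lambda$'' is a (clearly intended --- compare the explicit example following the lemma and the proof of Lemma~\ref{l: 2trans gen}) reading of the hypothesis rather than a reduction you can perform; you rightly flag exactly this as the delicate point.
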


Now, Lemma~\ref{l: 2trans gen} is a particular case of Lemma~\ref{l: forbidden}. More concretely let us define a {\it forbidden configuration} to be a subset $\Lambda\subseteq \Omega$ for which $G^\Lambda$ is not binary and for which there is a non-binary witness in $\Lambda$ of size $|\Lambda|$. 

In this paper, the only example of a forbidden configuration that will need is that of a beautiful subset. However other examples do exist: Suppose that $\Lambda$ is a subset of $\Omega$ of size $6$. Let us write $\Lambda=\{1,2,3,4,5,6\}$. Now suppose that $G^\Lambda$ contains the elements $f=(1\,2)(3\,4)$ and $g=(1\,2)(5\,6)$ but not the element $h=(1\,2)$. Then one can see immediately that the following pair is a non-binary witness of size $6$:
\[
((1,2,3,4,5,6),(2,1,3,4,5,6)).
\]
This forbidden configuration has already proved useful in the study of the primitive actions of almost simple groups with socle isomorphic to ${\rm PSL}_2(q)$, for some prime power $q$. These actions do not, in general, admit beautiful subsets so that line of attack on Cherlin's conjecture is closed to us; however the forbidden configuration described above occurs for many of these actions, and immediately yields a proof of Cherlin's conjecture in this setting \cite{ghs_binary}.

One is naturally led to ask the following question: \emph{Can one classify the pairs $(H,\Lambda)$ where $H$ is a group acting on a set $\Lambda$ such that there is a non-binary witness of size $|\Lambda|$?} 

Note, finally, that we have defined forbidden configurations with respect to the study of binary actions. It should be clear that the notion can be easily extended to the study of actions of any fixed arity, and could be used in an attempt to prove a version of Conjecture~\ref{conj: cherlin original} for primitive actions of arity $3$ or $4$.

A concrete example can be found in our approach to Theorem~B: our method there is to find a beautiful subset $\Lambda$ of size $t$ for almost every action of a simple group $S$ under the ambit of the theorem. In a number of cases, we have $t=q+1$ (where $q$ is a prime power) and we demonstrate that the action of $S^\Lambda$ on $\Lambda$ is permutation isomorphic to the natural action of $\PGL_2(q)$ on the $q+1$ points of the projective line over $\Fq$. In particular, the action of $S^\Lambda$ is 3-transitive and hence, for $q\geq 5$, the subset $\Lambda$ constitutes a forbidden configuration for actions of arity $3$ as well as for binary actions. One could use this approach in many other cases to prove that the arity of the actions considered in Theorem~B is not just greater than $2$, as the theorem asserts, but greater than $3$ (with more exceptions).

\section{Alternating socle}\label{s: alternating}

\begin{prop}\label{p: alt}
Let $G$ be a finite almost simple primitive group on $\Omega$ with socle the alternating group $\Alt(n)$ with $n\geq 5$. Then one of the following holds:
\begin{enumerate}
 \item $\Omega$ contains a beautiful subset (with respect to this action);
 \item the action is one of those listed in Table~$\ref{t: alt}$.
\end{enumerate}
 \end{prop}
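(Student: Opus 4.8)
The plan is to classify primitive actions of almost simple groups with socle $\Alt(n)$ according to the O'Nan--Scott-type description of maximal subgroups of $\symme(n)$ and $\alter(n)$ (the classical theorem of Liebeck--Praeger--Saxl), and to exhibit a beautiful subset in each case except for a short explicit list. Concretely, a point stabilizer $M$ in such a primitive action is a maximal subgroup of $G$, and (intersecting with the socle) $M\cap\Alt(n)$ is of one of the familiar types: \emph{intransitive} (stabilizer of a $k$-subset, $M\cap\Alt(n) = (\symme(k)\times\symme(n-k))\cap\Alt(n)$), \emph{imprimitive} (stabilizer of a partition into $b$ blocks of size $a$, so $M\cap\Alt(n) = (\symme(a)\wr\symme(b))\cap\Alt(n)$), \emph{affine} ($\AGL_d(p)\cap\Alt(n)$ with $n=p^d$), \emph{diagonal} ($T^k.(\Out(T)\times\symme(k))$ type with $n=|T|^{k-1}$), \emph{product} ($\symme(m)\wr\symme(r)$ in product action with $n=m^r$), or \emph{almost simple} ($M$ primitive almost simple on $n$ points). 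In each case I would build the beautiful subset $\Lambda$ as an orbit $\omega^H$ for a carefully chosen subgroup $H\le G$, using Lemma~\ref{l: 2t}: choose $H$ with a known $2$-transitive action, identify $H\cap M$ with the point stabilizer of that action, and conclude that $H$ (hence $G^\Lambda\ge H^\Lambda$) acts $2$-transitively on $\Lambda$; then one only has to check $G^\Lambda\not\cong\alter(\Lambda),\symme(\Lambda)$, which will follow because $|\Lambda|$ is small relative to $n$ and $G^\Lambda$ is a known small $2$-transitive group (typically of affine type $E_{p^a}\rtimes C_{p^a-1}$, or $\PGL_2(q)$).

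The key steps, in order, would be: (i) fix notation and recall the Liebeck--Praeger--Saxl classification of maximal subgroups of $\symme(n)$ and $\alter(n)$, splitting into the six types above; (ii) handle the intransitive (subset) case first, which is the generic and most delicate case, since the natural action on $k$-subsets with $k\ge 2$ is the one we expect to \emph{fail} to contain a beautiful subset for the very smallest parameters --- here I would look for a beautiful subset coming from the action on pairs inside a sub-configuration, e.g. restricting to a smaller symmetric group $\symme(m)\le\symme(n)$ acting on $k$-subsets, and recognising a $2$-transitive group like $\PGL_2(q)$ or an affine group on an orbit; (iii) treat the imprimitive case by moving blocks around with a suitable $\PGL$ or affine subgroup; (iv) treat the product-action and diagonal-type cases, where the factorisation structure makes it easy to find a subgroup acting $2$-transitively on a small orbit; (v) treat the affine case $n=p^d$, where $\AGL_d(p)$ itself contains plenty of $2$-transitive subgroups of the form $E_{p^a}\rtimes C_{p^a-1}$; (vi) treat the almost-simple case, where $M$ is primitive of degree $n$ and one typically has substantial freedom; (vii) finally, for each family, pin down the finitely many parameter values for which the construction fails (because the putative $\Lambda$ is too small, or $G^\Lambda$ is forced to be $\alter(\Lambda)$ or $\symme(\Lambda)$), and collect these into Table~\ref{t: alt}.

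The main obstacle I expect is the intransitive case --- the natural action on $k$-element subsets --- for small $n$ and small $k$. This is the case most closely tied to $\symme(n)$ itself (indeed $G=\symme(\Omega)$ is the ``good'' conclusion of Cherlin's conjecture, so one expects the action on $2$-subsets, $3$-subsets, etc.\ to be genuinely hard to rule out or to be genuine exceptions), and one has to be careful: a naive orbit might induce the full symmetric or alternating group on it, which is useless. The delicate point is to find an orbit $\Lambda$ on which the induced group is a \emph{proper} $2$-transitive subgroup of $\symme(\Lambda)$; for actions on $k$-subsets this forces one to work with an embedded $\PGL_d(q)$ or $\AGL_d(p)$ acting on subspaces/affine-subspaces and to check the induced action carefully. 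I would expect the genuine exceptions in Table~\ref{t: alt} to be precisely a handful of small-degree actions on $k$-subsets (and perhaps on partitions), and verifying that these really admit no beautiful subset --- i.e.\ a lower-bound/nonexistence argument rather than a construction --- is the part that needs the most care, likely via a direct analysis of the subgroup lattice for those specific small groups.
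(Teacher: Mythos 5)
Your decomposition of the problem is the same as the paper's: split according to whether the point stabilizer $M$ is intransitive, imprimitive or primitive on $\{1,\dots,n\}$, and in the primitive case use the O'Nan--Scott subdivision into affine, diagonal, product and almost simple types, hunting in each case for an orbit of a small subgroup with a known $2$-transitive action. However, the proposal misjudges where the difficulty lies, and is missing the two ideas that actually make the argument close. First, the exceptions are not ``a handful of small-degree actions on $k$-subsets'': the action on $2$-subsets admits no beautiful subset for \emph{any} $n$ (an infinite exceptional family, proved by a graph-theoretic argument showing any edge-$2$-transitive candidate graph is a matching or a star), whereas for every $k$ with $2<k<n/2$ and for every partition action a beautiful subset \emph{does} exist. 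The constructions there are not orbits recognised via Lemma~\ref{l: 2t} but explicit combinatorial designs: the seven lines of the Fano plane padded out to $k$-sets (so the set-wise stabilizer induces $\PGL_3(2)$ on seven points), and for partitions into blocks of size $2$ the six perfect matchings of the Petersen graph (inducing $\PGL_2(5)$ on six points). Your plan of ``restricting to a smaller symmetric group and recognising $\PGL_2(q)$ on an orbit'' does not by itself produce such a configuration, and in particular gives no mechanism for ruling out that the induced group is all of $\Sym(\Lambda)$ or $\Alt(\Lambda)$.

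Second, in the primitive-stabilizer case --- which you expect to offer ``substantial freedom'' --- the orbit method you describe (essentially Lemma~\ref{l: beaut} in the paper) hinges on verifying that the generator $g$ of the Frobenius kernel of $H=\langle g\rangle\rtimes\langle h\rangle$ does \emph{not} lie in $M$, and that no nontrivial element of $M$ fixes almost all of $\{1,\dots,n\}$; both verifications rest on upper bounds for fixed-point ratios of non-identity elements of primitive groups (Liebeck--Saxl, Guralnick--Magaard, etc.), which your proposal never invokes. This is also exactly where the remaining exceptional lines of Table~\ref{t: alt} arise: the construction needs an element of order $4$ (or $6$) in $M$, and by Walter's theorem the almost simple stabilizers with no element of order $4$ are those with socle $\PSL_2(2^f)$ or $\PSL_2(q)$, $q\equiv 3,5\pmod 8$ (plus $J_1$ and the Ree groups, which are handled separately), while the affine case degenerates when $n$ is prime. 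Without the fixed-point-ratio input and without identifying these obstructions, the plan cannot produce the correct exceptional list, nor complete the generic case.
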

\begin{center}
\begin{table}[!ht]
\begin{tabular}{ccl}
\toprule[1.5pt]
Line & Group & Details of action \\
\midrule[1.5pt]
1 & $\Alt(n)$ or $\Sym(n)$, $n\geq 5$ & Natural action: $\Omega=\{1,\dots, n\}$; \\
2 & $\Alt(n)$ or $\Sym(n)$, $n\geq 5$ & Action on $2$-subsets: $\Omega=\{\{i,j\} \mid 1\leq i<j\leq n\}$; \\
3 & $\Alt(n)$, $n\ge 13$ prime&Action on right cosets of $\AGL_1(n)\cap \Alt(n)$;\\		 
4 & $\Alt(n)$ or $\Sym(n)$&Action on right cosets of a max. subgroup $M$,\\
&&$M$ primitive on $\{1,\ldots,n\}$ and with socle\\
&&$\PSL_2(q)$ where $q=2^f$ or $q\equiv 3,5\pmod 8$.\\
5&$\Alt(6)$ or $\Sym(6)$&actions of degree $6$ or $15$\\
\bottomrule[1.5pt]
\end{tabular}
\caption{Primitive actions of $\Alt(n)$ and $\Sym(n)$ -- Cases where a beautiful subset may not exist.}\label{t: alt}
\end{table}
\end{center}

%Note that the statement of Proposition~\ref{p: alt} covers all almost simple groups $G$ with an alternating socle, except when the socle is $\alter(6)$ and $G$ is $M_{10}$, $\mathrm{P}\Sigma\mathrm{L}_2(9)$ and $\PGammaL_2(9)$. These latter cases are covered by unpublished calculations of Wiscons \cite{wiscons2} using GAP \cite{GAP}.

The case where $G$ has socle $\Alt(6)$ is exceptional. Note first that the proposition implies that, in this case, all primitive actions of $G$ have a beautiful subset except when $G$ is either $\Alt(6)$ or $\Sym(6)$. In these cases, $G$ has two conjugacy classes of maximal subgroups isomorphic to $\Alt(5)$ or to $\Sym(5)$. The two primitive actions corresponding to these two conjugacy classes are permutation isomorphic to each other, and are permutation isomorphic to the natural action of $G$; similarly $G$ has two conjugacy classes of maximal subgroups isomorphic to $\Sym(4)$ or to $\Sym(4)\times C_2$; again the corresponding primitive actions are permutation isomorphic to each other, and are permutation isomorphic to the action of $G$ on $2$-subsets. Thus, reading Table~\ref{t: alt} up to permutation isomorphism (rather than permutation equivalence), Line~5 is superfluous. 

The groups appearing in the first two lines of Table~\ref{t: alt} are genuine exceptions for Proposition~\ref{p: alt}, see Lemma~\ref{l: M intrans 2}. However, some of the groups in lines~$3$ or~$4$ may have beautiful subsets. For instance, $\Alt(41)$ acting on the cosets of $\Alt(41)\cap \AGL_1(41)$ does have a beautiful subset of size $5$; whereas, $\Alt(13)$ acting on the cosets of $\Alt(13) \cap \AGL_1(13)$ has no beautiful subsets. A similar comment applies for the groups in the fourth line of Table~\ref{t: alt}. The existence of beautiful subsets for these actions depends on detailed arithmetical conditions and studying these here would take too much of a detour.

Before we prove Proposition~\ref{p: alt}, let us see why it implies Theorem~A. 

\begin{proof}[Proof of Theorem A]
We must deal with the actions given in Table~\ref{t: alt}.

\smallskip

\noindent\textsc{Line~1 in Table~\ref{t: alt}.}

\smallskip

\noindent If $G=\Sym(n)$ in its natural action, then the action is binary and the conjecture holds. If $G=\Alt(n)$ in its natural action, then the relational complexity is well-known to be equal to $n-1$ (see~\cite[page~$340$]{cherlin2}), and the action is not binary.

\smallskip

\noindent\textsc{Line~2 in Table~\ref{t: alt}.}

\smallskip

\noindent If $G=\Sym(n)$ and $G$ acts on the set of $2$-subsets of $\{1,\ldots,n\}$, then~\cite[Theorem~1, page~267]{cherlin_martin} asserts that the relational complexity is $3$, and so the action is not binary.

If $G=\Alt(n)$ and $G$ acts on the set of $2$-subsets of $\{1,\ldots,n\}$, then consider the following pair of $(n-1)$-tuples:
\begin{align*}
& (\{1,2\}, \{1,3\}, \{1,4\}, \{1,5\}, \{1,6\},\dots, \{1,n\}) \\
& (\{1,3\}, \{1,2\}, \{1,4\}, \{1,5\}, \{1,6\},\dots, \{1,n\}).
\end{align*}
One can map the first tuple to the second via the element $(2,3)\in \Sym(n)$. Moreover, using $n\geq 5$, one can quickly conclude that these two tuples are $2$-subtuple complete with respect to the action of $\Alt(n)$. On the other hand, $(2,3)$ is the only element in $\Sym(n)$ that maps the first tuple to the second, hence this pair of $(n-1)$-tuples is not $(n-1)$-subtuple complete. Thus the action of $\Alt(n)$ on $2$-sets is not binary.

\smallskip

\noindent\textsc{Line~3 in Table~\ref{t: alt}.}

\smallskip

\noindent Suppose now that $G=\Alt(n)$, $n$ is a prime number with $n\ge 13$ and $\Omega$ is the set of right cosets of $\AGL_1(n)\cap \Alt(n)$ in $\Alt(n)$. 
Consider $x:=(1\,2\,3\,4\,5 \cdots n)$ and $y:=x^{(1\,2\,3)}=(2\,3\,1\,4\,5\cdots n)$. An easy computation yields that $g:=xy=(2 \, 1\,3\,5\,7\,\cdots n\,4\,6\,8\,\cdots n\!\!-\!\! 1)$ is an $n$-cycle.

Now, $\nor G {\langle g\rangle}\cong \AGL_1(n)\cap \Alt(n)$ and the three subgroups $\nor G{\langle x\rangle}$, $\nor G {\langle y\rangle}$ and $\nor G{\langle g\rangle}$ are $G$-conjugate. Hence we may write $G_{\omega_0}=\nor G{\langle g\rangle}$, $G_{\omega_1}=\nor G{\langle y\rangle}$ and $G_{\omega_2}=\nor G{\langle x\rangle}$, for some $\omega_0, \omega_1,\omega_2\in \Omega$. In particular, $g\in G_{\omega_0}\cap G_{\omega_2}G_{\omega_1}$. A computation yields $G_{\omega_0}\cap G_{\omega_1}=G_{\omega_0}\cap G_{\omega_2}=1$. (For this to be true, we only need $n>5$.) Therefore, $g\notin G_{\omega_2}$ and, by Lemma~\ref{l: auxiliary}, $G$ is not binary.

\smallskip

\noindent\textsc{Line~4 in Table~\ref{t: alt}.}

\smallskip

\noindent Here $G$ is either $\Sym(n)$ or $\Alt(n)$ acting on $\Omega$, and for $\omega_0\in \Omega$, the point stabilizer $G_{\omega_0}$ is primitive  on $\Lambda = \{1,\ldots,n\}$ with socle $\PSL_2(2^f)$ for $f\ge 2$, or $\PSL_2(q)$ for $q\equiv 3,5\pmod 8$. As $\PSL_2(2^2)\cong\PSL_2(5)$, we assume that $f\ge 3$.

We first deal with small values of $q$. When $q=5$, $\PSL_2(5)$ has primitive permutation representations of degree $5$, $6$ and $10$. The action of degree $5$ gives $\PSL_2(5)\cong \Alt(5)$ and hence $\PSL_2(5)$ is not maximal in $\Alt(5)$. Similarly, the action of degree $10$ gives an embedding of $\PSL_2(5)$ into $\Alt(10)$ with $\PSL_2(5)$ not maximal in $\Alt(10)$. Finally, the action of degree $6$ gives rise to an embedding of $\PSL_2(5)$ into $\Alt(6)$, and the action of $\Alt(6)$ on the coset space $\Alt(6)/\PSL_2(5)$ is permutation isomorphic to the action of $\Alt(6)$ on $\{1,2,3,4,5,6\}$, which is not binary. The argument for $\PGL_2(5)$ is entirely similar. 

Next, $\PSL_2(8)$ and $\mathrm{P}\Gamma \mathrm{L}_2(8)$ have primitive permutation representations of degree $9$, $28$ and $36$. As $\PSL_2(8)<\mathrm{P}\Gamma \mathrm{L}_2(8)$ and $|\mathrm{P}\Gamma \mathrm{L}_2(8):\PSL_2(8)|=3$, we see that  in none of these permutation representations $\PSL_2(8)$ is maximal in the corresponding alternating group. Using  the inclusion $\mathrm{P}\Gamma \mathrm{L}_2(8)<\mathrm{PSp}_6(2)$, one can check that the permutation representations of degree $28$ and $36$ do not yield maximal subgroups for the corresponding alternating groups. Finally,  the permutation representation of $\mathrm{P}\Gamma \mathrm{L}_2(8)$ of degree $9$ gives rise to an embedding into $\Alt(9)$, now the action of $\Alt(9)$ on the cosets of $\mathrm{P}\Gamma \mathrm{L}_2(8)$ is primitive of degree $120$, and with the invaluable help of \texttt{magma}~\cite{magma} we check that this action is not binary. In the rest of the proof, we may assume that $q>8$.

Set
\[r:=
\begin{cases}
  \frac{q-1}{2}&\textrm{when }q\equiv 3\pmod 8,\\
  \frac{q+1}{2}&\textrm{when }q\equiv 5\pmod 8,\\
  q+1&\textrm{when }q=2^f.
  \end{cases}
\]
Observe that $r$ is odd and that $G_{\omega_0}$ contains an element $h$ of order $r$. From~\cite{GPS,GS}, the element $h$ has a cycle of length $r$ in its action on $\Lambda=\{1,\ldots,n\}$, and hence, relabeling the indexed set $\Lambda=\{1,\ldots,n\}$ if necessary, we may assume that
$$h=(1\,2\,3\,4\cdots \,r)h',$$
for some $h'\in \Sym(\{r+1,\ldots,n\})$.

Recall that from Bertrand's postulate, for each $\kappa\in\mathbb{N}$ with $\kappa>3$, there exists a prime $p$ with $\kappa<p<2\kappa-2$. Applying this result, we see that there exists a prime $p$ such that
\[
\begin{cases}
  \frac{q+5}{8}<p<2\cdot \frac{q+5}{8}-2=\frac{q-3}{4}&\textrm{when }q\equiv 3\pmod 8, q\notin \{11,19\},\\
  \frac{q+3}{8}<p<2\cdot \frac{q+3}{8}-2=\frac{q-5}{4}&\textrm{when }q\equiv 5\pmod 8, q\neq 13,\\
 2^{f-3}<p<2\cdot 2^{f-3}-2=2^{f-2}-2&\textrm{when }f\ge 5. 
\end{cases}
\]
We also set
\[
p:=
\begin{cases}
  3&\textrm{when }q=11,\\
   7&\textrm{when }q=19,\\
   5&\textrm{when }q=13,\\
   5&\textrm{when }2^f=2^4=16.\\
  \end{cases}
\]

Let $n_0$ be the minimal degree of a faithful permutation representation of $\PSL_2(q)$, and recall that $n_0=q+1$, except when $q\in\{5,7,9,11\}$ where the amended values are $5,7,6,11$ respectively. Since we are assuming that $q>8$ (and $q=2^f$ or $q\equiv 3,5\pmod 8$), we have $n_0=q+1$ except for $q=11$ where $n_0=11$.

Let $$\tau:=(1\,2\,\cdots\,p)$$ and $\omega_1:=\omega_0^\tau$, in particular, $G_{\omega_1}=G_{\omega_0}^\tau$. Let $u\in G_{\omega_0}^\tau\cap G_{\omega_0}=G_{\omega_1}\cap G_{\omega_0}$ and suppose that $u\ne 1$. Then, $u=v^\tau$ for some $v\in G_{\omega_0}\setminus\{1\}$. Now, $[v,\tau]=v^{-1}u\in G_{\omega_0}$ and moreover $[v,\tau]=(v^{-1}\tau^{-1}v)\tau$ is the product of two $p$-cycles of $\Sym(n)$. (Given $x\in \Sym(\Lambda)$, denote by $\mathrm{fix}_{\Lambda}(x)$ the number of fixed points of $x$ on $\Lambda$.) We have
\begin{eqnarray*}
  \frac{\mathrm{fix}_\Lambda([v,\tau])}{|\Lambda|}&\ge &\frac{n-2p}{n}=1-\frac{2p}{n}\ge 1-\frac{2p}{n_0}\ge 
  \begin{cases}
    1-\frac{6}{11}&\textrm{when }q=11,\\
     1-\frac{7}{10}&\textrm{when }q=19,\\
    1-\frac{5}{7}&\textrm{when }q=13,\\
     1-\frac{10}{17}&\textrm{when }q=16,\\
    1-\frac{q-3}{2(q+1)}&\textrm{when }q\equiv 3\pmod 8, q>19\\
    1-\frac{q-5}{2(q+1)}&\textrm{when }q\equiv 5\pmod 8, q>13\\
    1-\frac{q-8}{2(q+1)}&\textrm{when }q=2^f, q\ge 16.
    \end{cases}
\end{eqnarray*}
Now a very useful upper bound for $$\max\left\{\frac{\mathrm{fix}_\Lambda(x)}{|\Lambda|}\mid x\in G_{\omega_0}\setminus\{1\}\right\}$$ can be read off from~\cite[Theorem~$1$ and Table~$1$]{LiebeckSaxl}. (In using this information, observe that, when $q$ is odd, $q$ is not a square because $q\equiv 3,5\pmod 8$.) %, and that $n\ne q(q+1)/2$ because in the primitive permutation representation of degree $q(q+1)/2$  of $\mathrm{P}\Gamma \mathrm{L}_2(q)$ the inclusion of $\mathrm{P}\Gamma \mathrm{L}_2(q)$ is not maximal being contained in $\Sym(q+1)$ in its natural primitive action on the $2$-subsets of $\{1,\ldots,q+1\}$.)
 This upper bound implies that $[v,\tau]=1$, i.e.\ $\tau$ and $v$ commute (in other words, the lower bound for $\mathrm{fix}_\Lambda([v,\tau])/|\Lambda|$ obtained above is larger than the upper bound for the fixed-point-ratio of non-identity elements obtained in~\cite{LiebeckSaxl}).

As $\tau$ is a $p$-cycle, we obtain $v=\tau^s v'$, for some $v'\in \Sym(\{p+1,\ldots,n\})$ and some $s\in\mathbb{N}$. Suppose that $s=0$. Then $v=v'\in G_{\omega_0}$ fixes at least $p$ points of $\Lambda$. Therefore
\begin{eqnarray*}
  \frac{\mathrm{fix}_\Lambda(v)}{|\Lambda|}&\ge &\frac{p}{n}\ge \frac{p}{n_0}>
  \begin{cases}
    \frac{3}{11}&\textrm{when }q=11,\\
    \frac{7}{20}&\textrm{when }q=19,\\
    \frac{5}{14}&\textrm{when }q=13,\\
    \frac{5}{17}&\textrm{when }q=16,\\
    \frac{q+5}{8(q+1)}&\textrm{when }q\equiv 3\pmod 8, q>19\\
    \frac{q+3}{8(q+1)}&\textrm{when }q\equiv 5\pmod 8, q>13\\
    \frac{q}{8(q+1)}&\textrm{when }q=2^f, q>16.
    \end{cases}
\end{eqnarray*}
In each case, a careful case-by-case analysis of this lower bound with the upper bound for $\max_{x\ne 1}\{\mathrm{fix}_\Lambda(x)/|\Lambda|\}$ in~\cite[Theorem~$1$ and Table~$1$]{LiebeckSaxl} reveals that $v=1$, which is a contradiction. Therefore $s\neq 0$. By raising $v$ to a suitable power, we may assume that $v=\tau v'$, for some $v'\in \Sym(\{p+1,\ldots,n\})$. Now, the permutation $hv^{-1}$ belongs to $G_{\omega_0}$ and fixes at least $p-1$ points: indeed, $hv^{-1}$ fixes the points $1,2,\ldots,p-1$ of $\Lambda$. Arguing again as above, we see that $hv^{-1}$ fixes too many points to be an element of $G_{\omega_0}$, a contradiction. This contradiction has arisen from assuming that $G_{\omega_0}\cap G_{\omega_1}\ne 1$. Therefore
$$G_{\omega_0}\cap G_{\omega_1}=1.$$

Set $k:=h^\tau\in G_{\omega_0}^\tau=G_{\omega_1}$ and observe that $k=(2\,3\,\cdots p\,1\,p\!\!+\!\!1\cdots r)h'$, because $r>p$. Now set $z:=hk$ and observe that
$$z=(1\,3\,5\,\cdots p\!\!-\!\!2 \,p\,p\!\!+\!\!2\,\cdots r\,p\!\!+\!\!1\,p\!\!+\!\!3\,\cdots r\!\!-\!\!1\,2\,4\cdots p\!\!-\!\!1)h'^2.$$
Since $r$ is odd, $z$ has the same cycle structure as $h$ and $k$ and hence $z\in G_{\omega_2}$, for some $\omega_2\in \Omega$.

Observe that
$$h=zk^{-1}\in G_{\omega_0}\cap G_{\omega_2}G_{\omega_1}.$$
If $h\in G_{\omega_2}$, then
$$zh^{-2}=(r\,p\!\!-\!\!1\,r\!\!-\!\!1)\in G_{\omega_2},$$
but this is a contradiction because the only primitive groups containing a $3$-cycle are $\Alt(n)$ and $\Sym(n)$ by a celebrated theorem of Jordan. Therefore $h\notin G_{\omega_2}$. Now we immediately deduce from Lemma~\ref{l: auxiliary} that $G$ is not binary.

\smallskip

\noindent\textsc{Line 5 in Table~\ref{t: alt}.}

\smallskip

\noindent The proof follows from the proof of Line~1 and Line~2 and the comment following the statement of Proposition~\ref{p: alt}.
\end{proof}

Note that the exact arities of various actions of $\Alt(n)$ and $\Sym(n)$ have been worked out (see, for example, \cite{cherlin_martin}). These results imply Theorem~A in certain special cases.

\subsection{Proving Proposition~\ref{p: alt}}

Our job for the remainder of this section is to prove Proposition~\ref{p: alt}.
%It will be useful to deal first with $n$ small. The following lemma yields Proposition~\ref{p: alt} for $n\leq 27$; it can be proved with the invaluable help of the computer algebra system \texttt{magma}~\cite{magma}.
The case of $n=6$ is rather special and we dispose of this peculiarity by invoking the help of a computer-aided computation: we  construct all primitive faithful permutation representations of $\Alt(6)$, $\Sym(6)$, $\PGL_2(9)$, $M_{10}$ and $\mathrm{P}\Gamma \mathrm{L}_2(9)$ and we find in each case a beautiful subset, except for the cases listed in Table~\ref{t: alt}. Having this case dealt with,
from here on we will take $M$ to be a maximal subgroup of $G$, where $G=\Alt(n)$ or $\Sym(n)$. We are interested in the action of $G$ on $\Omega$, where $\Omega$ is the set of right cosets of $M$.

Recall that the maximal subgroups of $G$ are described by the O'Nan--Scott--Aschbacher theorem; we will use notation for this theorem consistent with \cite{LPS1, LPS2}. The description of $M$ is given in terms of its action on the set $\{1,\dots, n\}$: in its action on $\{1,\ldots,n\}$, the group $M$ can be either intransitive, or transitive but not primitive (that is, imprimitive), or primitive. For the proof of Proposition~\ref{p: alt}, we deal with each of these cases, in order, in the following sections.

\subsection{The group \texorpdfstring{$M$}{M} is intransitive}
If $M$ is intransitive on $\{1,\ldots,n\}$, then $\Omega$ can be identified with the set of $k$-subsets of $\{1,\dots, n\}$ for some $k$ such that $1\leq k< \frac{n}{2}$.

\begin{lem}\label{l: M intrans 1}
Suppose that $2<k<n/2$, and that $\Omega$ is the set of $k$-subsets of $\{1,\ldots,n\}$. Then 
\begin{align*}
\Delta:=\{&\{1,2,3,8,9,\ldots,k+4\},\{3,4,5,8,9,\ldots,k+4\},\{1,5,6,8,9,\ldots,k+4\},\\
&\{1,4,7,8,9,\ldots,k+4\},\{3,6,7,8,9,\ldots,k+4\},\{2,5,7,8,9,\ldots,k+4\},\\
&\{2,4,6,8,9,\ldots,k+4\}\}
\end{align*}
is $G$-beautiful.
\end{lem}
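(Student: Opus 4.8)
The plan is to recognise $\Delta$ as a copy of the seven lines of the Fano plane, each padded out by a common ``tail'' so as to become a $k$-subset of $\{1,\dots,n\}$, and then to show that the setwise stabiliser $G_\Delta$ induces on $\Delta$ exactly the natural action of $\PSL_3(2)\cong\PSL_2(7)$ on the seven lines of that plane. Since this action is $2$-transitive and $\PSL_3(2)$ has order $168$, it is isomorphic to neither $\Alt(7)$ nor $\Sym(7)$; hence $\Delta$ is $G$-beautiful, as required.

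Concretely, set $T:=\{8,9,\dots,k+4\}$ (so $|T|=k-3$; note $T\subseteq\{1,\dots,n\}$ since $k<n/2$, and $T$ is empty precisely when $k=3$), and let $L_1,\dots,L_7$ denote the seven $3$-subsets of $\{1,\dots,7\}$ occurring in the definition of $\Delta$, so that $\Delta=\{L_1\cup T,\dots,L_7\cup T\}$. The first step is the routine verification that $L_1,\dots,L_7$ are the lines of a Fano plane on $\{1,\dots,7\}$: each of the seven points lies on exactly three of the $L_i$, and each of the $21$ pairs of points lies on exactly one of them. Two consequences matter for what follows: the $L_i$ have empty common intersection, so $\bigcap_{\Lambda\in\Delta}\Lambda=T$; and $\bigcup_i L_i=\{1,\dots,7\}$, so $\bigcup_{\Lambda\in\Delta}\Lambda$ is the disjoint union of $\{1,\dots,7\}$ and $T$. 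In particular the seven members of $\Delta$ are pairwise distinct $k$-subsets and $|\Delta|=7$.

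Next I would produce enough of $G^\Delta$. Let $H\le\Sym(n)$ be the group of permutations fixing every point outside $\{1,\dots,7\}$ and inducing on $\{1,\dots,7\}$ a collineation of this Fano plane. Then $H\cong\PGL_3(2)=\PSL_3(2)$; as $\PSL_3(2)$ is non-abelian simple, its (faithful) action on the seven points has image in $\Alt(7)$, and hence $H\le\Alt(n)\le G$. Every element of $H$ fixes $T$ pointwise and permutes $L_1,\dots,L_7$, hence permutes the sets $L_1\cup T,\dots,L_7\cup T$; thus $H\le G_\Delta$, and the group induced by $H$ on $\Delta$ is permutation isomorphic to $\PSL_3(2)$ acting on the seven lines of the Fano plane, which is $2$-transitive. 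Therefore $G^\Delta$ is $2$-transitive on $\Delta$. (Alternatively this step could be phrased via Lemma~\ref{l: 2t}.)

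Finally I would show that $G^\Delta$ is no larger than $\PSL_3(2)$, and this is the step requiring the most care. Let $g\in G_\Delta$. Since $g$ permutes the members of $\Delta$, it fixes setwise both $\bigcap_{\Lambda\in\Delta}\Lambda=T$ and $\bigcup_{\Lambda\in\Delta}\Lambda$, and therefore also their set-theoretic difference $\{1,\dots,7\}$. Writing $g(L_i\cup T)=L_{\sigma(i)}\cup T$ for the permutation $\sigma$ induced on the index set, and using that $g$ maps $\{1,\dots,7\}$ onto itself and $T$ onto itself, intersection of both sides with $\{1,\dots,7\}$ gives $g(L_i)=L_{\sigma(i)}$ for all $i$; thus $g$ restricts on $\{1,\dots,7\}$ to a Fano collineation whose induced action on lines is exactly $\sigma$. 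Consequently the homomorphism $G_\Delta\to\Sym(\Delta)$ factors through the collineation group, so $G^\Delta$ is isomorphic to a subgroup of $\PSL_3(2)$ acting on the seven lines, hence equal to it by the previous paragraph. Being $2$-transitive and isomorphic to $\PSL_2(7)$, which is neither $\Alt(7)$ nor $\Sym(7)$, the group $G^\Delta$ is a proper $2$-transitive subgroup of $\Sym(\Delta)$ isomorphic to neither $\Alt(\Delta)$ nor $\Sym(\Delta)$; that is, $\Delta$ is $G$-beautiful. The one genuine subtlety is the bookkeeping here — ruling out the possibility that $g$ mixes the tail $T$ with the seven ``plane'' points — and this is precisely what the identity $\bigcap_{\Lambda\in\Delta}\Lambda=T$ delivers.
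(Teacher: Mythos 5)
Your proposal is correct and follows essentially the same route as the paper: identify the seven triples as the lines of a Fano plane, use the common tail $T=\bigcap_{\delta\in\Delta}\delta$ (and the union) to force any element of $G_\Delta$ to preserve $\{1,\dots,7\}$ and hence act as a collineation, and conclude $G^\Delta\cong\PGL_3(2)$ in its $2$-transitive action on lines. Your extra care in checking the lower bound (that the collineations lie in $\Alt(n)\le G$) is a detail the paper leaves implicit, but it is not a different argument.
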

\begin{proof}
 We start by observing that $\Delta$ consists of seven sets of size $k$ and that the seven sets 
\begin{equation}\label{eq:0}
\{1,2,3\},\{3,4,5\},\{1,5,6\},\{1,4,7\},\{3,6,7\},\{2,5,7\},\{2,4,6\}
\end{equation} are the lines of a Fano plane with point set $\{1,2,3,4,5,6,7\}$. 

Consider $$X:=\bigcap_{\delta\in\Delta}\delta=\{8,9,\ldots,k+4\},\quad Y:=\{1,\ldots,n\}\setminus\bigcup_{\delta\in \Delta}\delta=\{k+5,k+6,\ldots,n\}$$ and observe that, for every $\sigma\in G_{\Delta}$, $\sigma$ fixes set-wise $X$ and  $Y$. From this it immediately follows that $$G_{\Delta}\leq \Sym(\{1,\ldots,7\})\times \Sym(\{8,\ldots,k+4\})\times \Sym(\{k+5,\ldots,n\})$$ and $G^\Delta\cong \PGL_3(2)$ in its natural $2$-transitive action on the lines of the Fano plane.
\end{proof}

\begin{lem}\label{l: M intrans 2}
Suppose that $\Omega$ is the set of $2$-subsets of $\{1,\dots, n\}$. Then no subset of $\Omega$ is $G$-beautiful.
\end{lem}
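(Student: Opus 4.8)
The plan is to translate the action into graph theory. Identify $\Omega$ with the edge set of the complete graph $K_n$ on the vertex set $\{1,\dots,n\}$; then for any $\Lambda\subseteq\Omega$, writing $\Gamma_\Lambda$ for the graph on $\{1,\dots,n\}$ whose edge set is $\Lambda$, an element $g\in\Sym(n)$ lies in $G_\Lambda$ precisely when $g$ is an automorphism of $\Gamma_\Lambda$. Hence $G_\Lambda=\Aut(\Gamma_\Lambda)\cap G$ and $G^\Lambda$ is the permutation group this induces on $\Lambda$. Assume for contradiction that some $\Lambda$ is $G$-beautiful, so $|\Lambda|\ge 5$ and $G^\Lambda$ is a $2$-transitive subgroup of $\Sym(\Lambda)$ isomorphic to neither $\Alt(\Lambda)$ nor $\Sym(\Lambda)$.

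The first step is to pin down the shape of $\Gamma_\Lambda$. Whether two distinct edges of a graph share a vertex is preserved by every graph automorphism, so the $2$-transitivity of $G^\Lambda$ on $\Lambda$ forces all pairs of distinct edges in $\Lambda$ to have the same incidence type: either every two edges of $\Lambda$ are disjoint, or every two edges of $\Lambda$ meet. In the first case $\Gamma_\Lambda$ is a matching. In the second, an elementary argument disposes of everything else: if $e,e'$ meet at a vertex $a$ and some third edge avoids $a$, one is forced into a triangle, and the triangle then absorbs every further edge. So $\Gamma_\Lambda$ is a triangle or a star $K_{1,m}$; the triangle has only $3<5$ edges and is excluded. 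Thus either $\Gamma_\Lambda$ is a matching with $m:=|\Lambda|\ge 5$ edges, or $\Gamma_\Lambda=K_{1,m}$ with $m=|\Lambda|\ge 5$.

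In each case I would compute $\Aut(\Gamma_\Lambda)$ and its induced action on $\Lambda$, reaching the contradiction $G^\Lambda\in\{\Alt(\Lambda),\Sym(\Lambda)\}$. If $\Gamma_\Lambda$ is a matching of $m$ edges, then $\Aut(\Gamma_\Lambda)=(C_2\wr\Sym(m))\times\Sym(n-2m)$ and the action on the $m$ edges is all of $\Sym(\Lambda)$; moreover any permutation of the edges is induced by the permutation of $\{1,\dots,n\}$ that permutes the edges ``as blocks'', acting as the identity inside each block and on the remaining vertices, and such a permutation is always even (a cycle on $\ell$ blocks is a product of two $\ell$-cycles). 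Hence $G^\Lambda=\Sym(\Lambda)$ whether $G=\Sym(n)$ or $G=\Alt(n)$. If $\Gamma_\Lambda=K_{1,m}$, its centre $c$ is the unique vertex of degree $\ge 2$ and so is fixed by every automorphism; thus $\Aut(\Gamma_\Lambda)=\Sym(L)\times\Sym(I)$, where $L$ is the set of $m$ leaves and $I$ the set of $n-m-1$ isolated vertices, and the induced action on $\Lambda$ (via the bijection edges $\leftrightarrow$ leaves) is $\Sym(L)\cong\Sym(\Lambda)$. For $G=\Sym(n)$ this is already $\Sym(\Lambda)$. For $G=\Alt(n)$: if $|I|\ge 2$ one corrects parity by a transposition of two isolated vertices, giving $G^\Lambda=\Sym(\Lambda)$; if $|I|\le 1$ then $G_\Lambda$ acts on $L$ as $\Alt(L)$, so $G^\Lambda=\Alt(\Lambda)$. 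In every case $G^\Lambda$ is $\Alt(\Lambda)$ or $\Sym(\Lambda)$, contradicting beauty.

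The argument is essentially routine; the only points that need care are the elementary classification of graphs in which all edges pairwise meet (star or triangle) and, for $G=\Alt(n)$, the parity bookkeeping in the star case, where one must keep track of the number of isolated vertices to confirm that the induced group on $\Lambda$ is still exactly $\Alt(\Lambda)$ and not some smaller group.
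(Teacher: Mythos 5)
Your proof is correct and follows essentially the same route as the paper's: identify $\Lambda$ with the edge set of a graph, use $2$-transitivity of $G^\Lambda$ to force all pairs of edges to have the same incidence type, reduce to a matching or a star (the triangle being too small), and conclude $G^\Lambda\supseteq\Alt(\Lambda)$. The only difference is that you spell out the parity bookkeeping for $G=\Alt(n)$ that the paper compresses into ``it easily follows''; this is a welcome (and correct) elaboration but not a different argument.
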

\begin{proof}
We argue by contradiction and we assume that  $\Delta$ is  a beautiful subset of $\Omega$: in particular $\Delta\neq \emptyset$ and $\Delta\neq \Omega$. We can think of $\Delta$ as the edge set of a graph with vertex set $\{1,\ldots,n\}$: let $\Gamma$ be this graph.  Then $G_{\Delta}=\Aut(\Gamma)$ and, as $\Delta$ is a beautiful subset,  $\Aut(\Gamma)$ acts $2$-transitively on the edges of $\Gamma$. (As we noticed above, $\Gamma$ is neither empty nor complete.) 

If $\Gamma$ has two distinct edges with no vertex in common, then, by the $2$-transitivity of $\Aut(\Gamma)$ on edges, we infer that any two distinct edges of $\Gamma$ have no vertex in common. Therefore $\Gamma$ is a union of parallel edges and it easily follows that $G^\Delta\ge \Alt(\Delta)$, a contradiction.

Suppose that $\Gamma$ has two distinct edges with one vertex in common. Then, by the $2$-transitivity of $\Aut(\Gamma)$ on edges, we infer that any two distinct edges of $\Gamma$ have a vertex in common. Recall that we may assume that $n\ge 5$. A moment's thought (or by applying the Erd\H{o}s-Ko-Rado theorem~\cite{MR0140419}) gives that all edges of $\Gamma$ are adjacent to a fixed element of $\{1,\ldots,n\}$. Therefore $G^\Delta\geq \Alt(\Delta)$, a contradiction.
\end{proof}

\subsection{The group \texorpdfstring{$M$}{M} is transitive but not primitive} Here we suppose that $M$ is transitive but not primitive. In this case, $\Omega$ can be identified with the set of partitions of $\{1,\ldots, n\}$ into $\ell$ subsets, each of size $k$; in particular, $n=k\ell$ with $k,\ell>1$. We adopt this notation for the next three results.

\begin{lem}\label{l: M imprim 1}
 If $n=8$ and $k=2$, then the action of $G$ admits a beautiful subset of size $7$. If $n\geq 10$ and $k=2$, then the action of $G$ admits a beautiful subset of size $6$. 
\end{lem}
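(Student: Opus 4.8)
The plan is to exhibit explicit beautiful subsets $\Delta$ of the partition space whose set-wise stabilizer induces a known $2$-transitive group that is neither alternating nor symmetric; the natural choice in both cases is $\PGL_3(2)$ acting on the seven points (or lines) of the Fano plane, since $7$ is the target size in both statements and $\PGL_3(2)$ is sharply $2$-transitive-free but genuinely $2$-transitive. So I would take $K=\PGL_3(2)$ in its $2$-transitive degree-$7$ action and build, via Lemma~\ref{l: 2t} and Lemma~\ref{l: stabs}, a subgroup $H\leq G$ together with a point $\omega\in\Omega$ (a partition) for which $\Lambda=\omega^H$ has size $7$ and $H$ maps isomorphically onto $K$ with the right stabilizer correspondence, so that $H^\Lambda\cong\PGL_3(2)$; then I must check that $G^\Lambda$ is no bigger than some $2$-transitive overgroup of $\PGL_3(2)$ of degree $7$, and the only such overgroups inside $\symme(7)$ are $\PGL_3(2)$ itself and $\alter(7),\symme(7)$, so it suffices to rule out $G^\Lambda\supseteq\alter(7)$ (equivalently, to produce a nontrivial element of $\symme(7)\setminus\PGL_3(2)$ that is \emph{not} realised, or simply to pin down $G_\Lambda$ exactly).

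Concretely, for the case $k=2$, a partition of $\{1,\dots,n\}$ into pairs is a perfect matching of $K_n$; I would fix the partition $\omega$ that contains a ``core'' of pairs $\{8,9\},\{10,11\},\dots$ on the points $\{8,\dots,n\}$ together with, on the first seven points, some fixed completion — but since $7$ is odd one instead works with pairs among an $8$-point block $\{1,\dots,8\}$: let the Fano plane have point set $\{1,\dots,7\}$, adjoin the extra point $8$, and for each line $\ell=\{a,b,c\}$ of the Fano plane let $m_\ell$ be the perfect matching of $\{1,\dots,8\}$ whose pairs are determined combinatorially by $\ell$ (for instance pair $8$ with the point of $\ell$ of smallest label and pair up the remaining four points of $\{1,\dots,7\}\setminus\ell$ and the two leftover points of $\ell$ in a fixed way); completing each $m_\ell$ by the fixed core matching on $\{9,\dots,n\}$ (when $n\geq 10$) gives seven partitions forming $\Delta$. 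One then checks that $\bigcap_{\delta\in\Delta}\delta$ consists of the core pairs and that $G_\Delta$ stabilises $\{1,\dots,8\}$ set-wise, so $G_\Delta$ embeds in $\symme(8)\times\Sym(\{9,\dots,n\})$; the induced action on $\Delta$ is then the action of $\Aut(\text{Fano})=\PGL_3(2)$ permuting these seven matchings, which is the $2$-transitive line-action. The $n=8$ case is the same construction with the core matching empty.

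The main obstacle — and the step needing genuine care rather than bookkeeping — is verifying that no ``accidental'' extra symmetry enlarges $G^\Delta$ beyond $\PGL_3(2)$: a priori a permutation of $\{1,\dots,8\}$ could permute the seven matchings $m_\ell$ in a way not induced by a collineation of the Fano plane (e.g. realising a transposition of two of them), which would force $G^\Delta\geq\alter(7)$ and destroy the argument. I would handle this by showing that the stabiliser in $\symme(8)$ of the unordered set $\{m_{\ell_1},\dots,m_{\ell_7}\}$ is exactly the copy of $\PGL_3(2)$ we started with — this is where the precise combinatorial recipe defining $m_\ell$ from $\ell$ matters, and one wants the recipe chosen so that $\ell$ is \emph{recoverable} from $m_\ell$ (so the $\symme(8)$-action on the $m_\ell$ factors through the $\symme(8)$-action on Fano lines, which is $\PGL_3(2)$). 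Concretely one shows the map $\ell\mapsto m_\ell$ is injective and equivariant, and that the only perfect matchings of $\{1,\dots,8\}$ in the $\symme(8)$-orbit of $m_{\ell}$ appearing are the $m_\ell$ themselves, so that any $\sigma\in\symme(8)$ permuting $\Delta$ permutes the lines, hence lies in $\PGL_3(2)$. Granting this, $G^\Delta\cong\PGL_3(2)$, which is $2$-transitive of degree $7$ and isomorphic to neither $\alter(7)$ nor $\symme(7)$, so $\Delta$ is $G$-beautiful of size $7$. For the ``$n\geq 10$'' half one would then, if a size-$6$ rather than size-$7$ set is wanted, replace the Fano construction by an analogous one of size $6$ — e.g. realising $\PGL_2(5)\cong\PSL_2(5)$ of degree $6$, or more simply $S_5$ acting $2$-transitively on its $6$ cosets of $\AGL_1(5)$ — built on pairs among a suitable small block; the verification pattern is identical, with the $2$-transitive target now being a sharply-$2$-transitive-free degree-$6$ group distinct from $\alter(6),\symme(6)$.
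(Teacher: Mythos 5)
Your overall strategy (exhibit seven perfect matchings of an $8$-point block permuted $2$-transitively by $\PGL_3(2)$, pad with a fixed ``core'' matching on $\{9,\dots,n\}$) is genuinely different from the paper's, which disposes of $n=8$ by a \texttt{magma} computation and, for $n\ge 10$, uses the six complete matchings of the Petersen graph on $\{1,\dots,10\}$, on which $\Aut(\mathcal{P})\cong\Sym(5)$ acts as $\PGL_2(5)$ in its $2$-transitive action of degree $6$. Your route is viable in principle, but as written it has a genuine gap at exactly the point you flag as delicate: the assignment $\ell\mapsto m_\ell$. For $\Delta=\{m_\ell\}$ to be $\PGL_3(2)$-invariant with the induced action matching the line action, you need $m_{\ell^g}=m_\ell^{\,g}$, hence $\stab(\ell)\le\stab(m_\ell)$. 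But the line stabilizer in $\PGL_3(2)$ (order $24$) has orbits of sizes $3$ and $4$ on $\{1,\dots,7\}$ and fixes no point there, so the mate of the auxiliary point $8$ in $m_\ell$ cannot be chosen invariantly: \emph{no} equivariant injective map from lines to perfect matchings of $\{1,\dots,8\}$ exists. Your concrete recipe (``pair $8$ with the point of $\ell$ of smallest label, pair up the rest in a fixed way'') is label-dependent and not equivariant, so the set $\{m_\ell\}$ it produces is not stabilized by $\PGL_3(2)$ at all, and the argument does not get off the ground.

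The construction can be repaired by indexing by \emph{points} instead of lines: for each point $P$ of the Fano plane set $m_P=\{\{8,P\}\}\cup\{\ell\setminus\{P\}\mid \ell\ni P\}$. Here $\stab(P)$ does preserve $m_P$ (its orbits on $\{1,\dots,7\}\setminus\{P\}$ are exactly the three pairs $\ell\setminus\{P\}$), the map is equivariant and injective, and the seven $m_P$ form the well-known $1$-factorization of $K_8$ obtained by identifying $\{1,\dots,8\}$ with $\mathbb{F}_2^3$ and taking the factors $\{\{x,x+v\}\mid x\}$ for $v\ne 0$; its full stabilizer in $\Sym(8)$ is $\AGL_3(2)$, which induces precisely $\GL_3(2)\cong\PGL_3(2)$ on the seven factors, so $G^\Delta$ is $2$-transitive of degree $7$ and contains no copy of $\Alt(7)$. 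With this fix, and after checking that $G_\Delta$ preserves the block $\{1,\dots,8\}$ and the core (which follows since the core pairs are $\bigcap_{\delta\in\Delta}\delta$), your argument goes through; note though that it produces a beautiful subset of size $7$ for all $n\ge 8$ rather than the size $6$ asserted in the statement for $n\ge 10$ (harmless for the application to Proposition~\ref{p: alt}, but not literally the claim). Two small slips elsewhere: $\PGL_2(5)\not\cong\PSL_2(5)$ (they are $\Sym(5)$ and $\Alt(5)$ respectively), and your degree-$6$ alternative is exactly the paper's Petersen-graph construction, which you would still need to realize explicitly on matchings.
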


Note that when $n=6$, the action on partitions with $k=2$ is isomorphic to the action of $G$ on $2$-subsets of $\{1,\dots, n\}$. Hence the action admits no beautiful subset, see Lemma~\ref{l: M intrans 2}.

\begin{proof}
It is an easy computation with \texttt{magma} to confirm that $G$ has a beautiful subset of size $7$ when $n=8$.   Assume then that $n/k\geq 5$. 

Consider the Petersen graph $\mathcal{P}$ with the labeling as in Figure~\ref{figure2}. We recall that the automorphism group of $\mathcal{P}$ is isomorphic to $\Sym(5)$, where the action of $\Aut(\mathcal{P})$ on the vertices of $\mathcal{P}$ is permutation isomorphic to the natural action of $\Sym(5)$ on the $2$-subsets of $\{1,2,3,4,5\}$. Observe also that $\mathcal{P}$ has six complete matchings and that the action $\Aut(\mathcal{P})$ on these six complete matchings is permutation isomorphic to the natural $2$-transitive action of $\PGL_2(5)\cong\Sym(5)$ on the six points of the projective line. These six complete matchings are:
\begin{align}\label{eq:1}
\delta_1:=\{\{1,6\},\{2,7\},\{3,8\},\{4,9\},\{5,10\}\},\\\nonumber
\delta_2:=\{\{1,2\},\{3,8\},\{4,5\},\{6,9\},\{7,10\}\},\\\nonumber
\delta_3:=\{\{1,5\},\{2,3\},\{4,9\},\{6,8\},\{7,10\}\},\\\nonumber
\delta_4:=\{\{1,6\},\{2,3\},\{4,5\},\{7,9\},\{8,10\}\},\\\nonumber
\delta_5:=\{\{1,2\},\{3,4\},\{5,10\},\{6,8\},\{7,9\}\},\\\nonumber
\delta_6:=\{\{1,5\},\{2,7\},\{3,4\},\{6,9\},\{8,10\}\}.\nonumber
\end{align}

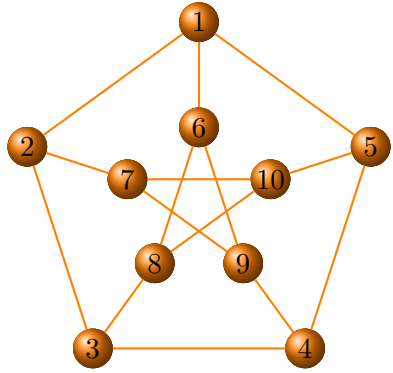
\begin{figure}[!ht]
\begin{tikzpicture}[rotate=90]
  \GraphInit[vstyle=Art]
  \SetVertexNoLabel
  \SetUpVertex[MinSize=15pt]
  \grPetersen[RA=2.4,RB=1]
  \AssignVertexLabel{a}{1,2,3,4,5} \AssignVertexLabel{b}{6,7,8,9,10}
\end{tikzpicture}
\caption{Labeling for the Petersen graph $\mathcal{P}$}\label{figure2}
\end{figure}

An easy computation shows that the only elements of $\Sym(10)$ fixing set-wise $\{\delta_1,\delta_2,\delta_3,\delta_4,\delta_5,\delta_6\}$ are the elements of $\Aut(\mathcal{P})$.

Using the complete matchings of $\mathcal{P}$, we construct a subset $\Delta$ of $\Omega$ of size six.
 Let $X_1,\ldots,X_{n/k-5}$ be a partition of $\{11,\ldots,n\}$ with $|X_i|=k$ for every $i\in \{1,\ldots,n/k-5\}$. Now define
\begin{align*}
\Delta:=\{&\{
\{1,6\},\{2,7\},\{3,8\},\{4,9\},\{5,10\},X_1,\ldots,X_{n/k-5}\},\\
&\{\{1,2\},\{3,8\},\{4,5\},\{6,9\},\{7,10\},X_1,\ldots,X_{n/k-5}\},\\
&\{\{1,5\},\{2,3\},\{4,9\},\{6,8\},\{7,10\},X_1,\ldots,X_{n/k-5}\},\\
&\{\{1,6\},\{2,3\},\{4,5\},\{7,9\},\{8,10\},X_1,\ldots,X_{n/k-5}\},\\
&\{\{1,2\},\{3,4\},\{5,10\},\{6,8\},\{7,9\},X_1,\ldots,X_{n/k-5}\},\\
&\{\{1,5\},\{2,7\},\{3,4\},\{6,9\},\{8,10\},X_1,\ldots,X_{n/k-5}\}   \}.
\end{align*}
From the discussion above, $G^{\Delta}$ is permutation isomorphic to $\PGL_2(5)$ in its natural $2$-transitive action of degree $6$ and hence $\Delta$ is a beautiful subset.
\end{proof}

\begin{lem}\label{l: M imprim 2}
 If $k\geq 4$, then the action of $G$ admits a beautiful subset of size $7$.
\end{lem}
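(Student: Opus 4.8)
The statement to prove is Lemma~\ref{l: M imprim 2}: when $\Omega$ is the set of partitions of $\{1,\ldots,n\}$ into $\ell$ blocks of size $k$ (so $n=k\ell$, $k,\ell>1$) and $k\geq 4$, the action of $G=\Alt(n)$ or $\Sym(n)$ on $\Omega$ admits a $G$-beautiful subset of size $7$.

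The plan is to mimic the strategy of Lemma~\ref{l: M intrans 1}: exhibit seven partitions whose set-wise stabilizer in $G$ induces $\PGL_3(2)$ acting $2$-transitively on the seven lines of a Fano plane. The idea is to encode the Fano plane \emph{inside one block} rather than across the ground set. Fix a block structure in which one designated block $B$ of size $k\geq 4$ will be allowed to vary, while all remaining $\ell-1$ blocks are held fixed throughout. Inside $B$, reserve seven points, say $\{1,\ldots,7\}$, to carry the Fano plane, and fill $B$ out to size $k$ with a fixed ``padding'' set $P$ of size $k-7$ if $k>7$ (if $k<7$ we instead distribute the seven Fano points among two or three blocks, as in Lemma~\ref{l: M intrans 1}, together with fixed padding — the case $4\le k\le 6$ needs this minor variant). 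More precisely, take the seven $3$-subsets of $\{1,\ldots,7\}$ forming the lines of a Fano plane as in~\eqref{eq:0}, and for each such line $L$ let $\delta_L$ be the partition of $\{1,\ldots,n\}$ whose blocks are: the block $L\cup(\text{complement of }L\text{ in }\{1,\ldots,7\}\text{'s leftover points})$... — more cleanly, one splits $\{1,\ldots,7\}$ plus some fixed extra points into the two (or more) blocks it must occupy so that exactly one of those blocks "sees" $L$ as a distinguished triple, and all other blocks are the fixed ones. I would set this up so that the only data distinguishing the seven partitions $\delta_{L_1},\ldots,\delta_{L_7}$ is which Fano line appears, and let $\Delta=\{\delta_{L_1},\ldots,\delta_{L_7}\}$.

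The key steps, in order, are then: (i) write down explicitly the seven partitions, making the padding and the placement of $\{1,\ldots,7\}$ precise for each range of $k$ (one clean construction for $k\ge 7$, a Lemma~\ref{l: M intrans 1}-style variant for $k\in\{4,5,6\}$); (ii) compute $G_\Delta$: argue, as in Lemma~\ref{l: M intrans 1}, that any $\sigma\in G_\Delta$ must fix set-wise the set $\{1,\ldots,7\}$ (because it is the union, over $\delta\in\Delta$, of the "varying parts" of the blocks — the points that do not lie in a common fixed block), must fix set-wise every padding set and every untouched block, and hence lies in $\Sym(\{1,\ldots,7\})\times(\text{product of symmetric groups on the frozen parts})$; (iii) deduce that the action of $G_\Delta$ on $\Delta$ factors through the action of $\Sym(7)$ on the seven Fano lines, whose image is $\PGL_3(2)$, and conversely that every element of $\PGL_3(2)\le\Sym(7)$, viewed as an even permutation of $\{1,\ldots,n\}$ (note $\PGL_3(2)\le\Alt(7)$, so this works for $G=\Alt(n)$ too), does stabilise $\Delta$ and realises the corresponding permutation of the lines; (iv) conclude $G^\Delta\cong\PGL_3(2)$, which is $2$-transitive on $7$ points and isomorphic to neither $\Alt(7)$ nor $\Sym(7)$, so $\Delta$ is $G$-beautiful.

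The main obstacle is step (ii): one must be careful that the set-wise stabilizer of $\Delta$ cannot do something unexpected — in particular that no element of $G_\Delta$ can permute the Fano points in a way outside $\PGL_3(2)$, and that no element can mix the reserved points $\{1,\ldots,7\}$ with padding points or with points of the frozen blocks. This is exactly the same phenomenon controlled in Lemma~\ref{l: M intrans 1} via the invariants $X=\bigcap\delta$ and $Y=\Omega\setminus\bigcup\delta$; here the analogous invariants are the collection of blocks common to all $\delta\in\Delta$ and the set $\{1,\ldots,7\}$ of points whose block-membership is \emph{not} constant across $\Delta$. Once one checks that $\{1,\ldots,7\}$ is genuinely the set of non-constant points (which forces choosing the padding and the auxiliary block placements so that only the Fano triple varies, and no padding point ever changes block), the rest is the routine identification $\{$Fano-line-preserving permutations of $7$ points$\}=\PGL_3(2)$ together with the observation $\PGL_3(2)\le\Alt(7)$. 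A secondary, purely bookkeeping obstacle is the small-$k$ range $k\in\{4,5,6\}$, where $7>k$ forces the Fano points to straddle two or three blocks; here I would either run the explicit Lemma~\ref{l: M intrans 1}-style construction directly, or simply note that $k\in\{4,5,6\}$ with $\ell\ge 2$ gives $n\le 18$ boundedly many cases and dispatch them with \texttt{magma} as was done for $n=8$, $k=2$ in Lemma~\ref{l: M imprim 1}.
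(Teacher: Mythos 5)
Your core idea --- index seven partitions by the lines of the Fano plane and recognise $\PGL_3(2)$ in the set-wise stabiliser --- is exactly what the paper does, and the version of the construction you eventually arrive at (the line $L$ in one block, the four points of $\{1,\dots,7\}\setminus L$ in a second block, fixed padding, all other blocks frozen) is the paper's. But as written the proposal has two concrete problems. First, the ``encode the Fano plane inside one block $B$'' version is vacuous: a partition records only which points share a block, not a distinguished triple inside a block, so if all of $\{1,\dots,7\}$ sit in $B$ in every $\delta_L$ then the seven partitions coincide. The viable construction is forced to be the $3{+}4$ split across two varying blocks (note also that you cannot have exactly one block vary while $\ell-1$ are frozen), and once you adopt it there is nothing special about $k\in\{4,5,6\}$: the block receiving $\{1,\dots,7\}\setminus L$ needs size at least $4$, which is precisely the hypothesis $k\ge 4$, so one construction covers every case. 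Your fallback for $4\le k\le 6$ --- ``boundedly many cases, $n\le 18$'' --- is false in any event, since $\ell=n/k$ is unbounded.

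Second, the invariant in your step (ii) does not do what you claim. The padding points in $\{8,\dots,k+4\}$ and $\{k+5,\dots,2k\}$ also fail to ``lie in a common fixed block'': the block containing them changes with $L$, and only its intersection with the padding is constant. So ``the set of points not lying in a common fixed block'' is $\{1,\dots,2k\}$, not $\{1,\dots,7\}$. The finer invariant $\bigcap_{\delta\in\Delta}(\text{block of }\delta\text{ containing }j)=\{j\}$ does isolate $\{1,\dots,7\}$ when $k\ge 6$, but it picks up the stray padding point $8$ when $k=4$ (there $\{8,\dots,k+4\}=\{8\}$) and the point $k+5$ when $k=5$. This is exactly why the paper does not attempt to compute $G_\Delta$ outright: it notes $\PGL_3(2)\le G^\Delta$, assumes for contradiction that $G^\Delta\ge\Alt(\Delta)$, takes $\sigma$ fixing $\delta_1,\dots,\delta_4$ and $3$-cycling $\delta_5,\delta_6,\delta_7$, and forces $\sigma$ (after multiplying by an element acting trivially on $\Delta$) to be an automorphism of the Fano plane fixing four lines, hence the identity. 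Your outline can be repaired along those lines, but the set-wise invariance of $\{1,\dots,7\}$ is where the real argument lives, and the justification you give for it is not correct.
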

\begin{proof}
Let  $X_1,\ldots,X_{n/k-2}$ be a partition of $\{2k+1,2k+2,\ldots,n\}$ with $|X_i|=k$, for every $i\in \{1,\ldots,n/k-2\}$. Using the seven lines of the Fano plane in Eq.~\eqref{eq:0}, we construct a subset $\Delta$ of $\Omega$ of size $7$:
\begin{align*}
\Delta:=\{&\delta_1:=\{
\{1,2,3\}\cup \{8,\ldots,k+4\},\{4,5,6,7\}\cup \{k+5,k+6,\ldots,2k\},X_1,\ldots,X_{n/k-2}\},\\
&\delta_2:=\{\{3,4,5\}\cup \{8,\ldots,k+4\},\{1,2,6,7\}\cup \{k+5,k+6,\ldots,2k\},X_1,\ldots,X_{n/k-2}\},\\
&\delta_3:=\{\{1,5,6\}\cup \{8,\ldots,k+4\},\{2,3,4,7\}\cup \{k+5,k+6,\ldots,2k\},X_1,\ldots,X_{n/k-2}\},\\
&\delta_4:=\{\{1,4,7\}\cup \{8,\ldots,k+4\},\{2,3,5,6\}\cup \{k+5,k+6,\ldots,2k\},X_1,\ldots,X_{n/k-2}\},\\
&\delta_5:=\{\{3,6,7\}\cup \{8,\ldots,k+4\},\{1,2,4,5\}\cup \{k+5,k+6,\ldots,2k\},X_1,\ldots,X_{n/k-2}\},\\
&\delta_6:=\{\{2,5,7\}\cup \{8,\ldots,k+4\},\{1,3,4,6\}\cup \{k+5,k+6,\ldots,2k\},X_1,\ldots,X_{n/k-2}\},\\
&\delta_7:=\{\{2,4,6\}\cup \{8,\ldots,k+4\},\{1,3,5,7\}\cup \{k+5,k+6,\ldots,2k\},X_1,\ldots,X_{n/k-2}\}
\}.
\end{align*}
(Observe that the hypothesis $k\ge 4$ guarantees that $\Delta$ consists of partitions of $\{1,\ldots,n\}$ into $n/k$ parts of size exactly $k$.)

By construction, we immediately see that $G^\Delta\geq \PGL_3(2)$ with its natural action on the seven lines of the Fano plane. Suppose that $G^\Delta\geq \Alt(\Delta)$. In particular, $G_{\Delta}$ contains a permutation $\sigma$ fixing $\delta_1,\delta_2,\delta_3,\delta_4$ and permuting cyclically $\delta_5,\delta_6,\delta_7$. 

Observe that $$\{X_1,\ldots,X_{n/k-2}\}=\bigcap_{\delta\in \Delta}\delta.$$
As $\sigma$ fixes set-wise $\Delta$, we deduce that $X_i^\sigma\in \{X_1,\ldots,X_{n/k-2}\}$, for every $i\in \{1,\ldots,n/k-2\}$. In particular, without loss of generality we may assume that $\sigma$ fixes point-wise $\{2k+1,\ldots,n\}$. Now, let $i,j\in \{1,2,3,4\}$ with $i\ne j$. If $\sigma$ interchanges the first two parts of $\delta_i$, then $\sigma$ does not fix $\delta_j$, contradicting the fact that $\sigma$ fixes the partitions $\delta_i$ and $\delta_j$. This shows that $\sigma$ fixes set-wise the first two parts of the partitions $\delta_1,\delta_2,\delta_3,\delta_4$. In particular, without loss of generality we may assume that $\sigma$ fixes point-wise $\{8,\ldots,k+4\}$ and $\{k+5,\ldots,2k\}$. Therefore $\sigma$ fixes point-wise $\{8,9,\ldots,n\}$. As $\sigma\in G_{\Delta}$, we now deduce that the seven lines of the Fano plane are  $\sigma$-invariant and hence $\sigma$ is an automorphism of the Fano plane. 

Since $\sigma$ fixes $\delta_1,\delta_2,\delta_3,\delta_4$, we get that $\sigma$ fixes four lines of the Fano plane. A computation yields that the only automorphism of the Fano plane fixing four lines is the identity, and hence $\sigma=1$, a contradiction. Therefore $G^{\Delta}\ngeq \Alt(\Delta)$.
\end{proof}

\begin{lem}\label{l: M imprim 3}
 If $k=3$, then the action of $G$ admits a beautiful subset of size $10$.
\end{lem}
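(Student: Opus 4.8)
The plan is to mimic the strategy of Lemma~\ref{l: M imprim 2}, but now using a $2$-transitive configuration of size $10$ rather than the Fano plane of size $7$; the reason we must change the configuration is that the only parts available have size $3$, so we cannot split an $n$-point block into two pieces of sizes $3$ and $4$ in the way that made the Fano construction work when $k\ge 4$. First I would fix a $2$-transitive action of degree $10$ that can be realized inside $\Sym(\{1,\dots,m\})$ for a small $m$ with $3\mid m$: the natural candidate is the action of $\PSL_2(9)\cong\Alt(6)$ (or $\PGL_2(9)$, or $M_{10}$) on $10$ points — equivalently $\PSL_2(9)$ acting $2$-transitively on the $10$ cosets of a Borel — but it is cleaner to take the action of $\mathrm{P\Gamma L}_2(9)$ or simply $\Alt(6)$ on the $10$ points and to \emph{encode} those $10$ points as $10$ partitions of a fixed set of $\le 3\cdot(\text{small})$ points into parts of size $3$. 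Concretely, I would identify the $10$ objects with a combinatorial configuration on a ground set of size divisible by $3$ — for instance the $10$ "synthemes" or a suitable orbit of $3$-subsets — chosen so that the setwise stabilizer in $\Sym$ of the configuration is exactly the $2$-transitive group of degree $10$, not all of $\Sym(10)$ or $\Alt(10)$.

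Granting such a configuration, living on a ground set $\{1,\dots,m\}$ with $3\mid m$ and $m\le n$, I would then pad: let $X_1,\dots,X_{(n-m)/3}$ be a fixed partition of $\{m+1,\dots,n\}$ into triples, and for each of the $10$ configuration-objects $\delta_i$ form the partition $\widehat\delta_i$ of $\{1,\dots,n\}$ consisting of the triples of $\delta_i$ together with $X_1,\dots,X_{(n-m)/3}$. Set $\Delta=\{\widehat\delta_1,\dots,\widehat\delta_{10}\}\subseteq\Omega$. Exactly as in Lemma~\ref{l: M imprim 2}, any $\sigma\in G_\Delta$ must preserve $\bigcap_{\delta\in\Delta}\widehat\delta=\{X_1,\dots,X_{(n-m)/3}\}$, and after composing with an element stabilising each $\widehat\delta_i$ we may assume $\sigma$ fixes $\{m+1,\dots,n\}$ pointwise; hence $\sigma$ restricts to a permutation of $\{1,\dots,m\}$ that stabilises the configuration, so $G^\Delta$ is (a quotient of) the degree-$10$ $2$-transitive group we started with, in its natural action on the $10$ objects. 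Since that group is $\PSL_2(9)$ or one of its overgroups inside $\Sym(10)$ — none of which contains $\Alt(10)$ — we get $\Alt(\Delta)\nleq G^\Delta$ while $G^\Delta$ is $2$-transitive, so $\Delta$ is $G$-beautiful.

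There are two places where care is needed, and the second is the genuine obstacle. The minor point is the lower bound on $n$: since $n=3\ell$ with $\ell>1$ and we need $m\le n$, I must check the finitely many small cases $n=6,9,\dots$ up to wherever the padded construction starts to apply — the cases $n=6$ (where $k=3,\ell=2$) and $n=9$ ($\ell=3$) should be disposed of by a direct \texttt{magma} computation, exactly as small cases are handled in Lemmas~\ref{l: M imprim 1} and~\ref{l: M imprim 2}. The real work is the first point: I must exhibit an \emph{explicit} list of $10$ triples (on as few points as possible, with that number divisible by $3$) whose setwise stabiliser in the symmetric group induces precisely a $2$-transitive group of degree $10$ not containing $\Alt(10)$, and then verify that claim. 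The natural source is the outer structure of $\Alt(6)$: the $10$ points of the $\PSL_2(9)$-action correspond bijectively to, say, the $10$ partitions of $\{1,\dots,6\}$ into two triples, and the stabiliser of this set of $10$ partitions inside $\Sym(6)$ is $\Sym(6)$ acting as $\mathrm{PGL}_2(9)$ on $10$ points — but $\{1,\dots,6\}$ is not a multiple of $3$ in the sense we need for each object to be a \emph{union of disjoint triples covering the padding ground set}, so I would instead take the ground set to be $\{1,\dots,6\}$ itself with each "object" an (unordered) partition into two triples: that is already a partition of a $6$-set into $3$-parts, i.e.\ a point of $\Omega$ when $n=6$, and for general $n$ we pad with $X_1,\dots,X_{(n-6)/3}$. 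Thus $m=6$, the "configuration" is the set of $10$ bipartitions of $\{1,\dots,6\}$ into triples, its stabiliser in $\Sym(6)$ is all of $\Sym(6)$, acting $2$-transitively of degree $10$ with image $\mathrm{PGL}_2(9)\not\geq\Alt(10)$, and the argument above goes through verbatim. Writing this out, and double-checking with \texttt{magma} that the $10$ bipartitions have the claimed stabiliser and that $\Sym(6)\to\Sym(10)$ has the claimed $2$-transitive image, is the main thing to carry out.
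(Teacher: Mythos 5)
Your proposal is correct and is essentially the paper's own argument: the authors use exactly the ten bipartitions of $\{1,\dots,6\}$ into two triples, padded by a fixed partition $X_1,\dots,X_{n/3-2}$ of $\{7,\dots,n\}$, and deduce (arguing as in the $k\ge 4$ case) that $G^\Delta$ is $\Sym(6)$ or $\Alt(6)$ in its $2$-transitive degree-$10$ action. The only small inaccuracy is the identification of the induced group: $\Sym(6)$ on the ten bipartitions is the extension of $\PSL_2(9)$ by the field automorphism, not $\PGL_2(9)$ — but this does not matter, since all that is needed is $2$-transitivity and that the group does not contain $\Alt(10)$; also, no separate \texttt{magma} check is needed for $n=6,9$, as the construction already covers them.
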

\begin{proof}
Recall that when $n=6$, there are $10$ partitions of $\{1,2,3,4,5,6\}$ into two parts of size three, the action of $G$ on this set $\Omega$ is $2$-transitive of degree $10$ and $G\ngeq \Alt(\Omega)$; hence $\Omega$ is beautiful. In fact, the action of $\Alt(6)\cong \PSL_2(9)$ on $\Omega$ is permutation isomorphic to the natural action of $\PSL_2(9)$ on the points of the projective line. 

Let $X_1,\ldots,X_{n/k-2}$ be a partition of $\{7,8,\ldots,n\}$ with $|X_i|=3$, for every $i\in \{1,\ldots,n/3-2\}$. Using the ten $3$-uniform partitions of $\{1,\ldots,6\}$ we construct a subset $\Delta$ of $\Omega$ of size $10$:
\begin{align*}
\Delta:=\{&\{
\{1,2,3\},\{4,5,6\},X_1,\ldots,X_{n/3-2}\},\,\{\{1,2,4\},\{3,5,6\},X_1,\ldots,X_{n/3-2}\},\\
&\{\{1,2,5\},\{3,4,6\},X_1,\ldots,X_{n/3-2}\},\,\{\{1,2,6\},\{3,4,5\},X_1,\ldots,X_{n/3-2}\},\\
&\{\{1,3,4\},\{2,5,6\},X_1,\ldots,X_{n/3-2}\},\,\{\{1,3,5\},\{2,4,6\},X_1,\ldots,X_{n/3-2}\},\\
&\{\{1,3,6\},\{2,4,5\},X_1,\ldots,X_{n/3-2}\},\,\{\{1,4,5\},\{2,3,6\},X_1,\ldots,X_{n/3-2}\},\\
&\{\{1,4,6\},\{2,3,5\},X_1,\ldots,X_{n/3-2}\},\,\{\{1,5,6\},\{2,3,4\},X_1,\ldots,X_{n/3-2}\}
\}.
\end{align*}
It is readily seen, arguing as in the proof of Lemma~\ref{l: M imprim 2}, that $G^\Delta$ is permutation isomorphic to  $\Sym(6)$ or $\Alt(6)$ in its $2$-transitive action of degree $10$, and hence $\Delta$ is beautiful.
\end{proof}

\subsection{The group \texorpdfstring{$M$}{M} is primitive. }Here we consider those maximal subgroups $M$ that act primitively on $\{1,\dots, n\}$. The O'Nan--Scott--Aschbacher theorem asserts (in the terminology in~\cite{LPS1,LPS2}) that we are in one of the following situations.
\begin{description}
\item[(AS)] $M$ is \emph{almost simple}; or
\item[(Aff)] $M$ is \emph{affine}: $n=r^\ell$ for some prime $r$ and positive integer $\ell$, and $M=\AGL_\ell(r)$ when $G=\Sym(n)$ or $M=\AGL_\ell(r)\cap \Alt(n)$ when $G=\Alt(n)$; or
\item[(Diag)] $M$ is a primitive group of \emph{diagonal} type; or
\item[(Prod)] $n=m^\iota$ where $m,\iota\in\mathbb{N}$ with $m\ge 5$ and $\iota\ge 2$, and $M=\Sym(m)\wr\Sym(\iota)$ when $G=\Sym(n)$ or $M=(\Sym(m)\wr\Sym(\iota))\cap \Alt(n)$ when $G=\Alt(n)$; this is the \emph{product} action of $M$. 
\end{description}

In order to deal with these four cases, we need a little notation. We write $\Lambda:=\{1,\dots, n\}$ and we consider the actions of $G$ and $M$ on $\Lambda$: as before, for $x\in G$, we write 
\[
\fix_\Lambda(x) := |\{\lambda \in \Lambda \mid \lambda^x = \lambda\}|.
\]
On the other hand, $G$ also acts on $\Omega$, the set of right cosets of $M$ in $G$, and we write $\omega_0$ for the coset $M$, considered as an element of $\Omega$.
Now the following lemma will be used repeatedly:

\begin{lem}\label{l: beaut}
Let $k$ and $t$ be positive integers with $t\ge 3$, and suppose that the following conditions hold:
\begin{enumerate}
 \item  $M$ contains an element $h$ of order $t-1$; 
 \item $G\setminus M$ contains an element of $g$ of order $t$ with $\fix_\Lambda(g)=n-k$; and
 \item $K:=\langle g,h\rangle=\langle g\rangle \rtimes \langle h \rangle$ is a Frobenius group with Frobenius kernel $\langle g\rangle$.
\end{enumerate}
Then one of the following holds:
 \begin{enumerate}
  \item $M$ contains a non-trivial element $f$ of order divisible by $3$ such that $\fix_\Lambda(f)\geq n-2k$;
  \item $\Delta:=\omega_0^K$ is a $G$-beautiful subset of $\Omega$ of size $t$.
 \end{enumerate}
\end{lem}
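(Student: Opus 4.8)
The plan is to use Lemma~\ref{l: 2t} to show that $K=\langle g,h\rangle$ acts $2$-transitively on $\Delta=\omega_0^K$, and then to analyse what happens if this action fails to be beautiful, i.e.\ if $K^\Delta$ contains $\Alt(\Delta)$. First I would set up the hypotheses of Lemma~\ref{l: 2t}: the Frobenius group $K=\langle g\rangle\rtimes\langle h\rangle$ has a natural sharply $2$-transitive (or at least $2$-transitive) action of degree $t$ on the cosets of $\langle h\rangle$, with point stabilizer $\langle h\rangle$. Since $h\in M=G_{\omega_0}$, we have $\langle h\rangle\leq K\cap G_{\omega_0}$; the key point is that this inclusion is an equality. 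Indeed $K\cap G_{\omega_0}$ is a subgroup of the Frobenius group $K$ containing the Frobenius complement $\langle h\rangle$; if it were strictly larger it would have to contain a nontrivial element of the Frobenius kernel $\langle g\rangle$, hence (since $\langle g\rangle$ has prime-power order $t$ acting regularly) a conjugate of $g$ itself by a suitable power, forcing $g\in G_{\omega_0}=M$, contrary to hypothesis~(2). Thus $K\cap G_{\omega_0}=\langle h\rangle$ and Lemma~\ref{l: 2t} applies with $H=K$: the action of $K$ on $\Delta=\omega_0^K$ is permutation isomorphic to the $2$-transitive action of $K$ on $K/\langle h\rangle$, so in particular $|\Delta|=t$ and $K^\Delta$ is $2$-transitive. (Note $K^\Delta$ is a quotient of $K$ by the kernel of the action, which is harmless since we only need $2$-transitivity.) Since $G^\Delta\geq K^\Delta$, the set $\Delta$ is $G$-beautiful unless $K^\Delta\geq\Alt(\Delta)$, so conclusion~(2) of the lemma holds unless $\Alt(\Delta)\leq K^\Delta$.

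So suppose $K^\Delta\geq\Alt(\Delta)$; I must produce the element $f$ of conclusion~(1). The idea is that $\Alt(\Delta)$ (with $t=|\Delta|\geq 3$, actually $\geq 4$ for the argument to bite, but $t=3$ is fine since $\Alt(\Delta)=\langle g^\Delta\rangle$ already has order $3$) contains a $3$-cycle, or more precisely an element of order divisible by $3$ that fixes many points of $\Delta$; one then lifts this to an element of $K_\Delta=K$ and estimates its fixed points on $\Lambda$. Concretely: because $K$ acts on $\Delta$ with kernel contained in... — here one should be slightly careful. The cleanest route is to work inside $K$ directly rather than in the quotient $K^\Delta$. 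Using the permutation isomorphism $\Delta\cong K/\langle h\rangle$, the action of the Frobenius kernel $\langle g\rangle$ on $\Delta$ is regular, so $g$ acts on $\Delta$ as a single $t$-cycle, and more generally each conjugate of a power of $g$ moves points of $\Delta$ in a prescribed way. If $\Alt(\Delta)\leq K^\Delta$ then $K^\Delta$ contains an element of order $3$ that fixes all but $3$ points of $\Delta$; pulling back, $K$ contains an element $f_0$ whose image in $K^\Delta$ has order $3$ and which (modulo the kernel, which lies in $\langle h\rangle$ and hence has order coprime to... ) — the point I want is that $K$ contains an element $f$ of order divisible by $3$ fixing at least $t-3$ of the points in $\Delta$. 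Then for each $\delta\in\Delta$ fixed by $f$ we have $f\in G_\delta$, and since the points of $\Delta$ are of the form $\omega_0^x$ ($x\in K$) with stabilizers $G_{\omega_0}^x\supseteq M^x\ni$ conjugates of $h$, this gives $f$ lying in many point stabilizers. The cleanest version: pick two points $\delta_1,\delta_2\in\Delta$ fixed by $f$; their stabilizers are conjugates $M^{x_1},M^{x_2}$ of $M$, and $\fix_\Lambda$ is conjugation-invariant.

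The fixed-point count is the crux and the main obstacle. Since $\Delta=\omega_0^K$ and $\omega_0^{x_i}$ has stabilizer $G_{\omega_0}^{x_i}$, an element $f$ fixing $\delta_1=\omega_0^{x_1}$ lies in $G_{\omega_0}^{x_1}=M^{x_1}$, so $f^{x_1^{-1}}\in M$; to get the fixed-point bound $\fix_\Lambda(f)\geq n-2k$ I would use that $f$ is a product (in $K$) of at most two conjugates of powers of $g$ — recall $g$ fixes $n-k$ points of $\Lambda$ by hypothesis~(2), so any conjugate of $g$ fixes exactly $n-k$ points, a power $g^j$ fixes at least $n-k$ points, and a product of two such elements moves at most $2k$ points, hence fixes at least $n-2k$. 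Thus the real content is: \emph{any} element of $K$ whose image in $K^\Delta$ is a $3$-cycle can be chosen to be a product of two conjugates of powers of $g$ inside the Frobenius kernel-coset structure; this is where one uses that $\langle g\rangle\unlhd K$ so that every element of $K$ is $g^a h^b$ for suitable $a,b$, and an element of order divisible by $3$ mapping to a $3$-cycle in $\Alt(\Delta)$ — since $3\mid t$ forces... Actually the slick choice is $f:=g\cdot(g^h)^{-1}$ or $f:=g\cdot g^{h^{-1}}\cdots$ type commutator-like expressions (cf.\ the explicit element $[v,\tau]$ in the proof of Line~4 of Theorem~A above): such an $f$ visibly lies in $\langle g\rangle$... no — it lies in $K$, is a product of two conjugates of $g^{\pm1}$ hence fixes $\geq n-2k$ points of $\Lambda$, has order divisible by $3$ precisely when $K^\Delta\geq\Alt(\Delta)$ (one checks its image is a nontrivial element of the kernel-regular part fixing $t-3$ points, whose order divides... and is divisible by $3$ when $t$ is). I expect the delicate step to be pinning down this element $f$ so that simultaneously (a) $\mathrm{ord}(f)$ is divisible by $3$, (b) $f\neq 1$, and (c) $f$ fixes a point of $\Delta$ — the last so that $f$ is conjugate into $M$ and hence the hypothesis ``$M$ contains'' in conclusion~(1) is met — all of which hinge on the Frobenius structure from hypothesis~(3) and the assumption $\Alt(\Delta)\leq K^\Delta$. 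Once $f$ is in hand, conclusion~(1) follows immediately from conjugation-invariance of $\fix_\Lambda$ and the bound $\fix_\Lambda(f)\geq n-2k$.
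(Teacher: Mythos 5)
The first half of your argument --- showing $K\cap G_{\omega_0}=\langle h\rangle$ and hence that $K$ acts $2$-transitively on $\Delta=\omega_0^K$ with $|\Delta|=t$ --- is correct and matches the paper (note that hypothesis~(3) forces $t-1\mid\phi(t)$, so $t$ is automatically prime, which is what your ``prime-power order'' aside is secretly using). The second half has a genuine gap, in two places. First, your case division is wrong: $\Delta$ fails to be beautiful precisely when $G^\Delta\geq\Alt(\Delta)$, not when $K^\Delta\geq\Alt(\Delta)$. Since $|K^\Delta|\leq t(t-1)<|\Alt(\Delta)|$ for $t\geq 5$, the case you analyse is essentially vacuous, while the real bad case ($G_\Delta$ much larger than $K$) is never treated. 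Second, and more seriously, the element $f$ cannot be found inside $K$: the non-identity elements of $K$ acting on $\Delta$ are either $t$-cycles (the elements of $\langle g\rangle\setminus\{1\}$, which are never in $M$ since $t$ is prime and $g\notin M$) or elements fixing exactly one point of $\Delta$ and of order dividing $t-1$; in particular your candidate $g\cdot(g^h)^{-1}$ lies in $\langle g\rangle$, so if it were in $M$ then $g$ would be too. There is no element of $K$ that simultaneously lies in $M$, has order divisible by $3$, and is a product of two conjugates of powers of $g$.

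What you are missing is that the witness $f$ must be built from elements of $G_\Delta$ \emph{outside} $K$. The paper's route: $g$ acts as a $t$-cycle and $h$ as a $(t-1)$-cycle on $\Delta$, so $K^\Delta$ contains permutations of both parities and hence $K^\Delta\not\leq\Alt(\Delta)$; therefore if $\Delta$ is not beautiful then $G^\Delta$ properly contains $\Alt(\Delta)$, i.e.\ $G^\Delta=\Sym(\Delta)$. This parity step is essential, because it guarantees a transposition: one takes $x\in G_\Delta$ inducing $(\omega_0\,\omega_1)$ on $\Delta$ and sets $y:=(x^{-1})^g$, which induces $(\omega_1\,\omega_2)$. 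Both $x$ and $y$ fix $\omega_0$, so $f:=xy\in M$; it induces a $3$-cycle on $\Delta$, so it is non-trivial of order divisible by $3$; and $f=\left(xg^{-1}x^{-1}\right)g$ is a product of two conjugates of $g^{\pm1}$, each fixing $n-k$ points of $\Lambda$, so $\fix_\Lambda(f)\geq n-2k$. Your fixed-point bookkeeping at the end is exactly right --- it is the construction of $f$ via a transposition from $G_\Delta=\Sym(\Delta)$, rather than from within the Frobenius group $K$, that your proposal lacks.
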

\begin{proof}
Since $K$ is Frobenius, the group $\langle h\rangle$ acts by conjugation fixed-point-freely on the group $\langle g\rangle$. Since $h\in M$ and $g\not\in M$, we conclude that $K_{\omega_0}=K\cap G_{\omega_0}=K\cap M=\langle h\rangle$. Thus the action of $K$ on $\Delta$ is permutation isomorphic to the $2$-transitive action of $K$ on the right cosets of $\langle h\rangle$; in particular $K$ acts faithfully and $2$-transitively on $\Delta$.

Next observe that $K$ contains an odd and an even cycle on $\Delta$, and so $K$ (viewed as a permutation group on $\Delta$) cannot be contained in $\Alt(\Delta)$. Thus, either $\Delta$ is a $G$-beautiful subset of $\Omega$, or else $G^\Delta= \Sym(\Delta)$. 

Suppose, then, that $G^\Delta=\Sym(\Delta)$. Observe that $g$ acts as a $t$-cycle on $\Delta$. Thus, writing $\Delta=\{\omega_0,\omega_1,\dots, \omega_{t-1}\}$, we can assume that $g$ acts as the $t$-cycle $(\omega_0\,\omega_1\,\dots\, \omega_{t-1})$ on $\Delta$. Let $x$ be an element of  $G_{\Delta}$ that acts as the transposition $(\omega_0\,\omega_1)$ on $\Delta$.

Consider $y:=(x^{-1})^g$ which acts as the transposition $(\omega_1\,\omega_2)$ on $\Delta$. Since both $x$ and $y$ fix $\omega_0$, they are both elements of $M$. Now the product $xy$ acts as the $3$-cycle $(\omega_0\,\omega_2\,\omega_1)$ on $\Delta$, hence is a non-trivial element of $M$ of order divisible by $3$.

What is more, by supposition, the element $g$ fixes $n-k$ points of $\Lambda$, and hence so does its conjugate $xg^{-1}x^{-1}$. Therefore the product $xg^{-1}x^{-1}\cdot g=xy$ fixes at least $n-2k$ points of $\Lambda$. 
\end{proof}

Lemma~\ref{l: beaut} will be used in conjunction with known upper bounds for the number of fixed points of non-trivial elements in $M$ in its action on $\Lambda$. This information can be found in Table~\ref{t: fixed points}. 
(By {\rm\bf (AS*)} we mean that $M$ is an almost simple group, and if the socle of $M$ is isomorphic  to $\Alt(m)$ for some $m$, then the action of $M$ on $\Lambda$ is not permutation isomorphic to the action on the $k$-element subsets of $\{1,\dots,m\}$ for some $k\in \{1,\ldots,m-1\}$.) 
For further details concerning the notation used, we refer to the proofs of the ensuing lemmas, each of which deals with one of the four families.

 \begin{center}
\begin{table}[!ht]
\begin{tabular}{ccc}
\toprule[1.5pt]
Type of $M$ & Upper bound & Reference \\
\midrule[1.5pt]
{\rm\bf (AS)} & $n-2(\sqrt{n}-1)$ & \cite[Corollary~3]{LiebeckSaxl}\footnote{\cite[Corollary~3]{LiebeckSaxl} is a slight strengthening of~\cite{BABAI_1}.} \\
{\rm\bf (AS*)} & $4n/7$ & \cite[Theorem~1 and Corollary~$1$]{GM}\footnote{Theorem~$1$ in~\cite{GM} gives the best known upper bound for the number of fixed points of a non-identity element of a primitive group. We apply this theorem only for almost simple primitive groups $M$, where the socle of $M$ is not an alternating group $\Alt(m)$ in its natural action on $k$-subsets. In particular, under these conditions Part~2 of Theorem~$1$ in~\cite{GM} does not apply, and hence we deduce the bound given in Table~\ref{t: fixed points}.} \\
{\rm\bf (Aff)} & $n/r$,\, where $r$ is prime and $n=r^\ell$ & Elementary\\
{\rm\bf (Diag)} & $4n/15$ & \cite[Section~$5$]{GPS} \\
{\rm\bf (Prod)} & $(|\Gamma|-2)|\Gamma|^{\iota-1}$ where $\Lambda=\Gamma^\iota$ & \cite[Lemma~$6.13$]{GPS} \\
\bottomrule[1.5pt]
\end{tabular}
\caption{Upper bounds for $\max\{\fix_\Lambda(x)\mid x\in M\setminus\{1\}\}$}\label{t: fixed points}
\end{table}
\end{center}

\begin{lem}\label{l: M prod}
 If $M$ is of type {\rm\bf (Prod)}, then the action of $G$ on the cosets of $M$ admits a beautiful subset of size $5$.
\end{lem}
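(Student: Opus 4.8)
The plan is to apply Lemma~\ref{l: beaut} with $t=5$. Concretely, I would exhibit a $5$-cycle $g\in G\setminus M$ (so that $k=5$ in the notation of that lemma) together with an element $h\in M$ of order $4$ for which $K:=\langle g,h\rangle=\langle g\rangle\rtimes\langle h\rangle$ is the Frobenius group of order $20$ with kernel $\langle g\rangle$. Lemma~\ref{l: beaut} then gives the dichotomy: either $M$ contains a non-trivial element $f$ of order divisible by $3$ with $\fix_\Lambda(f)\ge n-2k=n-10$, or else $\Delta:=\omega_0^K$ is a $G$-beautiful subset of $\Omega$ of size $5$. The whole proof consists of producing $g,h$ and then ruling out the first alternative.

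For the construction I would write $\Lambda=\Gamma^\iota$ with $|\Gamma|=m\ge 5$ and identify $\Gamma=\{1,\dots,m\}$. Inside $\Sym(m)$ take the $5$-cycle $\gamma=(1\,2\,3\,4\,5)$ and the $4$-cycle $\rho=(2\,3\,5\,4)$; a direct check gives $\rho\gamma\rho^{-1}=\gamma^2$, so conjugation by $\rho$ induces an automorphism of order $4$ on $\langle\gamma\rangle$. Now let $g$ be the single $5$-cycle of $\Sym(\Lambda)$ that permutes the five points $(a,1,\dots,1)$, $a\in\{1,\dots,5\}$, according to $\gamma$ on the first coordinate, and let $h$ be the base-group element acting as $\rho$ in every coordinate. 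Then $g$ is a $5$-cycle, hence it lies in $G$ (it is an even permutation) but not in $M$: a single $5$-cycle in the base group $\Sym(m)^\iota$ would move at least $2m^{\iota-1}\ge 10>5$ points, and a non-trivial top component moves even more, so $g\notin\Sym(m)^\iota\rtimes\Sym(\iota)=M$ (alternatively, invoke Jordan's theorem, since $M$ is primitive of degree $n\ge 25$ and $M\ngeq\Alt(n)$). The element $h$ lies in $\Sym(m)^\iota\le M$, has order $4$, stabilises the distinguished fibre set-wise (as $\rho$ fixes $1$), and since $g$ is supported on that fibre we get $hgh^{-1}=g^2$; hence $K=\langle g,h\rangle\cong C_5\rtimes C_4$ is Frobenius with kernel $\langle g\rangle$. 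The one nuisance is parity when $G=\Alt(n)$: the sign of $(\rho,\dots,\rho)$ on $\Gamma^\iota$ is $(-1)^{\iota m^{\iota-1}}$, which is $+1$ unless $m$ and $\iota$ are both odd, and in that (necessarily $\iota\ge 3$) case I would instead take $h$ to be $\rho$ in the first coordinate, the $4$-cycle $(2\,3\,4\,5)$ in the second, and the identity elsewhere; this is even, still of order $4$, still fixes $1$ in every coordinate $\ge 2$, and the Frobenius argument is unchanged.

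It then remains to rule out the first alternative of Lemma~\ref{l: beaut}. By the {\rm\bf (Prod)} entry of Table~\ref{t: fixed points}, every non-trivial $x\in M$ has $\fix_\Lambda(x)\le(m-2)m^{\iota-1}=n-2m^{\iota-1}$, so the first alternative is impossible as soon as $n-10>n-2m^{\iota-1}$, that is, as soon as $m^{\iota-1}>5$. Since $m\ge 5$ and $\iota\ge 2$, this holds in every case except $m=5$, $\iota=2$ (i.e.\ $n=25$); in all other cases $\Delta=\omega_0^K$ is the desired $G$-beautiful subset of size $5$.

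The genuine obstacle is therefore the single case $n=25$, $M\le\Sym(5)\wr\Sym(2)$ acting on $\Gamma^2$ with $|\Gamma|=5$, where the table bound $(m-2)m^{\iota-1}=15$ only ties $n-10=15$. Here I would argue by hand that no non-trivial $f\in M$ of order divisible by $3$ fixes $15$ of the $25$ points: if $f=(\alpha,\beta)$ lies in the base group then $\fix_\Lambda(f)=\fix_\Gamma(\alpha)\fix_\Gamma(\beta)$, and $3\mid\mathrm{ord}(f)$ forces $\fix_\Gamma(\alpha)\le 2$ or $\fix_\Gamma(\beta)\le 2$, whence $\fix_\Lambda(f)\le 2\cdot 5=10$; while if $f$ has non-trivial top component then its fixed points are the $(a,\alpha(a))$ with $a\in\fix_\Gamma(\beta\alpha)$ for suitable $\alpha,\beta\in\Sym(5)$, so $\fix_\Lambda(f)\le 5$. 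Either way $\fix_\Lambda(f)\le 10<15$, so the first alternative of Lemma~\ref{l: beaut} is again impossible and $\Delta$ is $G$-beautiful. (Alternatively, the case $n=25$ can simply be disposed of by a direct \texttt{magma} computation, in the spirit of the other small exceptional checks in this section.)
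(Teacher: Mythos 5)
Your proposal is correct and follows essentially the same route as the paper: both apply Lemma~\ref{l: beaut} with $t=k=5$ via a Frobenius group of order $20$ generated by a $5$-cycle $g\notin M$ supported on five points of $\Gamma^\iota$ and an order-$4$ element $h\in M$, then use the {\rm\bf (Prod)} bound from Table~\ref{t: fixed points} to reduce to the single case $m=5$, $\iota=2$, which is settled by inspecting fixed points in $\Sym(5)\wr\Sym(2)$. The only differences are cosmetic: the paper's choice $h=((\gamma_1\,\gamma_2\,\gamma_3\,\gamma_4),(\gamma_1\,\gamma_2),1,\dots,1)$ is automatically even and so avoids your parity case split, and the paper disposes of $n=25$ by computation where you give a (valid) hand argument.
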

\begin{proof}
In this case $\Lambda$ can be identified with an $M$-invariant Cartesian decomposition $\Gamma^\iota$, where $\Gamma$ is a set of size $m\ge 5$, and $n=m^\iota$, where $\iota\in\mathbb{N}$ and $\iota\ge 2$. The action of $M$ on $\Gamma^\iota$ is the natural primitive product action on the Cartesian decomposition.

We write the elements of $M$ in the form $(x_1,\ldots,x_\iota)\sigma$, where $x_1,\ldots,x_\iota\in \Sym(\Gamma)$ and $\sigma\in\Sym(\iota)$. In particular, given $(\gamma_1,\ldots,\gamma_\iota)\in\Gamma^\iota$ and $m=(x_1,\ldots,x_\iota)\sigma\in M$, we get that $$(\gamma_1,\gamma_2,\ldots,\gamma_\iota)^m:=(\gamma_{1^{\sigma^{-1}}}^{x_{1^{\sigma^{-1}}}},\gamma_{2^{\sigma^{-1}}}^{x_{2^{\sigma^{-1}}}},\ldots,\gamma_{\iota^{\sigma^{-1}}}^{x_{\iota^{\sigma^{-1}}}}).$$

Fix $\gamma_0,\gamma_1,\gamma_2,\gamma_3,\gamma_4$ five distinct elements of $\Gamma$.
Consider the element $$h:=((\gamma_1,\gamma_2,\gamma_3,\gamma_4),(\gamma_1,\gamma_2),1,1,\ldots,1)$$ of $M$. Clearly, $h$ fixes a point of $\Gamma^\iota$, for instance $h$ fixes the point $\delta_0:=(\gamma_0,\gamma_0,\ldots,\gamma_0)\in\Gamma^{\iota}$. Moreover, $h$ acts as an even permutation on $\Gamma^\iota$ and hence $h\in \Alt(n)$. Observe that the $\langle h\rangle$-orbit containing $(\gamma_1,\gamma_1,\gamma_0,\ldots,\gamma_0)$ has length $4$. Namely, it consists of 
\begin{align*}
&\delta_1:=(\gamma_1,\gamma_1,\gamma_0,\ldots,\gamma_0),&&\delta_2:=\delta_1^h=(\gamma_2,\gamma_2,\gamma_0,\ldots,\gamma_0),\\
&\delta_3:=\delta_2^h=(\gamma_3,\gamma_1,\gamma_0,\ldots,\gamma_0),&&\delta_4:=\delta_3^h=(\gamma_4,\gamma_2,\gamma_0,\ldots,\gamma_0).
\end{align*}
Consider the $5$-cycle $g:=(\delta_0\,\delta_1\,\delta_2\,\delta_4\,\delta_3)$ of $\Alt(\Gamma^\iota)=\Alt(n)$. Clearly $g^h=g^2$ and so $\langle h\rangle$ normalizes $\langle g\rangle$ and $\langle g,h\rangle$ is a Frobenius group of order $20$. 

Our aim is to apply Lemma~\ref{l: beaut} to the elements $g$ and $h$ with $t:=5$ and $k:=5$. By construction $h$ is an element of $M$ of order $4=t-1$, $g$ has order $5=t$ and $\mathrm{fix}_\Lambda(g)=n-5=n-k$. Our remaining job is to verify that $g$ is not an element of $M$.

We argue by contradiction and we assume that $g\in M$. Table~\ref{t: fixed points} implies that $g$ fixes at most $(|\Gamma|-2)|\Gamma|^{\iota-1}$ points of $\Gamma^\iota\cong\{1,\ldots,n\}=\Lambda$. Hence $|\Gamma|^\iota-5=n-5=\mathrm{fix}_\Lambda(g)\leq (|\Gamma|-2)|\Gamma|^{\iota-1}$, a contradiction. 

Now Lemma~\ref{l: beaut} implies that either $\omega_0^K$ is a beautiful subset of size $5$ (and we are done), or else $M$ contains a non-trivial element $f$ of order divisible by $3$ such that $\fix_\Lambda(f)\geq n-10$. Again, Table~\ref{t: fixed points} implies that $\fix_\Lambda(f) \leq (|\Gamma|-2)|\Gamma|^{\iota-1}$. This yields $\iota=2$, $|\Gamma|=5$, $n=5^2=25$ and $f$ fixes $15$ points of $\Gamma^\iota$. Now, a computation shows that each permutation of $\Sym(5)\wr \Sym(2)$ fixing $15$ points of $\Gamma^\iota$ has order $2$, contradicting the fact that $f$ has order divisible by $3$.
 \end{proof}

Our next job is to set up some notation that will be useful for dealing with the case that $M$ is primitive of diagonal type. 
We start by recalling the structure of the finite primitive groups 
of diagonal type. 

Let $\ell\geq 1$ and let $T$ be a non-abelian simple group. Consider
the group $N:=T^{\ell+1}$ and $D:=\{(t,\ldots,t)\in N\mid t\in T\}$, a
diagonal subgroup of $N$. Let $\Lambda$ be the set of 
right cosets of $D$ in $N$. Then $|\Lambda|=|T|^\ell$. Moreover we
may identify each element $\lambda\in \Lambda$ with an element
of $T^\ell$ as follows: the right coset
$\lambda=D(\alpha_0,\alpha_1,\ldots,\alpha_\ell)$
contains a unique element whose first coordinate is $1$, namely, the
element
$(1,\alpha_0^{-1}\alpha_1,\ldots,\alpha_0^{-1}\alpha_\ell)$. We
choose this
distinguished coset representative and we denote the element 
$D(1,\alpha_1,\ldots,\alpha_\ell)$ of $\Lambda$
simply by
$$
[1,\alpha_1,\ldots,\alpha_\ell].
$$
Now the element $\varphi$ of $\Aut(T)$ acts on $\Lambda$ by
$$
[1,\alpha_1,\ldots,\alpha_\ell]^\varphi=[1,\alpha_1^\varphi,\ldots,\alpha_\ell^\varphi].
$$
Note that this action is well-defined because $D$ is
$\Aut(T)$-invariant.
Next, the element $(t_0,\ldots,t_\ell)$ of $N$ acts on $\Lambda$ by
$$
[1,\alpha_1,\ldots,\alpha_\ell]^{(t_0,\ldots,t_\ell)}=
[t_0,\alpha_1t_1,\ldots,\alpha_\ell
  t_\ell]=[1,t_0^{-1}\alpha_1t_1,\ldots,t_0^{-1}\alpha_\ell t_\ell].
$$
Observe that the action induced by  $(t,\ldots,t)\in N$ on $\Lambda$ is
the same as the action induced by the inner automorphism 
corresponding to conjugation by $t$.
Finally, the element $\sigma$ in $\Sym(\{0,\ldots,\ell\})$  acts
on $\Lambda$ simply by permuting the coordinates. Note that this action
is well-defined because $D$ is $\Sym(\ell+1)$-invariant.

The set of all permutations we described generates a group $W$ isomorphic
to $T^{\ell+1}\cdot (
\Out(T)\times \Sym(\ell+1))$. 

In our case, we may identify the set $\{1,\ldots,n\}$ with $\Lambda=N/D$. Moreover, $M=W$ when $G=\Sym(n)$ and $M=W\cap \Alt(n)$ when $G=\Alt(n)$.

 Write 
$$
R=\{(t_0,t_1,\ldots,t_\ell)\in N\mid t_0=1\}.
$$ 
Clearly, $R$ is a normal subgroup of $N$ acting
regularly on $\Lambda$. Since the stabilizer in $W$ of the point
$[1,\ldots,1]$ is $\Sym(\ell+1)\times \Aut(T)$, we obtain
that 
$$
W=(\Sym(\ell+1)\times \Aut(T))R.
$$ 
Moreover,  every element $x\in W$
can be written uniquely as $x=\sigma\varphi r$, with $\sigma\in \Sym(\ell+1)$,
$\varphi\in\Aut(T)$ and $r\in R$. 

The next lemma deals with the groups of diagonal type; we use the notation just established.

\begin{lem}\label{l: M diag}
 If $M$ is of type {\rm\bf (Diag)}, then the action of $G$ on the cosets of $M$ admits a beautiful subset of size $5$.
\end{lem}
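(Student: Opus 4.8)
The plan is to follow the template of the proof of Lemma~\ref{l: M prod}: I will exhibit an element $h\in M$ of order $4$ and a $5$-cycle $g$ of $G$ (in its action on $\Lambda$) such that $\langle g,h\rangle=\langle g\rangle\rtimes\langle h\rangle$ is a Frobenius group of order $20$, and then apply Lemma~\ref{l: beaut} with $t=k=5$. Note first that an element $h$ participating in such a configuration must permute the five points of the support of $g$, and since $5\equiv 1\pmod 4$ it must fix one of them; as $M$ is transitive on $\Lambda$, I may therefore look for $h$ inside the stabiliser of $[1,\dots,1]$, which by the discussion preceding the statement is $\Sym(\ell+1)\times\Aut(T)$.

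The construction of $h$ naturally splits into two cases. If $\ell\ge 3$, I take $h$ to be a $4$-cycle of $\Sym(\ell+1)$: in its action on $\Lambda\cong T^\ell$ it cyclically permutes four coordinates (choosing, when $\ell\ge 4$, a $4$-cycle fixing the distinguished coordinate $0$, and otherwise tracking the renormalisation), so it fixes $[1,\dots,1]$ and has a $4$-cycle, for instance on the $\langle h\rangle$-orbit of $[1,a,1,\dots,1]$ with $1\ne a\in T$. If $\ell\le 2$, I instead use an automorphism of $T$ of order $4$: since $|T|$ is even, either $T$ has an element $c$ of order $4$, and then conjugation by $c$ is an automorphism of order $4$ (as $Z(T)=1$) whose diagonal action on $T^\ell$ has a $4$-cycle on $[1,d,1,\dots,1]$ for any $d\notin C_T(c^2)$; or $T$ has elementary abelian Sylow $2$-subgroups, in which case $\Aut(T)$ nevertheless contains an element of order $4$ in all but a few families of $T$ (for instance $\PGL_2(q)\le\Aut(\PSL_2(q))$ already contains such an element when $q\equiv 3,5\pmod 8$, as does $\PGammaL_2(2^f)$ when $f$ is even). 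In each case I arrange, using the freedom available, that $h$ lies in $M$ also when $G=\Alt(n)$.

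With the $4$-cycle $(\delta_1\,\delta_2\,\delta_3\,\delta_4)$ of $h$ on $\Lambda$ and a point $\delta_0\in\Lambda$ fixed by $h$ in hand, I set $g:=(\delta_0\,\delta_1\,\delta_2\,\delta_4\,\delta_3)$; then $g$ is a $5$-cycle and $g^h=g^2$, so, since $|g|=5$ and $|h|=4$, the group $\langle g,h\rangle\cong C_5\rtimes C_4$ is a Frobenius group of order $20$. Now $h\in M$ has order $4=t-1$, the $5$-cycle $g$ has order $5=t$ and $\fix_\Lambda(g)=n-5=n-k$, and $g\notin M$: otherwise Table~\ref{t: fixed points} would give $n-5=\fix_\Lambda(g)\le 4n/15$, forcing $n\le 6$ and contradicting $n=|T|^\ell\ge 60$. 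Thus Lemma~\ref{l: beaut} applies, and it yields either that $\Delta:=\omega_0^K$ is a $G$-beautiful subset of $\Omega$ of size $5$, as wanted, or that $M$ contains a non-trivial element $f$ of order divisible by $3$ with $\fix_\Lambda(f)\ge n-10$; but the latter is impossible, since Table~\ref{t: fixed points} bounds $\fix_\Lambda(f)$ by $4n/15$, whence $11n\le 150$ and $n\le 13$, again against $n\ge 60$.

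I expect the only genuine difficulty to be the construction of $h$ when $\ell\in\{1,2\}$ and $T$ has elementary abelian Sylow $2$-subgroups, for then $\Sym(\ell+1)$ supplies no $4$-cycle and $T$ supplies no inner automorphism of order $4$; moreover, for a short list of such $T$ — namely $\PSL_2(2^f)$ with $f$ odd, the Ree groups $\Ree(q)$, and $\mathrm{J}_1$ — even $\Aut(T)$ has elementary abelian Sylow $2$-subgroup. For these (finitely many, for each of $\ell=1$ and $\ell=2$) configurations one must build $h$ by hand, combining a suitable element of the base group $T^{\ell+1}$ with the coordinate transposition in $\Sym(\ell+1)$ so as to obtain an order-$4$ element of $M$ that fixes a point of $\Lambda$ and carries a $4$-cycle there — verifying directly, or with a computer, that such an element exists; once $h$ has been produced, the remainder of the argument is exactly the routine fixed-point bookkeeping carried out above.
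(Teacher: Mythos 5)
There is a genuine gap, and it is structural rather than a matter of missing detail. Because you take $g$ to be a \emph{single} $5$-cycle, the element $h$ with $g^h=g^2$ must act on the support of $g$ as an order-$4$ element of $\AGL_1(5)$, hence must fix one of those five points of $\Lambda$; as you note, this forces $h$ to lie in a point stabilizer, which is (a subgroup of) $\Sym(\ell+1)\times\Aut(T)$. But in your residual cases --- $\ell\in\{1,2\}$ with $T$ one of $\PSL_2(2^f)$ ($f$ odd), $\Ree(3^{2f+1})$ or $J_1$ --- the group $\Aut(T)$ has elementary abelian Sylow $2$-subgroups, and so does $\Sym(2)$ and $\Sym(3)$; hence $\Sym(\ell+1)\times\Aut(T)$ contains \emph{no} element of order $4$ at all. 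The element you defer to a ``by hand or by computer'' construction therefore provably does not exist, and the single-$5$-cycle strategy cannot be salvaged there. (Note also that these are not ``finitely many configurations'': $\PSL_2(2^f)$ with $f$ odd and the Ree groups are infinite families, so a computer check would not close the case even if the element existed.)

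The paper's proof avoids exactly this obstruction: it only needs \emph{some} element $h\in M$ of order $4$ (produced, in the elementary-abelian-Sylow cases, as $h=(1,t,1,\dots,1)(0\,1)$, which fixes no point of $\Lambda$), shows via the $4n/15$ fixed-point bound that $h$ has at least five $4$-cycles on $\Lambda$, and then builds $g$ as a \emph{product of four $5$-cycles} threaded through five $4$-cycles of $h$, so that $h$ permutes the four $\langle g\rangle$-orbits cyclically and need not fix any point; Lemma~\ref{l: beaut} is then applied with $k=20$ rather than $k=5$ (the arithmetic $n-20\le 4n/15$ and $n-40\le 4n/15$ still contradicts $n\ge 60$). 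A secondary, more minor issue with your argument is that you never verify that your candidate $h$ induces an \emph{even} permutation of $\Lambda$, which is needed for $h\in M$ when $G=\Alt(n)$; the paper carries out this parity count explicitly for its choice of $h$.
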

\begin{proof}
We start with a claim:

\noindent\textsc{Claim. }The group $M$ contains an element $h$ of order $4$.

\noindent\textsc{The claim implies the result.} Suppose that the claim is true. Now write $\Lambda:=\{1,\ldots,n\}$, and consider the action of $M$ on $\Lambda$ that induces the embedding of $M$ in $G$. Under this action, let $n_1$ be the number of fixed points of $h$, $n_2$ the number of cycles of length $2$ of $h$ and $n_4$ the number of cycles of length $4$ of $h$. Clearly, $n=n_1+2n_2+4n_4$ and $h^2$ fixes $n_1+2n_2=n-4n_4$ points of $\Lambda$.  From~\cite[Lemmas~$5.3$ and~$5.4$]{GPS}, we see that $h^2$ fixes at most $4n/15$ points of $\Lambda=\{1,\ldots,n\}$ (see also Table~\ref{t: fixed points}). Therefore $n-4n_4\le 4n/15$ and $n_4\ge 11n/60$. Since $n=|\Lambda|=|T^{\iota}|\ge 60$, we get $n_4\ge 11$ and hence $h$ has at least $11$ cycles of length $4$. For our argument we only need five cycles of $h$ of length $4$. Say that, in its cycle decomposition, 
$$h=(\lambda_1\,\lambda_2\,\lambda_3\,\lambda_4)
(\lambda_5\,\lambda_6\,\lambda_7\,\lambda_8)
(\lambda_9\,\lambda_{10}\,\lambda_{11}\,\lambda_{12})
(\lambda_{13}\,\lambda_{14}\,\lambda_{15}\,\lambda_{16})
(\lambda_{17}\,\lambda_{18}\,\lambda_{19}\,\lambda_{20})\cdots.$$
Consider the element $g\in\Sym(\Lambda)$ having cycle structure
$$g:=
(\lambda_1\,\lambda_5\,\lambda_9\,\lambda_{13}\,\lambda_{17})
(\lambda_2\,\lambda_{10}\,\lambda_{18}\,\lambda_6\,\lambda_{14})
(\lambda_3\,\lambda_{19}\,\lambda_{15}\,\lambda_{11}\,\lambda_7)
(\lambda_4\,\lambda_{16}\,\lambda_8\,\lambda_{20}\,\lambda_{12}).$$
A computation gives $g^h=g^3$ and so $\langle h\rangle$ normalizes $\langle g\rangle$ and $\langle g,h\rangle$ is a Frobenius group of order $20$. Set $K:=\langle g,h\rangle$.

We wish to apply Lemma~\ref{l: beaut} to the elements $g$ and $h$ with $t:=5$ and $k:=20$. By construction $h$ is an element of $M$ of order $4=t-1$, $g$ has order $5=t$ and $\mathrm{fix}_\Lambda(g)=n-20=n-k$. Our remaining job is to verify that $g\not\in M$. We argue by contradiction and we assume that $g\in M$. Table~\ref{t: fixed points} implies that $n-20=\mathrm{fix}_\Lambda(g)\leq 4n/15$, and so $n\leq 27$. However this contradicts $n=|\Lambda|=|T^{\ell}|\ge 60$. 

Now Lemma~\ref{l: beaut} implies that either $\omega_0^K$ is a beautiful subset of size $5$ (and we are done), or else $M$ contains a non-trivial element $f$ such that $\fix_\Lambda(f)\geq n-40$. In the latter case, Table~\ref{t: fixed points} implies, again, that $n-40\leq 4n/15$, and so $n\leq 54$ which, again, contradicts the fact that $n\geq 60$.

\noindent\textsc{Proof of the claim.}  From the description of $M$, we see that $M$ contains an element of order $4$ whenever $T$ does. In particular, we may assume that $T$ has no element of order $4$ and so the Sylow $2$-subgroups of the non-abelian simple group $T$ are elementary abelian. Now, by a celebrated theorem of Walter~\cite{walter}, we get that $T$ is one of the following groups: $\PSL_2(2^f)$ for some $f\in \mathbb{N}$ with $f\ge 2$, $\PSL_2(q)$ with $q\equiv 3,5\pmod 8$, $J_1$, $\Ree(3^{2f+1})$ with $f\ge 1$.

Let $t$ be an involution of $T$ and consider the element  $m_1:=(1,t,1,\ldots,1)$ of $R$. Clearly, $m_1$ has order $2$. Consider then $\sigma:=(0\,1)\in \Sym(\ell+1)\leq W$. Now, for each $[1,\alpha_1,\alpha_2,\ldots,\alpha_\ell]\in \Lambda$, we have 
\[
[1,\alpha_1,\alpha_2,\ldots,\alpha_\ell]^\sigma=[1, \alpha_1^{-1},\alpha_1^{-1}\alpha_2,\ldots,\alpha_1^{-1}\alpha_\ell].
\]
If $\ell+1\geq 3$ (that is, $\ell \ge 2$), then this element is fixed by $\sigma$ if and only if $\alpha_1=1$. Thus $\sigma$ (viewed as a permutation in $\Sym(\Lambda)$) fixes $|T|^{\ell-1}$ points. Thus the number of cycles of length $2$ of $\sigma$ is 
\[
\frac{|T|^\ell-|T|^{\ell-1}}{2}=\frac{|T|-1}{2}|T|^{\ell-1}.
\]
Since $|T|$ is divisible by $4$, this number is clearly even, and hence $\sigma\in W\cap \Alt(\Lambda)\le M$.

If $\ell=1$, then the element $[1,\alpha_1,\ldots,\alpha_\ell]=[1,\alpha_1]$ above is fixed by $\sigma$ if and only if $\alpha_1^2=1$. In particular, $\sigma$ (viewed as a permutation in $\Sym(\Lambda)$) fixes $\iota$ points, where $\iota=|\{\alpha\in T\mid \alpha^2=1\}|$. Thus the number of cycles of length $2$ of $\sigma$ is $$\frac{|\Lambda|-\mathrm{fix}_\Lambda(\sigma)}{2}=\frac{|T|-\iota}{2}.$$
A direct computation in $\PSL_2(2^f)$, $\PSL_2(q)$, $J_1$ and $\Ree(3^{2f+1})$ reveals that $(|T|-\iota)/2$ is even. In particular, note that
\[\iota=
 \begin{cases}
   2^{2f}&\textrm{when }T=\PSL_2(2^f),\\
\frac{q^2-q+2}{2}&\textrm{when }T=\PSL_2(q),\,q\equiv 3\pmod 8,\\
   \frac{q^2+q+2}{2}&\textrm{when }T=\PSL_2(q),\,q\equiv 5\pmod 8,\\
   1464&\textrm{when }T=J_1,\\
   3^{8f+4}-3^{6f+3}+3^{4f+2}+1&\textrm{when }T=\Ree(3^{2f+1}).
 \end{cases}
 \]
Hence again $\sigma\in W\cap \Alt(\Lambda)\le M$.
 
 Let $h:=m_1\sigma=(1,t,1,\ldots,1)(0\,1)$. By construction, $h\in W\cap \Alt(\Lambda)\le M$. Moreover, $h^2=(t,t,1,\ldots,1)\neq 1$ and $h^4=1$, that is, $h$ has order $4$.
\end{proof}

Next we deal with the case where $M$ is affine.
\begin{lem}\label{l: M aff}
If $M$ is of type ${\rm\bf (Aff)}$, then either the action of $G$ on the cosets of $M$ admits a beautiful subset, or  $G=\Alt(n)$ and $n$ is an odd prime number with $n\ge 13$.
\end{lem}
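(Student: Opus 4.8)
The plan is to split the proof according to whether $\ell\ge2$ or $\ell=1$, using Lemma~\ref{l: beaut} with $t=5$ in the first case and Lemma~\ref{l: 2t} together with an elementary order-count in the second. \emph{Case $\ell\ge2$ (so $n=r^\ell$ is a proper prime power).} Since $\ell\ge2$, the subgroup $\GL_\ell(r)\le\AGL_\ell(r)$ contains an element $A$ of order $4$, and I would take $h:x\mapsto Ax$, an element of $M$ of order $4$ whose cycle type on $\Lambda=\{1,\dots,n\}$ is controlled by the eigenvalues of $A$; when $G=\Alt(n)$ one adjusts these eigenvalues so that the number of $4$-cycles of $h$ is even, i.e.\ so that $h\in\Alt(n)$. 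Writing $n_4$ for the number of $4$-cycles of $h$, the identity $\fix_\Lambda(h^2)=n-4n_4$ and the bound $\fix_\Lambda(h^2)\le n/r$ from Table~\ref{t: fixed points} show that $h$ can be chosen with $n_4\ge5$ for all $n=r^\ell$, $\ell\ge2$, apart from the small prime powers $n\in\{8,9,16\}$, which I would settle by a direct \texttt{magma} computation. From five $4$-cycles of $h$ one builds, exactly as in the proofs of Lemmas~\ref{l: M prod} and~\ref{l: M diag}, a $5$-cycle $g\in\Alt(n)\le G$ with $g^h=g^2$, so that $K:=\langle g,h\rangle$ is a Frobenius group of order $20$ with kernel $\langle g\rangle$; and $\fix_\Lambda(g)=n-5>n/r$ forces $g\notin M$. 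Lemma~\ref{l: beaut} (with $t=k=5$) then yields that either $\omega_0^K$ is a beautiful subset of size $5$, or $M$ contains a non-identity element $f$ of order divisible by $3$ with $\fix_\Lambda(f)\ge n-10$; the latter contradicts $\fix_\Lambda(f)\le n/r\le n/2$ for $n\ge20$, the remaining small $n$ being handled by computation.

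\emph{Case $\ell=1$ and $G=\Sym(n)$.} Put $p:=n$, so $M=\AGL_1(p)$ and $M$ contains an element $h$ of order $p-1$ which is a $(p-1)$-cycle on $\Lambda$ and which generates a complement to the (normal) Sylow $p$-subgroup of $M$. An incidence count --- comparing the number of pairs consisting of a $\Sym(p)$-conjugate of $M$ and a complement in it equal to $\langle h\rangle$ with the number of cyclic subgroups of $\Sym(p)$ generated by a permutation of cycle type $(p-1,1)$, all of which are conjugate to $\langle h\rangle$ --- shows that $\langle h\rangle$ is a complement in exactly $\phi(p-1)$ such conjugates. As $p\ge5$ gives $\phi(p-1)\ge2$, there is a conjugate $L\ne M$ of $\AGL_1(p)$ containing $\langle h\rangle$, and an order argument forces $L\cap M=\langle h\rangle$ (otherwise $L\cap M$ would contain the Sylow $p$-subgroup of $L$, whence $L\le M$). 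Let $g$ generate that Sylow $p$-subgroup, a $p$-cycle not in $M$. By Lemma~\ref{l: 2t}, applied with $L$ in its natural sharply $2$-transitive action of degree $p$ and an isomorphism $L\to\AGL_1(p)$ taking $\langle h\rangle$ onto a point stabilizer, $\Delta:=\omega_0^L$ has size $p$ and $L^\Delta$, hence $G^\Delta$, is $2$-transitive. Finally $G^\Delta$ is neither $\Alt(\Delta)$ nor $\Sym(\Delta)$: since $G_{(\Delta)}\le G_{\omega_0}=M$ we have $|G_{(\Delta)}|\le p(p-1)$, so $|G^\Delta|\ge|\Alt(p)|$ would force $\Alt(p)\le G_\Delta$; but $\Sym(p)$ and $\Alt(p)$ each act transitively on $\Omega$ (the latter because $\AGL_1(p)\not\le\Alt(p)$), while $0<|\Delta|=p<(p-2)!=|\Omega|$, a contradiction. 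Hence $\Delta$ is $G$-beautiful.

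\emph{Case $\ell=1$ and $G=\Alt(n)$.} Again put $p:=n$, an odd prime. If $p\ge13$ we are in the exceptional conclusion of the lemma, so nothing is to be proved. If $p\in\{7,11\}$ then $M=\AGL_1(p)\cap\Alt(p)$ satisfies $M<\PSL_2(p)<\Alt(p)$ (with $\PSL_2(p)$ in its $2$-transitive action of degree $p$), so $M$ is not maximal and this case does not arise. If $p=5$ then $M=D_{10}$ has index $6$, the action of $\Alt(5)\cong\PSL_2(5)$ on $\Omega$ is the natural $2$-transitive action on the projective line, and $\PSL_2(5)$ is isomorphic to neither $\Alt(6)$ nor $\Sym(6)$, so $\Omega$ itself is a beautiful subset.

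I expect the fiddliest part to be the case $\ell\ge2$: one must make sure that an order-$4$ element with at least five $4$-cycles exists \emph{inside} $M=\AGL_\ell(r)\cap\Alt(n)$ (the parity of the number of $4$-cycles being the delicate point when $r$ is odd) and then carry out the finite computation for the few small prime powers. The $G=\Sym(p)$ argument, by contrast, is clean: the key is that $|G_{(\Delta)}|$ is bounded by $|M|$ while $\Omega$ is large, so that $\Alt(\Delta)$ and $\Sym(\Delta)$ are excluded at once.
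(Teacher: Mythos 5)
Your overall architecture matches the paper's: for $\ell\ge 2$ you manufacture a Frobenius subgroup $\langle g\rangle\rtimes\langle h\rangle$ with $h\in M$, $g\notin M$ and feed it to Lemma~\ref{l: beaut}, killing the alternative conclusion with the fixed-point bound $n/r$ from Table~\ref{t: fixed points}; for $\ell=1$ you exhibit a $\AGL_1(p)$-orbit of size $p$ on which a conjugate of $M$ acts sharply $2$-transitively. Your $\ell=1$ arguments are correct (the $\Sym(p)$ construction is a mirror image of the paper's, which takes $\Delta=\omega_1^M$ with $\omega_1=Mx$ for $x$ inverting $h$ rather than $\Delta=\omega_0^L$ for a second conjugate $L\supseteq\langle h\rangle$; your transitivity-of-$\Alt(p)$ argument replaces the paper's appeal to maximality of $M$, and both work; the $\Alt(n)$, $\ell=1$ case is identical to the paper's). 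The paper's $\ell\ge 2$ case differs in one useful respect: it takes $h$ of order $6$ inside $\SL_\ell(r)$ and builds a single $7$-cycle $g$ through the fixed vector $0$, giving a Frobenius group of order $42$ and $t=k=7$; the residual cases are $(r,\ell)\in\{(3,2),(2,3),(2,4)\}$, i.e.\ $n\in\{8,9,16\}$, done by hand or by computer --- the same exceptional set you arrive at.

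Your $\ell\ge2$ case, however, contains three concrete slips. First, the construction of $g$ is described inconsistently: ``five $4$-cycles of $h$'' is the input to the Lemma~\ref{l: M diag} construction, which produces a \emph{product of four $5$-cycles} with $g^h=g^3$ and $\fix_\Lambda(g)=n-20$ (so $k=20$, changing all the numerology), whereas a single $5$-cycle with $g^h=g^2$ and $\fix_\Lambda(g)=n-5$ is the Lemma~\ref{l: M prod} construction and needs only \emph{one} $4$-cycle of $h$ together with the fixed vector $0$. With the latter (which is what your subsequent inequalities assume) the entire $n_4\ge 5$ computation is superfluous. Second, that computation is in any case wrong as stated: $\fix_\Lambda(h^2)\le n/r$ gives only $n_4\ge n(r-1)/(4r)$, which for $n=32$, $r=2$ yields $n_4\ge 4$, so $32$ would have to join $\{8,9,16\}$ in your exceptional list (a suitable element with six $4$-cycles does exist in $\GL_5(2)$, but your bound does not produce it). Third, and most importantly, the parity criterion you use to get $h\in M$ when $G=\Alt(n)$ is incorrect: an element of order $4$ has sign $(-1)^{n_2+n_4}$, not $(-1)^{n_4}$, so making $n_4$ even does not put $h$ in $\Alt(n)$. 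The clean repair --- and what the paper implicitly relies on --- is to choose $h$ inside $\SL_\ell(r)$, which always lies in $\Alt(r^\ell)$ (for $r$ odd it is generated by transvections of odd order; for $r=2$, $\ell\ge3$ it is perfect), and to check that $\SL_\ell(r)$ contains an element of order $4$ (true here since generalized quaternion or dihedral Sylow $2$-subgroups occur for $\ell=2,3$, and these embed for larger $\ell$). With these repairs your route goes through and the exceptional set $\{8,9,16\}$ is correct.
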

\begin{proof}
In this case $\Lambda=\{1,\ldots,n\}$ can be identified with the elements of a vector space $V$ of dimension $\ell$ over a field of prime size $r$ (so $n=r^\ell$), and $M=\AGL_\ell(r)$ when $G=\Sym(n)=\Sym(V)$ and $M=\AGL_\ell(r)\cap \Alt(n)$ when $G=\Alt(n)=\Alt(V)$.

\smallskip

\noindent\textsc{Case 1. }$\ell \ge 2$.

\smallskip

\noindent Observe that $n\ge 5$ and hence $\ell \ge 3$ when $r=2$. Since $\SL_2(r)$ contains elements of order $r-1$ and $r+1$, we conclude that $\SL_2(r)$ contains an element of order $6$ unless $r=2$. Since $\SL_3(2)$ contains an element of order $6$, this implies that $\SL_\ell(r)$ contains an element $h$ of order $6$ in all cases. By~\cite[Theorem~$4.2$]{GPS}, the group element $h$ has a cycle of length $6$ in its action on $V=\{1,\ldots,n\}$. Observe also that $h$ fixes the zero vector of $V$. In particular,  $$h=(\lambda_0)(\lambda_1\,\lambda_2\,\lambda_3\,\lambda_4\,\lambda_5\,\lambda_6)\cdots,$$
for some $\lambda_0,\ldots,\lambda_6\in V=\{1,\ldots,n\}$. 
Consider the element $g\in\Sym(\Lambda)$ having cycle structure
$$g:=
(\lambda_0\,\lambda_1\,\lambda_3\,\lambda_2\,\lambda_5\,\lambda_6\,\lambda_4)$$
A computation gives $g^h=g^3$ and so $\langle h\rangle$ normalizes $\langle g\rangle$, and $\langle g,h\rangle$ is a Frobenius group of order $42$. Set $K:=\langle g,h\rangle$. 

We wish to apply Lemma~\ref{l: beaut} to the elements $g$ and $h$ with $t:=7$ and $k:=7$. By construction $h$ is an element of $M$ of order $6=t-1$, $g$ has order $7=t$ and $\mathrm{fix}_\Lambda(g)=n-7=n-k$. In order to apply Lemma~\ref{l: beaut} we must establish that $g\not\in M$. By Table~\ref{t: fixed points}, $n-7=\mathrm{fix}_\Lambda(g)\leq n/r$ and we conclude that $(r,\ell)\in \{(3,2),(2,3)\}$. Setting these cases to one side, for a moment, Lemma~\ref{l: beaut} allows us to conclude that either $\omega_0^K$ is a beautiful subset of size $7$ (and we are done), or else $M$ contains a non-trivial element $f$ such that $\fix_\Lambda(f)\geq n-14$. In the latter case, Table~\ref{t: fixed points} implies that $n-14\le n/r$, that is, $(r,\ell)\in \{(3,2),(2,3),(2,4)\}$. 

Thus, to complete the proof, we must deal with these three cases. When $(r,\ell)=(2,3)$, the action of $G\ge\Alt(8)\cong \SL_4(2)$ on the right cosets of $\AGL_3(2)=\mathrm{ASL}_3(2)$ is $2$-transitive of degree $15$ and hence we have a beautiful subset of size $15$ (a computation with \texttt{magma} shows that we also have beautiful subsets of size $7$ and $8$); when $(r,\ell)=(3,2)$, a computation with \texttt{magma} reveals that $G$ has beautiful subsets of size $5,7$ and $9$; finally, when $(r,\ell)=(2,4)$, another computer-aided computation yields that $G$ has beautiful subsets of size $7$.

\smallskip

\noindent\textsc{Case 2. }$\ell=1$.

\smallskip
 
 \noindent Here $n=r$. Suppose that $G=\Sym(V)$. Then $M=\AGL_1(r)$ and $r\ge 5$ is odd. Let $h$ be an element of $M$ of order $r-1$ and observe that $h$, viewed as a permutation in $\Sym(V)$, has a cycle of length $r-1$ and a fixed point. Let $x\in G$ with $h^x=h^{-1}$. 
 
 Let $\omega_1$ be the coset $Mx$ and let $\Delta:=\omega_1^M$. Observe that $G_{\omega_0}=M$ and $G_{\omega_0}\cap G_{\omega_1}=M\cap M^x=\langle h\rangle$, hence $|\Delta|=r$. Now, as $M$ is maximal in $G$ and $M\le G_{\Delta}$, we get $G_{\Delta}=M$, and $G^\Delta$ is permutation isomorphic to $M$ in its $2$-transitive action on $V$. Therefore $\Delta$ is a beautiful subset of size $r$.
 
Finally, suppose that $G=\Alt(n)$, $M=\AGL_1(n)\cap\Alt(n)$ and $\Omega$ is the set of right cosets of $M$ in $G$. When $n=5$, $G$ acts $2$-transitively on $\Omega$ and $G\ngeq \Alt(\Omega)$ and hence $\Omega$ is a beautiful subset. When $n\in \{7,11\}$, the group $M$ is not maximal in $G$. Therefore $n\ge 13$.
\end{proof}

Finally we deal with the case where $M$ is almost simple.

\begin{lem}\label{l: M AS}
  If $M$ is of type ${\rm \bf(AS)}$, then either the action of $G$ on the cosets of $M$ admits a subset of size $5$ or $7$, or the socle of $M$ is $\PSL_2(2^{f})$ with $f\ge 2$, or $\PSL_2(q)$ with $q\equiv 3,5\pmod 8$.
\end{lem}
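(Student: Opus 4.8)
The plan is to run, once more, the strategy already used for the \textbf{(Prod)}, \textbf{(Diag)} and \textbf{(Aff)} cases: build inside $\Sym(\Lambda)$ a permutation $g$ of small support and prime order $t\in\{5,7\}$, together with an element $h\in M$ of order $t-1$ normalising $\langle g\rangle$, so that $K:=\langle g,h\rangle=\langle g\rangle\rtimes\langle h\rangle$ is a Frobenius group of order $t(t-1)$, and then apply Lemma~\ref{l: beaut} in combination with the fixed-point bounds in rows \textbf{(AS)} and \textbf{(AS*)} of Table~\ref{t: fixed points}. Write $T$ for the socle of $M$. The point of departure is the dichotomy: since $T$ is a non-abelian simple group, either $T$ contains an element of order $4$, or the Sylow $2$-subgroups of $T$ are non-trivial and elementary abelian, in which case Walter's theorem~\cite{walter} forces $T\in\{\PSL_2(2^f)\ (f\ge 2),\ \PSL_2(q)\ (q\equiv 3,5\pmod 8),\ J_1,\ \Ree(3^{2f+1})\ (f\ge 1)\}$. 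The first two of these are precisely the exceptions allowed by the statement, so it suffices to treat the case where $T$ has an element of order $4$ and the case $T\in\{J_1,\Ree(3^{2f+1})\}$.

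Suppose first that $T$, hence $M$, contains an element $h$ of order $4$. Writing $c$ for the number of $4$-cycles of $h$ in its action on $\Lambda$, we have $\fix_\Lambda(h^2)=n-4c$, so the bound of Table~\ref{t: fixed points} applied to the non-trivial element $h^2\in M$ forces $c\ge 5$ once $n$ exceeds an explicit constant. Taking five of these $4$-cycles and forming the product $g$ of four $5$-cycles across them exactly as in the proof of Lemma~\ref{l: M diag}, we obtain $g^h=g^3$, so $K=\langle g,h\rangle$ is Frobenius of order $20$; moreover $g$ is a product of $5$-cycles, hence even, so $g\in G$. Applying Table~\ref{t: fixed points} to $g$ (which fixes $n-20$ points) shows $g\notin M$ for $n$ large, and Lemma~\ref{l: beaut} with $t=5$ and $k=20$ then yields either a beautiful subset of size $5$, or a non-trivial $f\in M$ of order divisible by $3$ with $\fix_\Lambda(f)\ge n-40$, the latter being again impossible for $n$ large by Table~\ref{t: fixed points}. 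The finitely many remaining degrees $n$ are disposed of with \texttt{magma}~\cite{magma}.

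Now suppose $T\in\{J_1,\Ree(3^{2f+1})\}$. Each of these groups contains an element of order $6$: for $\Ree(3^{2f+1})$ one may take the product of an involution $t$ with an element of order $3$ of the direct factor $\PSL_2(3^{2f+1})$ of the centraliser of $t$. Using known results on cycle lengths in primitive groups (as in the proof of Lemma~\ref{l: M aff}), together with the explicit list of primitive actions of $J_1$ and the subgroup structure of $\Ree(3^{2f+1})$, one checks that $M$ contains an element $h$ of order $6$ acting on $\Lambda$ with a $6$-cycle and a fixed point. Building the $7$-cycle $g$ with $g^h=g^3$ as in the proof of Lemma~\ref{l: M aff}, we obtain a Frobenius group $K=\langle g,h\rangle$ of order $42$ with $g$ even; since every primitive action of $J_1$ or $\Ree(3^{2f+1})$ has large degree (at least $266$), Table~\ref{t: fixed points} immediately gives $g\notin M$ and kills the second alternative of Lemma~\ref{l: beaut} (applied with $t=7$, $k=7$), so $\omega_0^K$ is a beautiful subset of size $7$.

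The one genuinely delicate point is the cycle-structure bookkeeping. The fixed-point bounds of Table~\ref{t: fixed points} are strong enough to force the required number of $4$-cycles of $h$ in the first case, but in the case $T\in\{J_1,\Ree(3^{2f+1})\}$ they do not by themselves guarantee the shape of an order-$6$ element on $\Lambda$, so one really has to know that such an element acts with a $6$-cycle and a fixed point — this is where the input on cycle lengths, or a direct computation for $J_1$, is needed. One must also keep careful track of the finitely many small-degree configurations that fall outside the range where the inequalities bite and therefore require a \texttt{magma} check.
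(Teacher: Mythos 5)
Your overall strategy (order-$4$ dichotomy via Walter's theorem, then Frobenius groups of order $20$ or $42$ fed into Lemma~\ref{l: beaut}) matches the paper's, but there are two points where your argument does not go through as written. The more serious one is the $J_1$/$\Ree(3^{2f+1})$ case: you build a \emph{single} $7$-cycle $g$ out of a $6$-cycle \emph{and a fixed point} of an order-$6$ element $h\in M$, as in Lemma~\ref{l: M aff}. In the affine case the fixed point came for free ($h\in\SL_\ell(r)$ fixes the zero vector); here there is no such vector, and an order-$6$ element of $M$ fixes a point of $\Lambda$ only if it is conjugate into the point stabilizer $M_{\lambda_0}$. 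This genuinely fails: for example, in the primitive action of $J_1$ on the cosets of its maximal subgroup $11{:}10$, no element of order $6$ fixes a point, since $3\nmid 110$. The cycle-length results you invoke produce a long cycle, not a fixed point, so the "input" you flag as needed does not exist in the form you need it. The paper avoids this entirely by showing $h$ has at least \emph{seven} $6$-cycles and building $g$ as a product of six $7$-cycles across them (so $k=42$, and no fixed point is required); the inequality $n-42\le 4n/7$ versus $n\ge 266$ still closes the argument.

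The second issue is in the order-$4$ case: you never say which row of Table~\ref{t: fixed points} you are using, and the choice matters. The strong bound $4n/7$ of row {\rm\bf (AS*)} is only available after excluding the actions where $M$ is $\Alt(m)$ or $\Sym(m)$ on $k$-subsets; using it without that exclusion is an error. If instead you use the general {\rm\bf (AS)} bound $n-2(\sqrt n -1)$ throughout, your argument is logically sound but the inequalities only bite for $n>441$ (you need $c\ge 5$, $g\notin M$ with $\fix_\Lambda(g)=n-20$, and to kill $\fix_\Lambda(f)\ge n-40$), leaving a residual computation over all almost simple primitive maximal subgroups of degree up to $441$ -- far beyond what the claimed \texttt{magma} check can realistically cover, since the coset actions $G/M$ involved are astronomically large and must be analysed structurally. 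The paper's fix is an extra case split you are missing: if some point stabilizer $M_{\lambda_0}$ contains an element of order $4$, one builds a \emph{single} $5$-cycle through the fixed point $\lambda_0$ and one $4$-cycle (so $k=5$), and the weak {\rm\bf (AS)} bound already forces $n\le 36$; this case absorbs precisely the $k$-subset actions of alternating socles, after which the remaining order-$4$ case legitimately falls under {\rm\bf (AS*)} and your $k=20$ computation closes at $n\le 93$.
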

\begin{proof}
Here $G$ is either $\Sym(n)$ or $\Alt(n)$ and $M$ is a maximal subgroup of $G$ with $M$ almost simple. Let $T$ be the socle of $M$ and set $\Lambda:=\{1,\ldots,n\}$.  As usual, let $\Omega$ be the set of right cosets of $M$ in $G$ and denote by $\omega_0$ the coset $M$.

\smallskip

\noindent\textsc{Case 1. }Let $\lambda_0\in \Lambda$. Suppose that the stabilizer $M_{\lambda_0}$ contains an element  $h$ of order $4$.

\smallskip

\noindent Consider the action of $h$ on $\Lambda$. As $h$ has order $4$, $h$ has at least one  cycle of length $4$ in its action on $\Lambda$, say $(\lambda_1\,\lambda_2\,\lambda_3\,\lambda_4)$. Consider $g:=(\lambda_0\,\lambda_1\,\lambda_2\,\lambda_4\,\lambda_3)$. A computation gives $g^h=g^2$ and so $\langle h\rangle$ normalizes $\langle g\rangle$ and $\langle g,h\rangle$ is a Frobenius group of order $20$. Set $K:=\langle g,h\rangle$.

We wish to apply Lemma~\ref{l: beaut} to the elements $g$ and $h$ with $t:=5$ and $k:=5$. By construction $h$ is an element of $M$ of order $4=t-1$, $g$ has order $5=t$ and $\mathrm{fix}_\Lambda(g)=n-5=n-k$. We must show that $g\not\in M$: we use the fact that $\fix_\Lambda(g)=n-5\leq n-2(\sqrt{n}-1)$ by Table~\ref{t: fixed points}. Thus $n\leq 12$. Suppose, for the moment, that $n>12$. Then Lemma~\ref{l: beaut} implies that we have a beautiful subset of size $5$, or else there is a non-trivial element $f\in M$ with 
$\fix_\Lambda(f)\geq n-10$ and so, by Table~\ref{t: fixed points}, $n=10 \leq n-2(\sqrt{n}-1)$ and $n\leq 36$.

To complete the proof we must deal with the case $n\leq 36$. By a computation with \texttt{magma}, we see that, except when $G=\Sym(6)$ and $M=\PGL_2(5)$ (or $G=\Alt(6)$ and $M=\mathrm{PSL}_2(5)$), the group $G$ contains a beautiful subset of size $5$. Finally, when $G=\Sym(6)$ and $M=\PGL_2(5)$  (or $G=\Alt(6)$ and $M=\mathrm{PSL}_2(5)$), the action of $G$ is permutation isomorphic to the action of $G$ on $\{1,2,3,4,5,6\}$, which has no beautiful subset. Observe that the socle of $M$ is $\PSL_2(q)$ with $q=5$, in line with the statement of this lemma.  

\smallskip

\noindent\textsc{Case 2. }Suppose that $M$ contains an element $h$ of order $4$.

\smallskip

\noindent  From Case~1, we may assume that, for $\lambda_0\in \Lambda$, $M_{\lambda_0}$ does not contain elements of order $4$. This allows us to exclude one particular family of primitive actions. In fact, if $M$ is either $\Alt(m)$ or $\Sym(m)$ (with $m\ge 5$) in its natural primitive action on  the $k$-subsets of $\{1,\ldots,m\}$ (with $1<k<m/2$), then $M_{\lambda_0}$ is isomorphic to either $\Sym(k)\times \Sym(m-k)$ or to $(\Sym(k)\times \Sym(m-k))\cap \Alt(m)$. In both cases, provided that $(m,k)\ne (5,2)$, we have $m-k\ge 4$ and hence the group $M_{\lambda_0}$ contains an element of order $4$. Therefore we may exclude these groups $M$ from our analysis here. Observe that when $(m,k)=(5,2)$ we have $n=\binom{m}{k}=10$ and the socle of $M$ is $\Alt(5)\cong \PSL_2(q)$ with $q=5$; according to the statement of this lemma the action of $G$ on $\Omega=G/M$  potentially does not have a beautiful subset.

This excluded case means that the group $M$ is now in case {\rm\bf (AS*)}, and we can apply the stronger result Theorem~1 and Corollary~$1$ from~\cite{GM} that is listed in Table~\ref{t: fixed points}. In other words, we know that if $f$ is a non-trivial element of $M$, then $\fix_\Lambda(f)\leq \frac47 n$.

Suppose first that $n>93$. Consider the action of $h$ on $\Lambda$; let $n_1$ be the number of fixed points of $h$, $n_2$ the number of cycles of length $2$ of $h$ and $n_4$ the number of cycles of length $4$ of $h$. Clearly, $n=n_1+2n_2+4n_4$ and $\mathrm{fix}_\Lambda(h^2)=n_1+2n_2=n-4n_4$. Thus $n-4n_4\leq \frac47n$ and, since $n> 93$, we have $n_4\geq 5$. Say that, in its cycle decomposition, 
$$h=(\lambda_1\,\lambda_2\,\lambda_3\,\lambda_4)
(\lambda_5\,\lambda_6\,\lambda_7\,\lambda_8)
(\lambda_9\,\lambda_{10}\,\lambda_{11}\,\lambda_{12})
(\lambda_{13}\,\lambda_{14}\,\lambda_{15}\,\lambda_{16})
(\lambda_{17}\,\lambda_{18}\,\lambda_{19}\,\lambda_{20})\cdots.$$
Consider the element $g\in\Sym(\Lambda)$ having cycle structure
$$g:=
(\lambda_1\,\lambda_5\,\lambda_9\,\lambda_{13}\,\lambda_{17})
(\lambda_2\,\lambda_{10}\,\lambda_{18}\,\lambda_6\,\lambda_{14})
(\lambda_3\,\lambda_{19}\,\lambda_{15}\,\lambda_{11}\,\lambda_7)
(\lambda_4\,\lambda_{16}\,\lambda_8\,\lambda_{20}\,\lambda_{12}).$$
A computation gives $g^h=g^3$ and so $\langle h\rangle$ normalizes $\langle g\rangle$ and $\langle g,h\rangle$ is a Frobenius group of order $20$. Set $K:=\langle g,h\rangle$.

We wish to apply Lemma~\ref{l: beaut} to the elements $g$ and $h$ with $t:=5$ and $k:=20$. By construction $h$ is an element of $M$ of order $4=t-1$, $g$ has order $5=t$ and $\mathrm{fix}_\Lambda(g)=n-20=n-k$. We must show that $g\not\in M$. Since $g$ fixes $n-20$ elements of $\Lambda$, if $g$ were in $M$, then $n-20\leq \frac47n$ and $n\leq 60$ which contradictions our supposition. Now Lemma~\ref{l: beaut} implies that we have a beautiful subset of size $5$, or else there is a non-trivial element $f\in M$ such that $\fix_\Lambda(f)\geq n-40$. Now $n-40\leq \frac47n$ and so $n\leq 93$ which, again, is a contradiction.

We are left with the case where $n\le 93$. A computation with \texttt{magma} shows that, in this case, except when the socle of $M$ is $\PSL_2(q)$ (with $q=2^f$ or $q\equiv 3,5\pmod 8$) $G$ contains a beautiful subset of size $5$ or $7$. 
 
\smallskip

\noindent\textsc{Case 3. }Suppose that $M$ does not have an element of order $4$.

\smallskip

\noindent By a celebrated theorem of Walter~\cite{walter}, the supposition implies that $T$ is isomorphic to one of the following groups $\PSL_2(2^f)$ with $f\ge 2$, $\PSL_2(q)$ with $q\equiv 3,5\pmod 8$, $J_1$ or $\Ree(3^{2f+1})$ with $f\ge 1$. In the first two cases, the lemma does not say anything: potentially, the action of $G$ on $\Omega=G/M$ does not admit beautiful subsets. Therefore it remains to deal with the case that $T$ is either $J_1$ or $\Ree(3^{2f+1})$.

From~\cite{ATLAS} and~\cite{Ree}, we see that both $J_1$ and $\Ree(q)$ contain an element $h$ of order $6$: the centralizer of an involution in $J_1$ is isomorphic to $C_2\times \Alt(5)$ and the centralizer of an involution in $\Ree(3^{2f+1})$ is isomorphic to $C_2\times \PSL_2(3^{2f+1})$ and both $\Alt(5)$ and $\PSL_2(3^{2f+1})$ contain an element of order $3$.

We claim that $h$ has at least seven cycles of length $6$ on $\{1,\ldots,n\}=\Lambda$. This follows with an easy computation when $T=J_1$: we may construct with \texttt{magma} all primitive permutation representations of $M$ and check that an element of order $6$ always has at least seven cycles of length $6$ (actually, an element of order $6$ always has at least $42$ cycles of length $6$). Suppose then $T=\Ree(3^{2f+1})$. Write $q:=3^{2f+1}$.
Now, from~\cite[Theorem~$1$ and Table~$1$]{LawtherLiebeckSeitz}, we have $$\frac{\mathrm{fix}_\Lambda(x)}{|\Lambda|}<\frac{1}{q^2-q+1},$$
for every $x\in M\setminus\{1\}$.

The number $n_1$ of fixed points of $h$ is $\mathrm{fix}_{\Lambda}(h)$, the number $n_2$ of cycles of length $2$ of $h$ is $(\mathrm{fix}_{\Lambda}(h^2)-\mathrm{fix}_{\Lambda}(h))/2$ and the number $n_3$ of cycles of length $3$ of $h$ is $(\mathrm{fix}_{\Lambda}(h^3)-\mathrm{fix}_\Lambda(h))/3$. Therefore the number of $6$ cycles of $h$ is
\begin{align*}
  \frac{n-(n_1+2n_2+3n_3)}{6}&=\frac{n-(\mathrm{fix}_\Lambda(h^2)+\mathrm{fix}_\Lambda(h^3)-\mathrm{fix}_\Lambda(h))}{6}> \frac{n}{6}-\frac{2n}{6(q^2-q+1)}\\
  &\ge \frac{q^3+1}{6}-\frac{q+1}{3}\ge 7,
\end{align*}
where in the second inequality we used that a faithful permutation representation of $M$ has degree at least $q^3+1$ (see~\cite{Ree}), and the third inequality follows with an easy computation.

In particular, 
\begin{align*}
  h=&(\lambda_1\,\lambda_2\,\lambda_3\,\lambda_4\,\lambda_5\,\lambda_6)
(\lambda_7\,\lambda_8\,\lambda_9\,\lambda_{10}\,\lambda_{11}\,\lambda_{12})
(\lambda_{13}\,\lambda_{14}\,\lambda_{15}\,\lambda_{16}\,\lambda_{17}\,\lambda_{18})
(\lambda_{19}\,\lambda_{20}\,\lambda_{21}\,\lambda_{22}\,\lambda_{23}\,\lambda_{24})\\
&(\lambda_{25}\,\lambda_{26}\,\lambda_{27}\,\lambda_{28}\,\lambda_{29}\,\lambda_{30})
(\lambda_{31}\,\lambda_{32}\,\lambda_{33}\,\lambda_{34}\,\lambda_{35}\,\lambda_{36})
(\lambda_{37}\,\lambda_{38}\,\lambda_{39}\,\lambda_{40}\,\lambda_{41}\,\lambda_{42})
  \cdots.
  \end{align*}
Consider the element $g\in\Sym(\Lambda)$ having cycle structure
\begin{align*}
  g:=&
  (\lambda_1\,\lambda_{31}\,\lambda_{19}\,\lambda_7\,\lambda_{37}\,\lambda_{25}\,\lambda_{13})
(\lambda_2\,\lambda_{32}\,\lambda_{20}\,\lambda_{8}\,\lambda_{38}\,\lambda_{26}\,\lambda_{14})
(\lambda_{3}\,\lambda_{33}\,\lambda_{21}\,\lambda_{9}\,\lambda_{39}\,\lambda_{27}\,\lambda_{15})\\
&(\lambda_{4}\,\lambda_{34}\,\lambda_{22}\,\lambda_{10}\,\lambda_{40}\,\lambda_{28}\,\lambda_{16})
(\lambda_{5}\,\lambda_{35}\,\lambda_{23}\,\lambda_{11}\,\lambda_{41}\,\lambda_{29}\,\lambda_{17})
(\lambda_{6}\,\lambda_{36}\,\lambda_{24}\,\lambda_{12}\,\lambda_{42}\,\lambda_{30}\,\lambda_{18}).
  \end{align*}
A computation gives $g^h=g^3$ and so $\langle h\rangle$ normalizes $\langle g\rangle$ and $\langle g,h\rangle$ is a Frobenius group of order $42$. Set $K:=\langle g,h\rangle$. 
We wish to apply Lemma~\ref{l: beaut} to the elements $g$ and $h$ with $t:=7$ and $k:=42$. By construction $h$ is an element of $M$ of order $6=t-1$, $g$ has order $7=t$ and $\mathrm{fix}_\Lambda(g)=n-42=n-k$. We must prove that $g\not\in M$: if $g$ were an element of $M$, Table~\ref{t: fixed points} would imply that $n-42 \leq 4n/7$ and so $n\leq 98$, which contradicts the fact that the degree a faithful permutation representation of $T$ (being $J_1$ or $\Ree(3^{2f+1})$) is at least $266$.

Now Lemma~\ref{l: beaut} implies that we have a beautiful subset of size $7$, or else $M$ contains a non-trivial element $f$ with $\fix_\Lambda(f) \geq n-84$ and, once again, Table~\ref{t: fixed points} implies that  $n-84 \leq 4n/7$ and so $n\leq 196$. Again, since $196<266$, we have a contradiction.
\end{proof}

\section{Classical socle}\label{s: classical}

In this section we will prove Theorem~B. As in the previous section, our approach divides naturally into two sections: we will produce a classification of those actions within the ambit of Theorem~B that admit a beautiful subset, and a proof of Theorem~B will then be obtained by studying those actions that do {\bf not} admit a beautiful subset. In the current section, this last part is particularly easy as there are only  a finite number of (relatively small) such actions.

It will be convenient to take $S$ to be a  quasisimple classical group -- thus, in what follows, $S$ will be one of $\SL_n(q)$, $\Sp_n(q)$, $\SU_n(q)$, $\Omega_n(q)$ (with $n$ odd) or $\Omega_n^\varepsilon(q)$ (with $n$ even and $\varepsilon\in\{+,-\}$). We consider an action of $S$ on some set $\Omega$. Note that we {\bf do not} assume that $S$ acts primitively on $\Omega$, and we drop the (implicit) assumption that $S$ acts faithfully on the set $\Omega$.

Using isomorphisms between classical groups of small dimension, we assume that $n\geq 3$ for $S=\SU_n(q)$, $n\geq 4$ for $S=\Sp_n(q)$, $n\geq 7$ for $S=\Omega_n(q)$ with $n$ odd, and $n\geq 8$ for $S=\Omega^\varepsilon_n(q)$ with $n$ even and with $\varepsilon\in\{+,-\}$. The requirement that $S$ is quasisimple will, in addition, exclude $\SL_2(2)$, $\SL_2(3)$ and $\SU_3(2)$. %In the tables that follow, if a group is isomorphic to a symplectic or alternating group, then it is highlighted as these are already dealt with.

The group $S$ is a subgroup of the group of isometries of some fixed form $\varphi$. We will write $V$ for the associated vector space of dimension $n$ over the field $\K$ where $\K=\Fq$ except when $S$ is a unitary group, in which case $\K=\mathbb{F}_{q^2}$. The form $\varphi$ is either non-degenerate or the zero form (in the case $S=\SL_n(q)$). 

When $\varphi$ is non-degenerate, we will make use of a {\bf hyperbolic basis} $\B$ of $V$ of the form \[\{e_1,\dots, e_k, f_1,\dots, f_k\}\cup \mathcal{A},\] where $k$ is the Witt index of $\varphi$, $\langle e_i, f_i\rangle$ are hyperbolic lines for $i=1,\dots, k$ and $\mathcal{A}$ is either empty or of size at most $2$ and spans an anisotropic subspace of $V$.

Let $W$ be a subspace of $V$. Recall that a {\bf Singer cycle} $C$ on $W$ is a maximal cyclic irreducible subgroup of $\GL(W)$. In particular $C$ has order $|\K|^m-1$ where $m=\dim_{\mathbb{K}}(W)$. The following lemma concerning Singer cycles will be useful. (We denote by $A^T$ the transposed of a matrix $A$ and, when $A\in \GL_n(q^2)$, by $\bar{A}$ the matrix obtained from $A$ applying entry-wise the involutory field  automorphism of $\mathbb{F}_{q^2}$.)

\begin{lem}\label{l: singers}
Let $W=\langle e_i \mid i\in I\rangle$ for some subset $I\subseteq \{1,\dots, k\}$ and let $S_W$ be the set-wise stabilizer of $W$. Then $S_W$ contains a Singer cycle in its action on $W$ in the following cases:
\begin{enumerate}
 \item $S=\Sp_n(q)$;
 \item $S=\SU_n(q)$ with $n$ odd;
 \item $S=\SU_n(q)$ with $n$ even and $\dim(W)<\frac{n}{2}$;
 \item $S=\Omega_n^\varepsilon(q)$ with $n$ even and $\dim(W)<\frac{n}{2}$;
 \item $S=\Omega_n(q)$ with $n$ odd and $\dim(W)<\frac{n-1}{2}$.
% \item $S=\Omega_n^+(q)$ with $q=2^a$ and $\dim(W)<\frac{n}{2}$;
% \item $S=\Omega_n^-(q)$ with $q=2^a$ and $\dim(W)<\frac{n}{2}-1$.
\end{enumerate}
In addition, if $S=\SU_2(q)$ and $W=\langle e_1\rangle$, then $S_W$ induces a $\Fq$-Singer cycle on the $\Fq$-span of $e_1$.
\end{lem}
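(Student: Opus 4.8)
The plan is to produce the required Singer cycle as the image, under the natural restriction homomorphism $S_W\to\GL(W)$, of a cyclic subgroup of $S_W$ that is ``lifted'' to $S$ from $\GL(W)$ through a maximal parabolic subgroup. First I would fix the complementary data: set $W':=\langle f_i\mid i\in I\rangle$ and let $U$ be the span of those vectors of $\B$ lying in neither $W$ nor $W'$, so that $V=W\oplus W'\oplus U$ with $U\perp(W\oplus W')$, the form $\varphi$ restricts to a non-degenerate form of the same type on $U$, and $\varphi$ induces a perfect (possibly sesquilinear) pairing between $W$ and $W'$. For $g\in\GL(W)$ write $g^{\ast}\in\GL(W')$ for the adjoint of $g$ relative to this pairing, and set
\[
\hat g:=g\oplus (g^{\ast})^{-1}\oplus\mathrm{id}_U\in\GL(V).
\]
Checking $\varphi$ on each pair of summands shows $\hat g$ is an isometry fixing $W$ and $W'$ set-wise with $\hat g|_W=g$; moreover $\det\hat g=\det(g)\det(g^{\ast})^{-1}$, which equals $1$ when $\varphi$ is symplectic or bilinear and equals $(\det g)^{1-q}$ when $\varphi$ is Hermitian. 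I would also record the elementary fact that the pointwise stabiliser of $W\oplus W'$ in the full isometry group of $\varphi$ consists exactly of the maps $\mathrm{id}_W\oplus\mathrm{id}_{W'}\oplus u$ with $u$ an arbitrary isometry of $\varphi|_U$; call such a map $\hat u$.

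Now take a generator $g_0$ of a Singer cycle $C\le\GL(W)=\GL_m(\K)$, where $m=\dim_\K W$, so $\langle g_0\rangle=C$ is irreducible on $W$ of order $|\K|^m-1$. In case~(1) the element $\hat g_0$ is symplectic, hence lies in $\Sp_n(q)=S$ and stabilises $W$; since $\hat g_0|_W=g_0$, the restriction map carries $\langle\hat g_0\rangle$ onto $C$, so $S_W$ induces a Singer cycle on $W$ with no restriction on $m$, as the lemma asserts. In cases~(2) and~(3) the hypothesis gives $m<n/2$ (automatic when $n$ is odd, since then $m\le(n-1)/2$), so $U\neq 0$, the form $\varphi|_U$ is a non-degenerate Hermitian form, and $\det\colon\GU(\varphi|_U)\to\{x\in\Fqt^{\ast}\mid x^{q+1}=1\}$ is onto; I would choose $\hat u$ with $\det\hat u=(\det g_0)^{q-1}$ (a norm-one element), so that $\det(\hat g_0\hat u)=1$. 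Then $\hat g_0\hat u\in\SU_n(q)=S$, it lies in $S_W$, and $\hat g_0\hat u|_W=g_0$ because $\hat u$ fixes $W$ pointwise; hence $S_W$ induces $C$ on $W$.

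Cases~(4) and~(5) run along the same lines, the only change being the obstruction to membership of $S=\Omega^{(\varepsilon)}_n(q)$: here $\det\hat g_0=1$ automatically, so what must be corrected is the spinor norm (or, in characteristic $2$, the Dickson invariant). The hypotheses $\dim W<n/2$ (for $n$ even) and $\dim W<(n-1)/2$ (for $n$ odd) are precisely what force $\dim U\ge 2$: since $\dim U=n-2m$ has the same parity as $n$, for $n$ odd it equals $1$ exactly when $\dim W$ is the Witt index. With $\dim U\ge 2$ the group $\SOr(\varphi|_U)$ surjects onto $\Fq^{\ast}/(\Fq^{\ast})^2$ under the spinor norm (equivalently $\Omega(\varphi|_U)$ has index $2$ in $\SOr(\varphi|_U)$), so I can pick $\hat u$ with $\det\hat u=1$ and spinor norm inverse to that of $\hat g_0$ (in characteristic $2$, pick $\hat u\in\Or(\varphi|_U)$ with the opposite Dickson invariant); then $\hat g_0\hat u\in\Omega^{(\varepsilon)}_n(q)=S$ lies in $S_W$ and restricts to $g_0$ on $W$, as needed. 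Finally, the addendum for $S=\SU_2(q)$ and $W=\langle e_1\rangle$ (the excluded case $\dim W=n/2$) is a direct check: the torus $\{\mathrm{diag}(a,a^{-1})\mid a\in\Fq^{\ast}\}\le\SU_2(q)$ lies in $S_W$, preserves the $\Fq$-line $\Fq e_1$, and acts on it as all of $\Fq^{\ast}=\GL_1(q)$, i.e.\ as an $\Fq$-Singer cycle.

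The part I expect to require the most care is the orthogonal case: one must be sure that the complementary isometry group $\Or(\varphi|_U)$ really does sit inside $S$ acting trivially on $W\oplus W'$ and carries enough spinor norms (respectively Dickson invariants) to absorb the obstruction from $\hat g_0$, and that the Witt type of $\varphi|_U$ behaves as claimed in each of the $+$, $-$ and odd cases. This bookkeeping is most cleanly discharged by invoking the standard description of the maximal parabolic subgroups of the classical groups and their Levi factors, for instance as in~\cite{kl}.
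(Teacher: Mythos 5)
Your proof is correct and follows essentially the same route as the paper's: lift a Singer cycle generator of $\GL(W)$ to a Levi-type isometry $g\oplus (g^{\ast})^{-1}\oplus \mathrm{id}$ and then cancel the determinant (unitary case) or spinor norm/Dickson invariant (orthogonal case) obstruction by multiplying by a suitable isometry of the perpendicular complement, which the dimension hypotheses guarantee is nonzero (respectively, at least $2$-dimensional). The only cosmetic differences are that the paper reduces the even-dimensional unitary case to the odd one by enlarging to an odd-dimensional non-degenerate subspace, and in the orthogonal case it corrects the spinor norm inside a specific non-degenerate $2$-dimensional complement of type $\mathrm{O}_2^\varepsilon$ rather than inside all of $(W\oplus W')^{\perp}$.
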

\begin{proof}
We prove this lemma case by case.
 
Let $S=\Sp_n(q)$. Writing a hyperbolic basis for $V$ with the ordering $e_1,\dots, e_k, f_1,\dots, f_k$, we see that the matrix 
\[
 \begin{pmatrix}
    A & 0 \\ 0 & (A^{-1})^{T}
   \end{pmatrix}
\]
lies in $S$, for every $A\in\GL_{k}(q)$. Thus, for any subspace $W$ of $\langle e_1,\dots, e_k\rangle$, $S_W$ induces $\GL(W)$ on $W$ and hence $S_W$ contains a Singer cycle on $W$.

Let $S=\SU_n(q)$ with $n$ odd and let $\mathcal{A}=\{x\}$. Then, writing a hyperbolic basis for $V$ with the ordering $e_1,\dots, e_k, x,f_1,\dots, f_k$, we see that $S$ contains the matrix 
\[
 \begin{pmatrix}
    A & & 0 \\ & a^{q-1} & \\ 0 & &(\bar{A}^{-1})^{T}
   \end{pmatrix}
\]
where $a=\det(A)$, for every $A\in \GL_{k}(q^2)$. Then the result follows immediately as for $\Sp_n(q)$. If $S=\SU_n(q)$ with $n$ even, then the result follows (from the case $n$ odd) by observing that any even-dimensional non-degenerate proper subspace of $V$ can be extended to an odd-dimensional non-degenerate subspace. The final remark concerning $\SU_2(q)$ can be verified directly.

Let $S=\Omega_n^\varepsilon(q)$ with $n$ even. Observe first that we can extend $W$ to a non-degenerate $(n-2)$-dimensional subspace $W_1$ of $V$ of type $+$. Then the stabilizer in $\SOr_n^\varepsilon(q)$ of $W_1$ is isomorphic to a subgroup of $\SOr_{n-2}^+(q) \times \SOr_2^\varepsilon(q)$. 

From~\cite{kl}, we observe that $|\SOr_2^\varepsilon(q): \Omega_2^\varepsilon(q)|=2$. If $q$ is odd, then this implies that $\SOr_2^\varepsilon(q)$ contains elements of non-square spinor norm. If $q$ is even, then this implies that $\SOr_2^\varepsilon(q)$ contains elements that are a product of an odd number of reflections (see Descriptions 1 and 2 in \cite[pp. 29 and 30]{kl}). In either case, provided $\Omega \neq \Omega_4^+(2)$, one concludes immediately that the set-wise stabilizer of $W_1$ in $\SOr_{n}^\varepsilon(q)$ induces $\SOr_{n-2}^+(q)$ on $W_1$. (If $\Omega=\Omega_4^+(2)$, then there is nothing to prove so this case, too, is covered.)

Now, a matrix of the type used in the symplectic case implies that the set-wise stabilizer of $W_1$ in $\SOr_{n-2}^+(q)$ induces $\GL(W_1)$ on $W_1$. The result then follows immediately, and (arguing exactly as in the unitary case) is easily extended to account for $S=\Omega_n(q)$ with $n$ odd. 
\end{proof}

\subsection{Our method}\label{s: classical method}

Our method for constructing a beautiful subset is, roughly, as follows. Suppose that we are considering the action of $S$ on the set of cosets of a subgroup $M$. 

Suppose, first, that $S$ contains a maximal split torus $T$ with $T\leq M$. It is clear that $M$ cannot contain all of the root subgroups of $S$ (with respect to $T$) as these generate the group $S$. Let $U$ be a root subgroup that is not contained in $M$, and observe that $T$ normalizes $U$.

Suppose, second, that $T$ acts transitively and fixed-point-freely on the non-identity elements of $U$. Then, since $U$ is not contained in $M$, we conclude that $U\cap M=1$. Thus the semi-direct product $H=U\rtimes T$ is endowed of a  $2$-transitive action on $\{Mh\mid h\in H\}$, the action having degree equal to $|U|$. Now Lemma~\ref{l: 2t} implies that $H$ acts 2-transitively on the orbit $\Lambda=M^H=\{Mh\mid h\in H\}$.

To complete the construction we must show that $G^\Lambda\ne \symme(\Lambda)$. Our method for doing this depends on the specific geometry associated with the stabilizer $M$; in general, though, there is a natural monomorphism from $G^\Lambda$ to either $\GL(W)$ or $\PGL(W)$, where $W$ is some section of $V$. Very often $\dim_{\mathbb{K}}(W)=2$ and one concludes immediately (using the subgroup structure of $\GL_2(\mathbb{K})$ and $\PGL_2(\mathbb{K})$) that the largest symmetric group in $G^\Lambda$ has degree $5$. If $|\Lambda|\geq 6$, then we are done.

The method just described depends, of course, on the validity of our two suppositions, as well as our ability to demonstrate that the full symmetric group is not induced by the set-stabilizer $G^\Lambda$. For most classical groups this method works, however for some ``small'' cases all of these requirements fail at various stages, and we will adjust our method accordingly, or ultimately for ``very small'' cases we rely on the invaluable help of the computer algebra system \texttt{magma}~\cite{magma}. This method can also be applied to the other ``geometric'' subgroups of the classical groups (families $\mathcal{C}_2$ to $\mathcal{C}_8$), as well as to the exceptional groups. See~\cite{gls_binary} for more details.

\subsection{Classifying  beautiful subsets}

In this case $M$ is either the stabilizer of a subspace $W$ of $V$ (in which case we write $M=\stab_S(W)$), or the stabilizer of a pair of subspaces, $W_1$ and $W_2$ of $V$ (in which case we write $M=\stab_S(W_1,W_2)$. 

When $S\neq \SL_n(q)$ and $M=\stab_S(W)$, we must distinguish between the case where the subspace is totally isotropic (in which case the group $M$ is a maximal parabolic subgroup), and the case where the subspace is non-degenerate.

 \begin{center}
\begin{table}[!htbp]
\begin{tabular}{ccl}
\toprule[1.5pt]
Line&Group & Details of action \\
\midrule[1.5pt]
1&$\SL_2(4)$& $M=\stab_S(W)$, $\dim(W)=1$\\
2&$\SL_3(2)$&$M=\stab_S(W_1,W_2)$, $V=W_1\oplus W_2$, $\dim(W_1)=1$, $\dim(W_2)=2$\\
3&$\SL_3(2)$, $\SL_3(3)$&$M=\stab_S(W_1,W_2)$, $W_1< W_2$, $\dim(W_1)=1$, $\dim(W_2)=2$\\\midrule[1pt]
4&$\SU_4(2)$ & $M=\stab_S(W)$, $W$ totally isotropic, $\dim(W)=2$ \\
5&$\SU_4(2)$& $M=\stab_S(W)$, $W$ non-degenerate, $\dim(W)=1$ \\
6&$\SU_5(2)$&$M=\stab_S(W)$, $W$ non-degenerate, $\dim (W)=2$\\\midrule[1pt]
7&$\Sp_4(2)', \Sp_4(3)$ & $M=\stab_S(W)$, $W$ totally isotropic, $\dim(W)=1$\\
8&$\Sp_4(2)'$ & $M=\stab_S(W)$, $W$ totally isotropic, $\dim(W)=2$\\\midrule[1pt]
9&$\Omega_7(3)$ & $W$ non-degenerate, $\dim(W)=1$, $M$ of type $\mathrm{O}_6^-(3)\times \mathrm{O}_1(3)$\\
\bottomrule[1.5pt]
\end{tabular}
\caption{Cases where a beautiful subset does not exist.}\label{t: c1 sln}
\end{table}
\end{center}

\begin{prop}\label{prop1}Let $S=\SL_n(q)$ with $n\ge 2$ and $q\ge 4$. Then $S$, in its action on the set $\Omega$ of right cosets of a subgroup $M$ in the Aschbacher family $\mathcal{C}_1$, has a beautiful subset except for the cases  in Line~$1$,~$2$ and~$3$ of Table~$\ref{t: c1 sln}$.
  \end{prop}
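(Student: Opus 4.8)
The plan is to run the method of \S\ref{s: classical method}. Fix a basis $e_1,\dots,e_n$ of $V$ adapted to $M$: if $M=\stab_S(W)$ with $\dim W=k$ take $W=\langle e_1,\dots,e_k\rangle$; if $M=\stab_S(W_1,W_2)$ take $W_1=\langle e_1,\dots,e_a\rangle$, $W_2=\langle e_{a+1},\dots,e_n\rangle$ in the direct-sum case and $W_2=\langle e_1,\dots,e_{n-a}\rangle$ in the nested case. With this choice the diagonal torus and the monomial subgroup of $S$ lie inside $M$. The case $n=2$ is immediate: the only $\mathcal{C}_1$ subgroup is a Borel, $\Omega$ is the projective line over $\mathbb{F}_q$, and $S^\Omega=\PSL_2(q)$, which is $2$-transitive and distinct from $\Alt(\Omega),\Sym(\Omega)$ exactly when $q\ge 5$; when $q=4$ one has $\PSL_2(4)\cong\Alt(5)=\Alt(\Omega)$ and no beautiful subset can exist because $|\Omega|=5$, which is Line~$1$ of Table~\ref{t: c1 sln}. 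So assume $n\ge 3$.

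\textbf{Generic construction ($q\ge 5$).} Since the root subgroups generate $S$ and $M$ is proper, some root subgroup $U=U_{ij}$ is not contained in $M$; one checks that $U$ then moves exactly one of the subspaces defining $M$. Pick a cyclic subgroup $C=\langle\mathrm{diag}(\dots,\gamma,\dots,\gamma^{-1},\dots,1)\rangle$ of the diagonal torus with $\gamma$ a generator of $\mathbb{F}_q^\times$ in position $i$ and $\gamma^{-1}$ in a position $\ne i,j$ (possible because $n\ge 3$): then $C\le M$, $C$ normalises $U$, and $C$ scales $U\setminus\{1\}$ through $\gamma$, hence acts regularly on it. Thus $U\cap M=1$ and, by Lemma~\ref{l: 2t} applied with $K=\AGL_1(q)$, $H:=U\rtimes C\cong\AGL_1(q)$ acts $2$-transitively on $\Lambda:=\omega_0^H$, with $|\Lambda|=|U|=q$. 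To bound $S^\Lambda$ I would observe that the subspaces varying among the members of $\Lambda$ all lie strictly between two fixed subspaces $W_-\subset W_+$ with $\dim(W_+/W_-)=2$, and that $W_-$, $W_+$ and the non-varying component (if $M$ is a pair-stabiliser) are intrinsic to $\Lambda$ as an intersection, a sum and a common component; hence $S_\Lambda$ fixes them and also fixes the one line of $\mathbb{P}(W_+/W_-)$ missing from $\Lambda$, so $S^\Lambda$ embeds in $\AGL_1(q)\le\PGL_2(q)$ in its natural action on $q$ points. Since $H^\Lambda\le S^\Lambda\le\AGL_1(q)$ and $H^\Lambda=\AGL_1(q)$, we get $S^\Lambda=\AGL_1(q)$, which for $q\ge 5$ is $2$-transitive of degree $q\ge 5$ and, having order $q(q-1)$, is neither $\Alt(q)$ nor $\Sym(q)$: $\Lambda$ is beautiful.

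\textbf{The case $q=4$.} Here $|\Lambda|=4$ and $\AGL_1(4)\cong\Alt(4)$, so larger subsets are needed and no $\PGL_2(4)$-action is of any use, since $\PGL_2(4)\cong\Alt(5)$. If $M=\stab_S(W)$ I would instead take $\Lambda$ to be the set of all $k$-subspaces $X$ with $W_-\subseteq X\subseteq W_+$, where $W_-\subset W_+$ is chosen with $\dim(W_+/W_-)=3$ (namely $\dim W_-=k-1,\ \dim W_+=k+2$ if $k\le n-2$; and $\dim W_-=k-2,\ W_+=V$ if $k=n-1$, using $n\ge 3$). Then $W_-=\bigcap\Lambda$ and $W_+=\sum\Lambda$ are intrinsic, so $S_\Lambda=\stab_S(W_-,W_+)$ and $S^\Lambda$ is the image of $\stab_S(W_-,W_+)$ in $\PGL(W_+/W_-)\cong\PGL_3(4)$ acting on the $21$ one- (resp. two-)dimensional subspaces of $W_+/W_-$; this image contains $\PSL_3(4)$, hence is $2$-transitive and far too small to be $\Alt(21)$ or $\Sym(21)$, and $\Lambda$ is beautiful. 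If $M=\stab_S(W_1,W_2)$ and $n\ge 4$, an analogous construction works: either vary one of the two subspaces inside a $3$-dimensional section (giving $S^\Lambda\supseteq\PSL_3(4)$ on $21$ points as above), or move one of them by means of two root subgroups $U,U'$ together with a torus $C\le M$ of order $q^2-1$ that acts on $U\times U'\cong E_{16}$ as a Singer cycle of $\GL_2(4)$ -- which is arranged by placing the Singer cycle in a $\GL_2$-block of the Levi and steering its determinant into a basis vector not otherwise used, the room for which is exactly what requires $n\ge 4$ -- so that $H=(U\times U')\rtimes C\cong\AGL_1(16)$ acts $2$-transitively (Lemma~\ref{l: 2t}) on $\Lambda:=\omega_0^H$ of size $16$, with the same geometric argument giving $S^\Lambda\le\AGL_2(4)$, a $2$-transitive group of degree $16$ and order $2880$, hence beautiful. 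The cases not covered by these constructions are $(n,q)=(2,4)$ with $M$ a $1$-space stabiliser (Line~$1$, already disposed of) and $(n,q)=(3,4)$ with $M$ a pair-stabiliser; these I would settle by a direct \texttt{magma} computation exhibiting a beautiful subset (for instance an orbit of size $10$ of a subgroup isomorphic to $\Alt(6)$, on which it acts as $\PSL_2(9)$).

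\textbf{Main obstacle.} The delicate point is making the auxiliary torus act on the chosen root subgroup(s) with precisely the transitivity demanded by Lemma~\ref{l: 2t}: inside $\SL_n(q)$ the naive torus scales a root subgroup only through the squares of $\mathbb{F}_q^\times$, so the determinant has to be steered into spare coordinates, and for $q=4$ one is additionally barred from every $\PGL_2(4)$-action and forced up into $\PGL_3(4)$- and $\AGL_2(4)$-actions; fitting those inside $\SL_n$ while respecting $\det=1$ is exactly what produces the thresholds $n\ge 3$ and $n\ge 4$, singles out $\SL_2(4)$ as a genuine exception, and leaves the handful of $\SL_3(4)$ pair-stabiliser actions to be confirmed by machine.
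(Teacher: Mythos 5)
Your argument is correct and reaches the same conclusion, but it implements the general strategy of \S\ref{s: classical method} directly, whereas the paper's proof of Proposition~\ref{prop1} uses genuinely different beautiful subsets. The paper works with large sets: for $\stab_S(W)$ with $\dim W>1$ it takes the full pencil of $k$-spaces through a hyperplane of $W$, obtaining $\PGL(V/W')$ acting on a projective space of dimension at least $2$; for nested pairs it reduces to the subspace case inside $\GL(W_2)$; for $V=W_1\oplus W_2$ it uses all complements of $W_2$ (an $\AGL_{n-1}(q)$-set of size $q^{n-1}$) when $\dim W_1=1$, and a $q^3$-element unipotent group normalised by a nonsplit torus of order $q^3-1$ when $\dim W_1>1$ --- constructions essentially uniform in $q$. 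You instead take the minimal set $\omega_0^{U\rtimes C}$ of size $q$, with $U$ a root subgroup and $C$ a rank-one torus, and pin $S^\Lambda$ down to $\AGL_1(q)$ via the intrinsic pair $W_-\subset W_+$; this is cleaner and more uniform across the three types of $\mathcal{C}_1$ subgroup, but it collapses at $q=4$ (where $\AGL_1(4)\cong\Alt(4)$ and $\PGL_2(4)\cong\Alt(5)$), forcing your separate $\PGL_3(4)$-on-$21$-points and $\AGL_1(16)\leq\AGL_2(4)$ constructions. Both proofs end by sending $\SL_3(4)$ pair-stabilizers to \texttt{magma}; note that the paper disposes of the direct-sum pair for $\SL_3(4)$ by hand (the $\AGL_2(4)$ action on the $16$ complements), so your machine check there is a safe but unnecessary fallback. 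Two small imprecisions, neither fatal: the monomial subgroup of $S$ does \emph{not} in general lie in $M$ (only the diagonal torus does, which is all you actually use); and it is not true that every root subgroup outside $M$ moves exactly one of the defining subspaces of a pair-stabilizer --- you need to, and implicitly do, choose one that does. Finally, you should say explicitly that the ``vary one subspace inside a $3$-dimensional section'' option applies only to the nested pairs: for $V=W_1\oplus W_2$ the $21$ intermediate subspaces include non-complements of $W_2$, so only your two-root-subgroup construction is available there.
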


\begin{proof}%[Proof of Table~$\ref{t: c1 sln}$]
  According to~\cite{kl}, there are two cases to consider here: either $M$ is the stabilizer of a subspace $W$ of $V$ with $0<W<V$, or $M$ is the stabilizer of a pair $\{W_1,W_2\}$ of subspaces of $V$ with $W_1< W_2$ and $n=\dim(W_1)+\dim(W_2)$ or with $V=W_1\oplus W_2$.

  \smallskip

  \noindent\textsc{Case 1: }Here $M$ is the stabilizer of the subspace $W$ of $V$.

  \smallskip

  \noindent Now $M$ is a maximal parabolic subgroup of $S$. 
Since the action of $S$ on the $k$-dimensional subspaces of $V$ is permutation isomorphic to the action on the $(n-k)$-subspaces of $V$, we may assume that $\dim (W)\le n/2$. 

If $\dim(W)=1$, then the action of $S$ on $\Omega$ is $2$-transitive. Now $\Omega$ itself is a beautiful subset unless $S=\SL_2(4)\cong \alter(5)$: this case is in Line~1 of Table~\ref{t: c1 sln}. 
  
 Suppose next that $\dim(W)>1$. Observe that this implies that $n\geq 4$. Let $W'$ be a subspace of  $W$ with $\dim(W')=\dim(W)-1$ and consider $\Lambda=\{W''\le V\mid W'\subset W'',\dim(W'')=\dim(W)\}$. Clearly, $S_\Lambda=\stab_S(W')$ and the action of $S^\Lambda$ on $\Lambda$ is permutation isomorphic to the natural $2$-transitive action of $\GL(V/W')$ on the $1$-dimensional subspaces of $V/W'$. Therefore $\Lambda$ is a beautiful subset: the exception $\SL_2(4)$ in the previous paragraph does not arise here because $\dim(V/W')>2$.

 \smallskip

 \noindent\textsc{Case 2: }$M$ is the stabilizer of  the pair $\{W_1,W_2\}$, where $W_1$ and $W_2$ are two subspaces of $V$ with $n=\dim(W_1)+\dim(W_2)$ and $W_1< W_2$.

 \smallskip

 \noindent  Here, $\dim(W_1)<n/2$ by \cite[Table 3.5.A]{kl}; in particular, $n\ge 3$. Let $H=\stab_S(W_2) $ be the stabilizer in $S$ of $W_2$. Observe that the action of $H$ on $\{W_1,W_2\}^H$ is permutation isomorphic to the action of  $\GL(W_2)$ on $W_1^H$. In particular, by the previous discussion, when $(\dim(W_2),q)\notin\{(2,2),(2,3),(2,4)\}$, there exists $\Lambda'\subseteq W_1^H$ such that $\Lambda'$ is a beautiful subset for the action of $H$ on the subspaces of $W_2$ of dimension $\dim(W_1)$. Now, set $\Lambda=\{\{W_1',W_2\}\mid W_1'\in \Lambda'\}$. Clearly, $S_\Lambda\subseteq \stab_S(W_2)=H$ and hence $S_{\Lambda}=H_{\Lambda'}$; it follows that $\Lambda$ is a beautiful subset of $S$. It remains to consider the case that $\dim(W_2)=2$ and $q\in \{2,3,4\}$. As $\dim(W_1)<n/2$ and $\dim(W_2)= 2$, we get $n=3$. When $n=3$ and $q=4$, a direct computation with the invaluable help of \texttt{magma} shows that $\SL_3(4)$ admits a beautiful subset of size $9$. The groups $\SL_3(2)$ and $\SL_3(3)$ are listed in Line~3 of Table~$\ref{t: c1 sln}$.

 \smallskip

 \noindent\textsc{Case 3: }$M$ is the stabilizer of  the pair $\{W_1,W_2\}$, where $W_1$ and $W_2$ are two subspaces of $V$ with $V=W_1\oplus W_2$.
 
\smallskip  

\noindent  Here, $\min\{\dim(W_1),\dim(W_2)\}<n/2$ by \cite[Table 3.5.A]{kl}. Replacing $(W_1,W_2)$ by $(W_2,W_1)$ if necessary, we may assume that $\dim(W_1)< \dim(W_2)$.
 Let $\B=(v_1,\dots, v_m)$ be a basis of $W_1$ and let $(v_{m+1},\ldots,v_{n})$ be a basis of $W_2$.
 
Suppose that $\dim(W_1)=1$. We 
take $U$ to be the subgroup of $S$ whose elements fix $v_2, v_3,\dots, v_n$ and satisfy
 \[ 
 v_1\mapsto v_1+ \alpha_2 v_{2}+\alpha_3 v_{3}+\cdots +\alpha_n v_n,
 \]
 for some $\alpha_2,\ldots,\alpha_n\in\Fq$. We take $T$ to be the stabilizer in $S$ of the direct sum decomposition 
 \[
  \langle v_1\rangle \oplus \langle v_{2}, v_3,\ldots,v_n\rangle.
 \]
Then $U$ is an elementary abelian  group of size $q^{n-1}$, $T\cong \GL_{n-1}(q)$ and $\langle U,T\rangle=U\rtimes T$ is isomorphic to the affine general linear group $\AGL_{n-1}(q)$; moreover   the action of $U\rtimes T$ on $$\Lambda=\{W_1,W_2\}^U=\{\{W_1',W_2\}\mid W_1' \textrm{ subspace of }V \textrm{ with }V=W_1'\oplus W_2\}$$ is permutation isomorphic to the natural $2$-transitive affine action of $\AGL_{n-1}(q)$. Observe that $U\rtimes T$ is the stabilizer in $S$ of the subspace $W_2$, therefore $U\rtimes T$ is maximal in $S$ and hence $U\rtimes T=S_\Lambda$; it follows that, for $(n,q)\neq (3,2)$,  $\Lambda$ is a beautiful subset. (When $q=2$ and $n=3$, we have $|\Lambda|=4$, $\AGL_{3-1}(2)\cong \alter(4)$ and $\Lambda$ is not beautiful.) Moreover, the group $\SL_3(2)$ is listed in Line~2 of Table~$\ref{t: c1 sln}$.

Finally, suppose that $\dim(W_1)>1$.  Observe that $\dim(W_2)\ge 3$ because $\dim(W_1)<n/2$ and $\dim(W_1)<\dim(W_2)$. (Recall that $m=\dim(W_1)$.) In this case we proceed similarly to the previous case, but take $U$ to be the subgroup whose elements fix $v_1,\dots, v_{m-1}, v_{m+1},\dots, v_n$ and satisfy
 \[ 
 v_m\mapsto v_m+ \alpha v_{m+1}+\beta v_{m+2}+\gamma v_{m+3},
 \]
 for some $\alpha,\beta,\gamma \in\Fq$. Let $T$ be a subgroup of the stabilizer in $S$ of the direct sum decomposition
 \[
  \langle v_1\rangle \oplus \cdots \oplus \langle v_{m}\rangle \oplus \langle v_{m+1}, v_{m+2}, v_{m+3}\rangle \oplus \langle v_{m+4}\rangle \oplus \cdots \oplus \langle v_n\rangle,
 \]
with $T$ fixing $v_2,\ldots,v_{m},v_{m+4},\ldots,v_{n}$ and acting on $\langle v_{m+1}, v_{m+2},v_{m+3}\rangle$ as a non-split torus of order $q^3-1$: observe that this is possible because, if the element $x\in T$ induces on $\langle v_{m+1},v_{m+2},v_{m+3}\rangle$ a matrix $x'$, the element $x$ induces on $\langle v_1\rangle$ the scalar  $\det (x')^{-1}$, thus guaranteeing that $x\in S=\SL_n(q)$. Then $U$ is a group of size $q^3$ and $U\rtimes T$ acts $2$-transitively on $\Lambda=W^U$, a set of size $q^3$. What is more $S^\Lambda$ is a subgroup of $\PGL(\langle v_{m}, v_{m+1}, v_{m+2}, v_{m+3}\rangle)=\PGL_4(q)$; hence $\Lambda$ is a beautiful subset.
 \end{proof}

\begin{prop}\label{prop2}Let $S=\SU_n(q)$ with $n\ge 3$ and $(n,q)\ne (3,2)$. Then $S$, in its action on the set $\Omega$ of right cosets of a subgroup $M$ in the Aschbacher family $\mathcal{C}_1$, has a beautiful subset except for the cases in Line~$4$,~$5$ and~$6$ of Table~$\ref{t: c1 sln}$.
  \end{prop}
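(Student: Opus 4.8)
The plan is to run the argument in parallel with the proof of Proposition~\ref{prop1}, using the description in~\cite{kl} of the $\mathcal{C}_1$-subgroups of $\SU_n(q)$: such an $M$ is either the stabilizer of a non-zero totally isotropic subspace $W$ with $\dim W\le n/2$ (a maximal parabolic), or the stabilizer of a non-degenerate subspace $W$ with $\dim W< n/2$. In both cases I will produce a beautiful subset via the recipe of \S\ref{s: classical method}: exhibit a diagonalizable subgroup $T\le M$ normalizing a unipotent subgroup $U\not\le M$ on which $T$ acts transitively and fixed-point-freely, so that $H:=U\rtimes T$ acts $2$-transitively on $\Lambda:=M^H$ by Lemma~\ref{l: 2t}; then bound $S^\Lambda$ away from $\Sym(\Lambda)$ through the natural embedding of $S^\Lambda$ into a $\PGL$ or $\mathrm{A\Gamma L}$ of a bounded-dimensional section of $V$.

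In the parabolic case I distinguish whether $W$ is maximal totally isotropic. If it is not, I enlarge $W$ to a totally isotropic $W^{+}$ with $\dim W^{+}=\dim W+1$; the totally isotropic $(\dim W)$-subspaces of $W^{+}$ are precisely the hyperplanes of $W^{+}$, and $\stab_S(W^{+})$ induces at least $\PSL_{\dim W+1}(q^{2})$ on them in its natural $2$-transitive action — never $\Alt$ or $\Sym$ — so $\Lambda$ is beautiful unless $\dim W=1$ and $q=2$ (where $\PSL_2(4)\cong\Alt(5)$ acts naturally on its five hyperplanes); in that sub-case I pass instead to a totally isotropic $3$-space when $n\ge 6$, leaving $\SU_4(2)$ and $\SU_5(2)$ to a direct computation. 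If $W$ is maximal totally isotropic, I contract by a codimension-$1$ totally isotropic subspace $W'$ of $W$ (the trivial subspace when $\dim W=1$): the residual non-degenerate hermitian space $(W')^{\perp}/W'$ has dimension $3$ when $n$ is odd, and $\stab_S(W')$ then induces $\mathrm{PGU}_3(q)$ on its $q^{3}+1$ isotropic points, which is never $\Alt$ or $\Sym$; it has dimension $2$ when $n$ is even, giving $\PGL_2(q)$ on $q+1$ isotropic points, which handles every $q\ge 5$. For $n$ even and $q\in\{2,3,4\}$ I instead take $U$ to be a root subgroup of order $q^{2}$ (or $q^{3}$ when $q=2$) inside the unipotent radical opposite to $M$, and read off $S^{\Lambda}\ne\Sym(\Lambda)$ from its action on a section; this leaves only $\SU_4(2)$ with $M$ the stabilizer of a totally isotropic $2$-space, which is Line~$4$.

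In the non-degenerate case, let $W'$ be a non-degenerate hyperplane of $W$ (take $W'=0$ if $\dim W=1$), let $\langle w_0\rangle=W\cap (W')^{\perp}$, and choose an isotropic vector $e\in (W')^{\perp}$ with $\langle e,w_0\rangle\ne 0$; this is possible because $W^{\perp}$ represents $-\varphi(w_0)$. The group $U$ of unitary transvections with centre $\langle e\rangle$ is isomorphic to $(\Fq,+)$, fixes $W'$ (since $e\perp W'$), and moves $W=W'\oplus\langle w_0\rangle$ inside the two-dimensional section $\langle w_0,e\rangle$; a diagonalizable element of $\stab_S(W')$ scaling a suitable hyperbolic pair of $(W')^{\perp}$ through $e$ normalizes $U$ and is transitive on $U\setminus\{1\}$. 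Then $\Lambda=W^{H}$ has size $q$, and $\Lambda\cup\{\langle e\rangle\}$ is a Baer subline of the projective line $\mathbb{P}^1(\Fqt)$, so $S^{\Lambda}$ embeds into the stabilizer of $\langle e\rangle$ inside the subline-stabilizer of $\mathrm{P\Gamma L}_2(q^{2})$ — hence into $\mathrm{A\Gamma L}_1(q)$, which for $q\ge 5$ is a proper subgroup of $\Sym(\Lambda)$ not containing $\Alt(\Lambda)$. For $q\in\{2,3,4\}$ I replace $U$ by a larger unipotent subgroup (built from several isotropic directions in $W^{\perp}$, or a short root subgroup) so that $|\Lambda|\ge 5$, with the same section argument; the residual exceptions are $\SU_4(2)$ with $\dim W=1$ and $\SU_5(2)$ with $\dim W=2$, which are Lines~$5$ and~$6$.

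The genuine obstacle will be the small-field cases — the maximal parabolic over $\Fq$ with $q\in\{2,3,4\}$, and the non-degenerate stabilizers over those same fields — where the first-choice $2$-transitive section either has fewer than five points or coincides with a full symmetric group, so one must make a more delicate choice of $U$ and then verify, directly from the geometry of the relevant section of $V$, that $S^{\Lambda}$ is visibly a proper subgroup of $\Sym(\Lambda)$ not containing $\Alt(\Lambda)$. Once this is arranged, only a short list of small groups remains, and \texttt{magma} confirms that among these exactly the configurations of Lines~$4$, $5$ and~$6$ of Table~\ref{t: c1 sln} admit no beautiful subset.
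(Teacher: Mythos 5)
Your outline of the parabolic case (enlarging a non-maximal totally isotropic $W$ and acting on the hyperplanes of $W^{+}$, contracting a maximal totally isotropic $W$ in odd dimension to a residual $3$-dimensional Hermitian space with $\GU_3(q)$ acting on its $q^3+1$ isotropic points) is sound and close to the paper. But your construction in the non-degenerate case fails. You take $U$ to be the group of unitary transvections $t_{e,\lambda}\colon v\mapsto v+\lambda\varphi(v,e)e$ with $e$ isotropic, $e\in(W')^{\perp}$, $\varphi(e,w_0)\neq 0$, and you want a diagonalizable $h\in\stab_S(W')$, scaling a hyperbolic pair $(e,f)$ of $(W')^{\perp}$ by $e\mapsto\mu e$, $f\mapsto \mu^{-q}f$, which lies in $M$ and acts transitively on $U\setminus\{1\}$. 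Conjugation by $h$ sends $t_{e,\lambda}$ to $t_{e,\lambda\mu^{-(q+1)}}$, so transitivity on $U\setminus\{1\}$ (an $\Fq^*$-torsor) requires $\mu^{q+1}$ to generate $\Fq^*$. But $h\in M=\stab_S(W)$ together with $h\in\stab_S(W')$ forces $h$ to stabilize $\langle w_0\rangle=W\cap(W')^{\perp}$; writing $w_0=\alpha e+\beta f+z$ with $z\in\langle e,f\rangle^{\perp}$ and $\beta\neq0$ (forced by $\varphi(e,w_0)\neq0$), the condition $h(w_0)\in\langle w_0\rangle$ yields $\mu=1$ when $z\neq 0$ and $\mu^{q+1}=1$ when $z=0$. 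Either way $h$ centralizes $U$, so $H=U\rtimes\langle h\rangle$ is not $2$-transitive on $W^U$ and Lemma~\ref{l: 2t} cannot be applied. This incompatibility is exactly why the paper's Case~2a is so delicate: it uses a Borel subgroup of an $\SU_3(q)$ acting on the $q^2$ non-degenerate points $\langle\gamma e_1+y\rangle$ of a \emph{degenerate} line through an isotropic point \emph{orthogonal} to $y$, whose point stabilizer acts by $\gamma\mapsto c^{2-q}\gamma$; this is $2$-transitive only when $\gcd(q-2,q^2-1)=1$, i.e.\ $3\nmid q+1$, and a separate $q$-point construction is needed otherwise. Your sketch contains no substitute for this, and it is precisely here that the exceptions in Lines~5 and~6 arise.

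The second gap is the case $W=E$ maximal totally isotropic with $n=2k$ and $q\in\{2,3,4\}$. The proposed ``root subgroup of order $q^2$ (or $q^3$ when $q=2$) in the opposite unipotent radical'' does not do the job: the relative root system of $\SU_{2k}(q)$ is of type $C_k$, whose root subgroups have order $q$ or $q^2$ only, so for $q=2$ there is no root subgroup of order $8$, and a product of short root subgroups is acted on diagonally (hence intransitively) by any torus of the Levi; even for $q=3$, $k=2$ the image of the relevant torus character on a short root subgroup has index $2$ in $\Fqt^{*}$, so transitivity on the $q^2-1$ non-identity elements fails. This leaves, e.g., $\SU_6(2)$ and $\SU_4(3)$ acting on maximal totally isotropic subspaces unhandled, and these are not exceptions in Table~\ref{t: c1 sln}. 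The paper's device is different and cleaner: the normalizer of the extension field subgroup $\SU_2(q^k)$ exhibits $E$ as an isotropic point over $\mathbb{F}_{q^{2k}}$ and produces a $\PGL_2(q^k)$-set of size $q^k+1$, which is beautiful for every such $(n,q)$ except $(4,2)$ (Line~4). You would need to import this idea, or something equivalent, to close the case.
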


\begin{proof}%[Proof of Table~$\ref{t: c1 sun}$]
Let $\B=(e_1,\dots, e_k, f_1,\dots, f_k, x)$ be a hyperbolic basis for $V$ (we omit the element $x$ if $n$ is even). We also let $E=\langle e_1,\ldots,e_k\rangle$ and $F=\langle f_1,\ldots,f_k\rangle$, recall that the span is over $\mathbb{K}=\mathbb{F}_{q^2}$.

As for the ``$\SL_n(q)$ analysis'', we have two cases to consider here, but of a different nature.

\smallskip

\noindent\textsc{Case 1: }$M$ is the stabilizer of a totally isotropic subspace $W$ of $V$.

\smallskip

\noindent Without loss of generality, we may assume that $W\leq E$, see~\cite{kl}.

\smallskip

\noindent\textsc{Case 1a: }$W<E$ and $(\dim(E),q)\neq (2,2)$.

\smallskip

\noindent From Table~$\ref{t: c1 sln}$, we see that $\SL(E)=\SL_k(q^2)$, in its action on the $\mathbb{F}_{q^2}$-subspaces of $E$ of dimension $\dim_{\mathbb{F}_{q^2}}(W)$, has a beautiful subset $\Lambda$. In particular, $\Lambda$ is a family of totally isotropic subspaces of $E$ of the same dimension of $W$. Consider $E'=\langle W'\mid W'\in \Lambda\rangle$. Clearly, $S_\Lambda\leq \stab_S(E')$ and observe that, since $E'\leq E$, the group $\stab_S(E')$ induces $\GL(E')$  on $E'$. From this it follows immediately that the action induced by $S_\Lambda$ on $\Lambda$ is permutation isomorphic to the action of $\GL(E)$ on $\Lambda$; therefore $\Lambda$ is a beautiful subset of $S$.
 
\smallskip

\noindent\textsc{Case 1b: } $W<E$ and $(\dim (E),q)=(2,2)$.

\smallskip

\noindent Then $\dim(W)=1$ and $S=\SU_4(2)$ or $S=\SU_5(2)$. Let $H=\stab_S(\langle e_1+\alpha f_1\rangle)$, where $\alpha\in \mathbb{F}_4\setminus \mathbb{F}_2$, and observe that $H$ is the stabilizer of a non-isotropic $1$-dimensional subspace of $V$. Now, an easy computation with \texttt{magma} shows that $H$ has an orbit $\Lambda$ of size $9$, $|S^\Lambda|=216$ and the action of $H$ on $\Lambda$ is $2$-transitive; therefore $\Lambda$ is a beautiful subset of $S$. A similar computer computation yields that $\SU_5(2)$ has a beautiful subset of size $9$.

\smallskip

\noindent\textsc{Case 1c: }$W=E$ and $n=2k+1$.

\smallskip

\noindent Let $W'$ be a subspace of $W$ with $\dim(W')=k-1$ and set $$\Lambda=\{W''\leq V\mid \dim(W'')=k,W'\leq W'',\,W''\mathrm{\,totally\,isotropic} \}.$$
Observe that, since each element $W''\in \Lambda$ is totally isotropic, we have $W'\leq W''\le (W'')^\perp\le (W')^\perp$. Now $\dim((W')^\perp)=n-\dim (W')=n-k+1$, $\dim((W')^\perp/W')=n-2k+2=3$ and the Hermitian form on $V$ induces a non-degenerate Hermitian form on $(W')^\perp/W'$. Moreover, for each $W''\in \Lambda$, $W''/W'$ is a $1$-dimensional isotropic subspace of $(W')^\perp/W'$ and hence 
\[|\Lambda|=
q^3+1>5. 
\]
Consider $K=\stab_S((W')^\perp/W')$. For $n=3$, we have $K=S=\SU_3(q)$ because $W'=0$ and $(W')^\perp=V$. For $n>3$, an immediate application of Witt's lemma shows that $K$ induces the group $\GU((W')^\perp/W')$ in its natural action on $(W')^\perp/W'$. From~\cite{kl}, we see that both $\SU_3(q)$ and $\GU_3(q)$ act $2$-transitively on the set of totally isotropic $1$-dimensional subspaces; therefore $\Lambda$ is a beautiful subset of $S$.% provided that $(\dim((W')^\perp/W'),q)\notin \{(2,2),(2,3),(2,4)\}$: recall that $\PSU_2(q)\cong\PSL_2(q)$. Observe that when $\dim((W')^\perp/W')=2$ we have $n=2k$. 

\smallskip

\noindent\textsc{Case 1d: }$W=E$ and $n=2k$.

\smallskip

\noindent The group $S=\SU_{2k}(q)$ contains the extension field subgroup $S'=\SU_2(q^k)$ and its normalizer $H=\Norm S{S'}$: replacing $S'$ by a suitable $S$-conjugate we may view $E$ as  a $1$-dimensional totally isotropic subspace over the extension field $\mathbb{F}_{q^{2k}}$. Let $\Lambda=E^{H}$ and observe that $S_\Lambda=H$, moreover, the action of $H$ on $\Lambda$ is permutation isomorphic to the $2$-transitive action of the $2$-dimensional unitary group $\SU_2(q^k)\cong \SL_2(q^k)$ on the totally isotropic subspaces. Thus $\Lambda$ is a beautiful subset of $S$ provided that $|\Lambda|=q^k+1>5$, that is, $(n,q)\neq (4,2)$. The group $\SU_4(2)$ is listed in Line~4 of Table~\ref{t: c1 sln}.

\smallskip

\noindent \textsc{Case 2: }$M=\stab_S(W)$, where $W$ is a non-degenerate subspace of $V.$

\smallskip

\noindent Observe that \cite[Table 3.5.B]{kl} implies that $\dim(W)< \frac{n}{2}$.

\smallskip

\noindent \textsc{Case 2a: }$\dim (W)=1$.

\smallskip

\noindent Write $y=x$ when $n=3$, and $y=e_2+\alpha f_2$ (for a chosen $\alpha\in \mathbb{F}_{q^2}$ with $\alpha+\alpha^q\neq 0$) when $n>3$. Now consider $W'=\langle e_1,y,f_1\rangle$ and observe that the Hermitian form on $V$ restricts to a non-degenerate Hermitian form on the $3$-dimensional vector space $W'$. Now, consider the subgroup $H$ of $\stab_S(W')$ that induces on $W'$ (with basis given by $e_1,y,f_1$) the matrices of the form
\[\begin{pmatrix}c&-cb^{q}&ca\\0&c^{q-1}&c^{q-1}b\\0&0&c^{-q}\end{pmatrix} \,\textrm{with }a,b,c\in\mathbb{F}_{q^2},\, a+a^q+b^{q+1}=0\textrm{  and  }c\ne 0.
\]
Observe that the matrix group induced by $H$ on $W'$ has order $q^3(q^2-1)$ and  is the stabilizer of the $1$-dimensional totally isotropic subspace $\langle e_1\rangle$. (In other words, we are defining $H=\stab_S(W')\cap \stab_S(\langle e_1\rangle)$.) Moreover, $\Lambda=\{\langle \gamma e_1+y\rangle\mid \gamma\in\mathbb{F}_{q^2}\}$ is a set of $q^2$ non-degenerate $1$-dimensional subspaces of $V$ contained in $\langle e_1,y\rangle\leq W'$. By construction $H$ acts transitively on $\Lambda$, moreover the stabilizer $K$ in $H$ of the element $\langle y\rangle$ induces on $W'$ the matrix group formed by the diagonal matrices:
\[
\begin{pmatrix}c&0&0\\0&c^{q-1}&0\\0&0&c^{-q}\end{pmatrix}\,\textrm{with }c\in\mathbb{F}_{q^2}\setminus\{0\}.
\]
A quick computation shows that $\langle e_1+y\rangle^K=\left\{\langle c^{-q+2}e_1+y\rangle\mid c\in \mathbb{F}_{q^2}\setminus\{0\}\right\}$. Therefore $H$ acts $2$-transitively on $\Lambda$ provided that this set equals $\Lambda\setminus\{y\}$, that is, $\{c^{-q+2}\mid c\in \mathbb{F}_{q^2}\setminus\{0\}\}=\mathbb{F}_{q^2}\setminus\{0\}$. Clearly, this happens only when $\gcd(q^2-1,q-2)=1$. It is easy to see that $\gcd(q^2-1,q-2)=1$ when $3\nmid q+1$. In particular, when $3\nmid q+1$, $\Lambda$ is a beautiful subset for the action of $H$ on the non-degenerate $1$-dimensional subspaces of $W'$. From this, it follows easily that $\Lambda$ is also a beautiful subset of $S$.

Assume now that $\dim (W)=1$, $3\mid q+1$ and $q\neq 2$. Now the argument is exactly the same as above, but considering the subgroup $H$ of $\stab_S(W')$ that induces on $W'$ (with basis given by $e_1,y,f_1$) the matrices of the form
\[\begin{pmatrix}c&-cb&ca\\0&1&b\\0&0&c^{-1}\end{pmatrix} \,\textrm{with }b,c\in\mathbb{F}_{q},\,a\in \mathbb{F}_{q^2},\,\, a+a^q+b^2=0\textrm{  and  }c\ne 0.
\]
Observe that the matrix group induced by $H$ on $W'$ has order $q^2(q-1)$ and that $\Lambda=\{\langle \gamma e_1+y\rangle\mid \gamma\in\mathbb{F}_{q}\}$ is a set of $q$ non-degenerate $1$-dimensional subspaces of $V$ contained in $\langle e_1,y\rangle\leq W'$. By construction $H$ acts transitively on $\Lambda$, moreover the stabilizer $K$ in $H$ of the element $\langle y\rangle$ induces on $W'$ the matrix group formed by the diagonal matrices:
\[
\begin{pmatrix}c&0&0\\0&1&0\\0&0&c^{-q}\end{pmatrix}\,\textrm{with }c\in\mathbb{F}_{q}.
\]
Another computation shows that $\langle e_1+y\rangle^K=\left\{ce_1+y\mid c\in \mathbb{F}_{q}\setminus\{0\}\right\}$. Therefore $H$ acts $2$-transitively on $\Lambda$. In particular, when $q\ne 2$, we have $|\Lambda|=q+1\ge 5$ and $\Lambda$ is a beautiful subset for the action of $H$ on the non-degenerate $1$-dimensional subspaces of $W'$. From this, it follows easily that $\Lambda$ is also a beautiful subset of $S$.

Assume now that $\dim (W)=1$ and $q=2$. As $\SU_3(2)$ is soluble, we may assume that $n>3$: we observe here that $\SU_3(2)$ has no beautiful subset. Moreover, with a calculation with \texttt{magma} we see that also $\SU_4(2)$ has no beautiful subset: the group $\SU_4(2)$ is listed in Line~5 of Table~\ref{t: c1 sln}. Consider, for a moment, the case $n=5$, and let $\Lambda=\{\langle x+v\rangle\mid v\in \langle e_1,e_2\rangle\}$. Then $|\Lambda|=|\langle e_1,e_2\rangle|=4^2=16$. Now, a computation in $\SU_5(2)$ gives that $S_\Lambda=\stab_S(\langle e_1,e_2\rangle)\cap\stab_S(\langle e_1,e_2,x\rangle)=\stab_S(\langle e_1,e_2\rangle)$ and that the action of $S_\Lambda$ on $\Lambda$ is permutation isomorphic to the natural $2$-transitive action of the affine general linear group $\AGL_2(4)$; therefore $\Lambda$ is a beautiful subset of $S$. The construction when $n>5$ is entirely similar and the vector $x$ is replaced by a non-isotropic vector orthogonal to $\langle e_1,e_2\rangle$.

\smallskip

\noindent \textsc{Case 2b: }$\dim (W)>1$.

\smallskip

\noindent For the time being, we exclude the case $(n,q)=(5,2)$. Fix $W'\leq W$ with $\dim (W')=\dim (W)-1$ and $W'$ non-degenerate, and consider $W''=(W')^\perp$. Clearly, $\dim (W'')=n-\dim (W)+1\ge 4$, because $\dim (W)<n/2$ and $\dim (W)>1$. Now, $W''$ is also non-degenerate and $\stab_S(W')=\stab_S(W'')$ induces the matrix group $\GU(W'')$ on $W''$. Observe that from the previous discussion, the group $\SU(W'')$ admits a beautiful subset $\Lambda'$ for the action on the $1$-dimensional non-degenerate subspaces of $W''$, provided that $(\dim (W''),q)\neq (4,2)$. When $(\dim (W''),q)=(4,2)$, we have $(n,q)=(5,2)$ and we are excluding this case for the moment.
Let $\Lambda=\{W'\oplus \langle v\rangle\mid \langle v\rangle\in \Lambda'\}$. By construction, $\Lambda$ consists of non-degenerate subspaces of $V$ of the same dimension of $W$. Moreover, $\bigcap_{U\in\Lambda}U=W'$ and hence $S_\Lambda\leq \stab_S(W')$; from this it follows that $S_\Lambda=\stab_S(W')\cap S_{\Lambda'}$. Furthermore, the action of $S_\Lambda$ on $\Lambda$ is permutation isomorphic to the action of $S_\Lambda$ on $\Lambda'$ and therefore $\Lambda$ is a beautiful subset of $S$.

Finally, when $(n,q)=(5,2)$ and $\dim (W)=2$, a computation with \texttt{magma} shows that $S$ has no beautiful subset: this exception is listed in Line~6 of Table~\ref{t: c1 sln}. 
 \end{proof}

\begin{prop}\label{prop3}Let $S=\Sp_n(q)$ with $n\ge 4$. Then $S$, in its action on the set $\Omega$ of right cosets of a subgroup $M$ in the Aschbacher family $\mathcal{C}_1$, has a beautiful subset except for the cases in Line~$7$ and~$8$ of Table~$\ref{t: c1 sln}$.
  \end{prop}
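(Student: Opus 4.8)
The plan is to follow the template of Propositions~\ref{prop1} and~\ref{prop2}: produce beautiful subsets using Lemmas~\ref{l: singers} and~\ref{l: 2t} together with the recipe of~\S\ref{s: classical method}, and dispatch a short residual list by computer. By~\cite[Table~3.5.C]{kl}, $M$ is the stabilizer in $S$ of a proper nonzero subspace $W$ of $V$ that is either totally isotropic — so $M$ is the maximal parabolic $P_m$, where $m:=\dim W\in\{1,\dots,k\}$ and $k:=n/2$ — or non-degenerate of even dimension $2a<n/2$, so $M\cong\Sp_{2a}(q)\perp\Sp_{n-2a}(q)$. I fix a hyperbolic basis $e_1,\dots,e_k,f_1,\dots,f_k$, put $E:=\langle e_1,\dots,e_k\rangle$, write $\omega_0$ for the coset $M$, and arrange that $W=\langle e_1,\dots,e_m\rangle$ in the parabolic case and $W=\langle e_1,\dots,e_a,f_1,\dots,f_a\rangle$ in the non-degenerate case.

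\textbf{The generic construction.} Given an integer $c\ge1$ I will build a beautiful subset of size $q^c$ as follows. Let $E_0$ be a $c$-dimensional totally isotropic subspace spanned by basis vectors $e_i\notin W$ — concretely $E_0=\langle e_{m+1},\dots,e_{m+c}\rangle\le E$ with $c\le k-m$ in the parabolic case, and $E_0=\langle e_{a+1},\dots,e_{a+c}\rangle\le W^\perp$ with $c\le(n-2a)/2$ in the non-degenerate case. There is an elementary abelian unipotent subgroup $U\le S$, isomorphic to $(E_0,+)$, whose non-identity elements move $\langle e_1\rangle$ into $\langle e_1\rangle+E_0$, so that $U\cap M=1$; and, by Lemma~\ref{l: singers}(1) applied inside the symplectic group of the relevant middle section and the block-diagonal embedding trivial on $W$, there is a cyclic $C\le M$ normalizing $U$ and inducing a Singer cycle on $E_0\cong U$, hence acting transitively and fixed-point-freely on $U\setminus\{1\}$. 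Thus $H:=U\rtimes C\cong E_{q^c}\rtimes C_{q^c-1}$ is a Frobenius group, sharply $2$-transitive on $q^c$ points, with $H\cap M=C$, and Lemma~\ref{l: 2t} gives that $H$ acts $2$-transitively on $\Lambda:=\omega_0^H$, a set of size $q^c$. Finally $\bigcap_{\delta\in\Lambda}\delta$ and $\sum_{\delta\in\Lambda}\delta$ are $S_\Lambda$-invariant subspaces $W'\subset\tilde W$ with $\dim(\tilde W/W')=c+1$, and the members of $\Lambda$ are exactly the $q^c$ one-spaces of $\tilde W/W'$ lying off a fixed hyperplane; so $S^\Lambda$ embeds into $\mathrm{A\Gamma L}_c(q)$ in its natural action, which fails to contain $\Alt(q^c)$ as soon as $q^c\ge5$. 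Hence $\Lambda$ is $S$-beautiful whenever some admissible $c$ has $q^c\ge5$. This at once handles the non-degenerate case (there $(n-2a)/2\ge a+1\ge2$, so $c=2$ works for $q\ge3$ and $c=3$ for $q=2$ unless $n-2a=4$) and the parabolics $P_m$ with $m\le k-2$ (where $c=k-m\ge2$).

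\textbf{End cases.} It remains to treat $P_1$, $P_{k-1}$, the Siegel parabolic $P_k$, and a bounded list of small groups. For $P_1$ and $P_{k-1}$ I would instead take $\Lambda$ to be, respectively, the set of all $1$-dimensional or all $(k-1)$-dimensional subspaces of $E$; then $\Lambda$ determines $E$, so $S_\Lambda=\stab_S(E)$ induces $\GL(E)=\GL_k(q)$ on $E$ and $S^\Lambda\cong\PGL_k(q)$ in its natural $2$-transitive action of degree $(q^k-1)/(q-1)$ on points, resp.\ hyperplanes. For $k\ge3$ this degree is at least $7$ and $\PGL_k(q)$ is neither $\Alt$ nor $\Sym$, so $\Lambda$ is beautiful; for $k=2$ it is beautiful exactly when $q\ge5$. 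For $P_k$ (and, when $q\le 4$, for the parabolic actions of $\Sp_4(q)$) I would reduce the dimension: for $W'\le W$ of codimension $c'$, the totally isotropic $m$-spaces of $V$ containing $W'$ are in natural bijection with the totally isotropic $c'$-spaces of the non-degenerate symplectic space $(W')^\perp/W'$ of dimension $2(k-m+c')$, on which $\stab_S(W')$ induces the full symplectic group (Witt's lemma); hence a beautiful subset for the $P_{c'}$-action of $\Sp_{2(k-m+c')}(q)$ pulls back to an $S$-beautiful subset of $\Omega$. Taking $c'=1$ reduces $P_k$ to $\SL_2(q)$ on $q+1$ points, beautiful for $q\ge5$; for $q\in\{2,3,4\}$ the reduction terminates at a short explicit list of parabolic actions of $\Sp_4(q)$ and $\Sp_6(2)$, which — together with the lone non-degenerate leftover, $\Sp_6(2)$ on the cosets of $\Sp_2(2)\perp\Sp_4(2)$ — I would check with \texttt{magma}. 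This is where the genuine exceptions emerge, namely $\Sp_4(2)'$ on its $P_1$- and $P_2$-actions and $\Sp_4(3)$ on its $P_1$-action (Lines~7 and~8 of Table~\ref{t: c1 sln}), every other residual action being found to admit a beautiful subset.

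\textbf{Main obstacle.} The construction of $H$ and the identification of $S^\Lambda$ inside $\mathrm{A\Gamma L}_c(q)$ or $\PGL_k(q)$ are routine linear algebra. The real work is organising the end cases so that the recursion is finite: one must choose $c$ (or the reduction codimension $c'$) uniformly so that, outside an explicit finite list, the constructed $2$-transitive action has degree exceeding $4$ and is not $\Alt$; and one must then verify computationally that, among the finitely many residual actions of $\Sp_4(q)$ ($q\in\{2,3,4\}$) and $\Sp_6(2)$, precisely the three recorded in Lines~7 and~8 admit no beautiful subset.
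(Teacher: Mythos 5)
Your strategy is genuinely different from the paper's: where the paper imports beautiful subsets for the parabolic actions from the $\SL_k(q)$ subspace actions via the Levi subgroup $\GL(E)$ (using the extension-field subgroup $\Sp_2(q^k)$ for the Siegel parabolic and the full point-stabiliser, acting as $\mathbb{F}_q^{n-2}\rtimes\Sp_{n-2}(q)$, for non-degenerate $2$-spaces), you run a uniform unipotent-times-Singer-torus construction; this is in fact much closer to what the paper does for the \emph{orthogonal} groups. The construction itself is sound: $U\cap M=1$, $H\cap M=C$, Lemma~\ref{l: 2t} gives $2$-transitivity on $\omega_0^H$, and the identification of $\Lambda$ with the points of $\tilde W/W'$ off a fixed hyperplane pins $S^\Lambda$ inside $\AGL_c(q)$.

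There is, however, a concrete hole. In the parabolic case the admissible $c$ is at most $k-m$, so for $q=2$ and $m=k-2$ your generic construction yields only $q^c=4$ points, and $\AGL_2(2)=\Sym(4)$ contains no beautiful subset of size $4$. This is an infinite family --- $\Sp_{2k}(2)$ acting on totally isotropic $(k-2)$-spaces for every $k\ge 4$ --- and it is covered neither by your claim that the generic construction ``at once handles the parabolics $P_m$ with $m\le k-2$'' (which silently assumes $q^{k-m}\ge 5$) nor by your end cases, which treat only $P_1$, $P_{k-1}$, $P_k$ and a bounded list of small groups. You noticed and patched the exactly analogous problem in the non-degenerate case by passing to $c=3$ when $q=2$, but no such room exists here. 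The fix is available inside your own framework: apply your codimension-$c'$ reduction with $c'=1$ to land on the $P_1$-action of $\Sp_6(2)$, which does admit a beautiful subset (the seven points of a totally isotropic $3$-space, on which the stabiliser induces $\PGL_3(2)$); but as written the case is simply missing. A second, smaller issue: your pullback step ``a beautiful subset for the $P_{c'}$-action of $\Sp_{2(k-m+c')}(q)$ pulls back to an $S$-beautiful subset'' needs the extra hypothesis that the members of the downstairs beautiful subset $\Lambda'$ intersect in $0$, so that $\bigcap_{\delta\in\Lambda}\delta=W'$ and hence $S_\Lambda\le\stab_S(W')$; without this, $S_\Lambda$ need not be confined to $\stab_S(W')$ and $S^\Lambda$ could a priori exceed the $2$-transitive group you control. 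The condition holds for the specific subsets your recursion produces, but it must be imposed explicitly.
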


\begin{proof}%[Proof of Table~$\ref{t: c1 spn}$]
 Let $\mathcal{B}=(e_1,\dots, e_k, f_1,\dots, f_k)$ be a hyperbolic basis for $V$. We also let $E=\langle e_1,\ldots,e_k\rangle$ and $F=\langle f_1,\ldots,f_k\rangle$. As $n\geq 4$, we have $k\geq 2$.

 As for the ``$\SU_n(q)$ analysis'', we have two cases to consider here.

 \smallskip

 \noindent\textsc{Case 1: }$M$ is the stabilizer of a totally isotropic subspace $W$ of $V$.

 \smallskip

 \noindent Without loss of generality, we may assume that $W\leq E$, see~\cite{kl}. The argument in this case is similar to the case ``$S=\SU_n(q)$" and hence we skip the parts of the proof that follow closely the argument for $\SU_n(q)$. 

 \smallskip

 \noindent \textsc{Case 1a: }$W<E$.

 \smallskip

 \noindent For the time being, we exclude the case that $(\dim(E),q)\notin \{(2,2),(2,3),(2,4)\}$. From Table~$\ref{t: c1 sln}$, we see that $\SL(E)=\SL_k(q)$, in its action on the $\mathbb{F}_{q}$-subspaces of $E$ of dimension $\dim_{\mathbb{F}_{q}}(W)$, has a beautiful subset $\Lambda$. In particular, $\Lambda$ is a family of totally isotropic subspaces of $E$ of the same dimension of $W$. Consider $E'=\langle W'\mid W'\in \Lambda\rangle$. Clearly, $S_\Lambda\leq \stab_S(E')$ and observe that, since $E'\leq E$, the group $\stab_S(E')$ induces $\GL(E')$  on $E'$. From this it follows immediately that the action induced by $S_\Lambda$ on $\Lambda$ is permutation isomorphic to the action of $\GL(E)$ on $\Lambda$; therefore $\Lambda$ is a beautiful subset of $S$.
 
Suppose that $W<E$ and $(\dim (E),q)\in \{(2,2),(2,3),(2,4)\}$. Then $\dim(W)=1$ and $S\in \{\Sp_4(2)',\Sp_4(3),\Sp_4(4)\}$. A computation with \texttt{magma} shows that $\Sp_4(2)'$ and $\Sp_4(3)$ have no beautiful subset (these exceptions are in Line~7 of Table~\ref{t: c1 sln}), and $\Sp_4(4)$ has  a beautiful subset of size $16$.

%Suppose that $W=E$. Let $W'$ be a subspace of $W$ with $\dim(W')=k-1$ and set $$\Lambda=\{W''\leq V\mid \dim(W'')=k,W'\leq W,\,W''\mathrm{\,totally\,isotropic} \}.$$
%Observe that, since each element $W''\in \Lambda$ is totally isotropic, we have $W'\leq W''\le (W'')^\perp\le (W')^\perp$. Now $\dim((W')^\perp)=n-\dim (W')=n-k+1$, $\dim((W')^\perp/W')=n-2k+2=2$ and the symplectic form on $V$ induces a non-degenerate symplectic form on $(W')^\perp/W'$. Moreover, for each $W''\in \Lambda$, $W''/W'$ is a $1$-dimensional subspace of $(W')^\perp/W'$ and hence 
%$|\Lambda|=q+1$. 
%Consider $K=\stab_S((W')^\perp/W')=\stab_S(W')=\stab_S((W')^\perp)$. An immediate application of Witt's lemma shows that $K$ induces the group $\SL((W')^\perp/W')$ in its natural action on $(W')^\perp/W'$. Clearly, $\SL_2(q)$ acts $2$-transitively on the set of $1$-dimensional subspaces; therefore $\Lambda$ is a beautiful subset of $S$ provided that $(\dim((W')^\perp/W'),q)\notin \{(2,2),(2,3),(2,4)\}$. Observe that when $\dim((W')^\perp/W')=2$ we have $n=2k$. 

\smallskip

\noindent\textsc{Case 1b: }$W=E$.

\smallskip

\noindent The group $S=\Sp_{2k}(q)$ contains the extension field subgroup $S'=\Sp_2(q^k)$ and its normalizer $H=\Norm S{S'}$: replacing $S'$ by a suitable $S$-conjugate we may view  $E$ as  a $1$-dimensional totally isotropic subspace over the extension field $\mathbb{F}_{q^{k}}$. Let $\Lambda=E^{H}$ and observe that $S_\Lambda=H$; moreover, the action of $H$ on $\Lambda$ is permutation isomorphic to the $2$-transitive action of the $2$-dimensional symplectic group $\Sp_{2}(q^k)$ on the $1$-dimensional subspaces. Thus $\Lambda$ is a beautiful subset of $S$ provided that $|\Lambda|=q^k+1>5$, that is $(n,q)\neq (4,2)$. With \texttt{magma} we see that $\Sp_4(2)'$ has no beautiful subset: this example is in Line~8 of Table~\ref{t: c1 sln}.

\smallskip

\noindent\textsc{Case 2: }$M=\stab_S(W)$, where $W$ is a non-degenerate subspace of $V$.

\smallskip

\noindent Observe that $\dim (W)$ is even and that~\cite[Table 3.5.B]{kl} implies $\dim(W)< \frac{n}{2}$.

\smallskip

\noindent \textsc{Case 2a: }$\dim (W)=2$.

\smallskip

\noindent Observe that $n\geq 6$. Replacing $W$ by a suitable $S$-conjugate we may assume that $W=\langle e_1,f_1\rangle$. Let $H=\stab_S (\langle e_1\rangle)$ and observe that each element $x\in H$ acts as a scalar on $\langle e_1\rangle$, as the inverse of this scalar on $V/\langle e_1\rangle^\perp$ and induces an arbitrary element of $\Sp_{n-2}(q)$ on $\langle e_1\rangle^\perp/\langle e_1\rangle$.
It follows that $\Lambda=W^H=\{\langle e_1,f_1+v\rangle\mid v\in \langle e_2,\ldots, e_k,f_2,\ldots,f_k\rangle\}$ has size $q^{n-2}\ge 2^{6-2}=16$ and that $H$ acts as the affine $2$-transitive group $\mathbb{F}_q^{n-2}\rtimes \Sp_{n-2}(q)$ on $\Lambda$: recall that $\Sp_{2n-2}(q)$ acts transitively on the vectors in $\mathbb{F}_q^{n-2}\setminus\{0\}$, see~\cite{kl}. Moreover, by construction $H$ is maximal in $S$ and hence $S_\Lambda=H$; therefore $\Lambda$  is a beautiful subset.

\smallskip

\noindent\textsc{Case 2b: }$\dim (W)>2$.

\smallskip

\noindent Fix $W'\leq W$ with $\dim (W')=\dim (W)-2$ and $W'$ non-degenerate, and consider $W''=(W')^\perp$. Clearly, $\dim (W'')=n-\dim (W)+2\ge 6$, because $\dim (W)<k$ and $\dim (W)>2$. Now, $W''$ is also non-degenerate and $\stab_S(W')=\stab_S(W'')$ induces the matrix group $\Sp(W'')$ on $W''$. Observe that from the previous discussion, the group $\Sp(W'')$ admits a beautiful subset $\Lambda'$ for the action on the $2$-dimensional non-degenerate subspaces of $W''$. 
Let $\Lambda=\{W'\oplus X\mid X\in \Lambda'\}$. By construction, $\Lambda$ consists of non-degenerate subspaces of $V$ of the same dimension of $W$. Moreover, $\bigcap_{U\in\Lambda}U=W'$ and hence $S_\Lambda\leq \stab_S(W')$; from this it follows that $S_\Lambda=\stab_S(W')\cap S_{\Lambda'}$. Furthermore, the action of $S_\Lambda$ on $\Lambda$ is permutation isomorphic to the action of $S_\Lambda$ on $\Lambda'$ and therefore $\Lambda$ is a beautiful subset of $S$.
 \end{proof}

\begin{prop}\label{prop4}Let $S$ be either $\Omega_n(q)$ with $n\ge 7$, or $\Omega_n^+(q)$ or $\Omega_n^-(q)$ with $n\ge 8$. Then $S$, in its action on the set $\Omega$ of right cosets of a subgroup $M$ in the Aschbacher family $\mathcal{C}_1$, has a beautiful subset except for the case in Line~$9$ of Table~$\ref{t: c1 sln}$.
  \end{prop}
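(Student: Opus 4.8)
The plan is to follow the template of Propositions~\ref{prop1}--\ref{prop3}. Fix a hyperbolic basis $\B=\{e_1,\dots,e_k,f_1,\dots,f_k\}\cup\mathcal{A}$ of $V$, put $E=\langle e_1,\dots,e_k\rangle$, and recall from~\cite{kl} that every subgroup $M$ in the family $\mathcal{C}_1$ is the stabiliser $\stab_S(W)$ of a single subspace $W$ of $V$, either totally singular (so that $M$ is a maximal parabolic $P_{\dim(W)}$) or non-degenerate; we treat these two cases in turn, writing $\Omega=S/M$ throughout.

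\emph{Case 1: $W$ totally singular.} We may assume $W\leq E$. If $W<E$ we argue exactly as in Case~1 of Propositions~\ref{prop2} and~\ref{prop3}: the argument of Case~1 of Proposition~\ref{prop1} produces a beautiful subset $\Lambda$ for the action of $\SL(E)\cong\SL_k(q)$ on the $\dim(W)$-dimensional subspaces of $E$ (valid for any $q$, since $k\geq 3$ throughout our range), and, setting $E'=\langle W'\mid W'\in\Lambda\rangle\leq E$ and using, as in Lemma~\ref{l: singers}, that $\stab_S(E')$ induces $\GL(E')$ on $E'$, one checks that $S^\Lambda$ is permutation isomorphic to $\GL(E)^\Lambda$, whence $\Lambda$ is $S$-beautiful. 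If $W=E$ — i.e.\ $M=P_k$, which for $S=\Omega_n^+(q)$ is the stabiliser of one of the two $S$-orbits of maximal totally singular subspaces — we pass to a residue. For $S=\Omega_n^+(q)$ choose $W'\leq E$ of codimension $3$ in $E$: then $(W')^\perp/W'\cong\mathrm{O}_6^+(q)$, the maximal totally singular subspaces of $V$ in the $S$-orbit of $E$ that contain $W'$ form one family of maximal totally singular $3$-spaces of the residue, and $\POmega_6^+(q)\cong\PSL_4(q)$ acts $2$-transitively on this family (permutation isomorphically to its action on the points of $\mathbb{P}^3(\Fq)$), so the corresponding $\Lambda$ is beautiful. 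For $S=\Omega_n(q)$ with $n$ odd one first stabilises a non-singular $1$-space $\langle v\rangle$ with $v^\perp$ of plus type, reducing to the case $W=E$ for $\Omega_{n-1}^+(q)$ (a beautiful subset there, being a set of maximal totally singular subspaces of $v^\perp$, spans a controlled subspace of $V$ and is also one for $S$); for $S=\Omega_n^-(q)$ one similarly stabilises a non-singular $1$-space and reduces to the case $W=E$ for $\Omega_{n-1}(q)$ (handled by the previous line, or, in characteristic $2$, by the extension-field construction of Proposition~\ref{prop3}). In every instance the induced degree is large and the induced group is patently not $\Alt$ or $\Sym$, so Case~1 produces no exceptions.

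\emph{Case 2: $W$ non-degenerate.} Here $\dim(W)<n/2$ by~\cite{kl}. If $\dim(W)>1$ we reduce to $\dim(W)=1$ exactly as in Case~2b of Propositions~\ref{prop2} and~\ref{prop3}: pick $W'\leq W$ non-degenerate of codimension $1$ in $W$ — choosing it, when $n-\dim(W)+1=7$ and $q=3$, so that the non-degenerate $1$-space $W\cap(W')^\perp$ is not of the exceptional type below — put $W''=(W')^\perp$, and take $\Lambda=\{W'\oplus U\mid U\in\Lambda'\}$, where $\Lambda'$ is a beautiful subset for the action on non-degenerate $1$-spaces of $W''$ of $\stab_S(W')=\stab_S(W'')$, which induces $\mathrm{O}(W'')$ on $W''$. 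So everything reduces to the case $\dim(W)=1$: write $W=\langle v\rangle$ with $v$ non-singular. If $v^\perp$ has maximal Witt index — equivalently, $M$ contains a maximal split torus $T$ of $S$; this happens for $S=\Omega_n^-(q)$ with $n$ even and for $S=\Omega_n(q)$ with $n$ odd and $v^\perp$ of plus type — we apply the method of \S\ref{s: classical method}: choosing a root subgroup $U\not\leq M$ on which $T$ acts transitively and fixed-point-freely, $H=U\rtimes T$ acts $2$-transitively on $\Lambda=\langle v\rangle^H$, a set of $q$ non-degenerate $1$-spaces spanning a plane, and since $S^\Lambda$ then embeds into the stabiliser of a point in $\PGL_2(q)$ it lies inside $\mathrm{A\Gamma L}_1(q)$, so $\Lambda$ is beautiful once $q\geq 5$.

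\emph{The crux and the main obstacle.} What remains — $v^\perp$ of non-maximal Witt index, which occurs for $S=\Omega_n^+(q)$ with $n$ even and for $S=\Omega_n(q)$ with $n$ odd and $v^\perp$ of minus type (the latter being $M$ of type $\mathrm{O}_{n-1}^-(q)\times\mathrm{O}_1(q)$), together with the small fields left over from the preceding cases — is the genuinely hard part, and is where the main obstacle lies: now $M$ contains no maximal split torus of $S$, the uniform construction of \S\ref{s: classical method} is unavailable, and one must instead build a $2$-transitive, non-alternating section by hand inside a carefully chosen low-dimensional non-degenerate subspace through $v$ — for instance an $\mathrm{O}_3(q)$- or $\mathrm{O}_5(q)$-section together with the stabiliser in it of a suitable singular or non-singular subspace — keeping track precisely of the arithmetic of $q$ to determine for which $(S,q)$ the construction succeeds. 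A bounded residual list of small groups is then settled with \texttt{magma}, and the only genuine failure is $S=\Omega_7(3)$ with $M$ of type $\mathrm{O}_6^-(3)\times\mathrm{O}_1(3)$, which is Line~$9$ of Table~\ref{t: c1 sln}; it is precisely this delicate, arithmetically sensitive analysis that throws up the exception.
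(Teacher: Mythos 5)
Your overall architecture (split on $W$ totally singular versus non-degenerate, residues for the parabolic case, unipotent-times-torus constructions elsewhere, with $\Omega_7(3)$ on cosets of $\mathrm{O}_6^-(3)\times\mathrm{O}_1(3)$ as the sole exception) matches the paper's, but Case 2 contains a genuine gap. First, your reduction to $\dim(W)=1$ by choosing a non-degenerate hyperplane $W'$ of $W$ is impossible in characteristic $2$: there $\dim(W)$ is even and every odd-dimensional subspace is degenerate for the associated bilinear form, so no such $W'$ exists; and even for $q$ odd the reduction lands in groups $\Omega_5(q)$, $\Omega_6^{\pm}(q)$ outside the scope of the proposition, whose exception lists differ. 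The paper performs no dimension reduction here at all; it treats every $\dim(W)$ at once, splitting instead on whether $W$ contains a hyperbolic line. Second, and more seriously, the part you label ``the crux'' is not ``a bounded residual list of small groups'': it contains the infinite families $\Omega_n^+(q)$ acting on non-singular points (all odd $q$), $\Omega_n(q)$ acting on non-singular points of minus type, and every non-degenerate-stabilizer action with $q\le 4$. For all of these you describe only the shape of an argument and then assert that the only failure is Line~9 of Table~\ref{t: c1 sln}. That assertion is precisely what must be proved; the paper proves it via the explicit constructions of Cases 2a--2e (concrete unipotent subgroups, Singer subtori supplied by Lemma~\ref{l: singers}, and embeddings of $S^\Lambda$ into small $\GL_m(q)$'s that cannot cover $\Alt(|\Lambda|)$), together with the \texttt{magma} computations of Case~0, which both dispose of $\Omega_7(3)$, $\Omega_8^{\pm}(2)$, $\Omega_{10}^-(2)$ and certify that the $\mathrm{O}_6^-(3)\times\mathrm{O}_1(3)$ action really admits no beautiful subset. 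Without that work the proof is missing exactly where the exception arises.

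Your Case 1 is closer to being a correct (and genuinely different) route -- the paper instead runs six subcases 1a--1f indexed by $q$ and $\dim(W)$ -- but it needs two repairs. For $W<E$ with $\dim(W)=k-1$ and $q\le 4$, the construction you import from Proposition~\ref{prop1} yields only $\PGL_2(q)$ acting on $q+1\le 5$ points, which is $\Sym(4)$, $\Alt(5)$ or $\Sym(5)$ and hence not beautiful; you must pass to the dual choice of $\Lambda$ (all hyperplanes of $E$, giving a $2$-transitive subgroup of $\PGL_k(q)$ of degree $(q^k-1)/(q-1)$ with $k\ge 3$). And Lemma~\ref{l: singers} only covers subspaces of dimension strictly less than $n/2$, so it does not give that $\stab_S(E)$ induces $\GL(E)$ on a \emph{maximal} totally singular $E$; for $q$ odd the induced group has index two in $\GL(E)$, which still contains $\SL(E)$ and so suffices, but this needs to be said. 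The residue arguments you propose for $W=E$ (descending to an $\mathrm{O}_6^+$ section and using $\POmega_6^+(q)\cong\PSL_4(q)$ on one family of totally singular planes) are a plausible alternative to the paper's Cases 1b, 1d and 1f, provided you verify that the setwise stabilizer of the resulting $\Lambda$ fixes the distinguished non-singular $1$-space in the odd-dimensional and minus-type cases.
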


\begin{proof}%[Proof of Tables~$\ref{t: c1 omegan}$, $\ref{t: c1 omegaplusn}$ and $\ref{t: c1 omegaminusn}$]
If $n$ is odd, then let $\B=(e_1,\dots, e_k, f_1,\dots, f_k, x)$ be a hyperbolic basis for $V$, and note that $n\geq 7$, and that $q$ is odd. If $n$ is even and $S=\Omega_n^-(q)$, then let $\B=(e_1,\dots, e_k, f_1,\dots, f_k, x, y)$ be a hyperbolic basis for $V$, and note that $n\geq 8$.
If $n$ is even and $S=\Omega_n^+(q)$, then let $\B=(e_1,\dots, e_k, f_1,\dots, f_k)$ be a hyperbolic basis for $V$, and note that $n\geq 8$. 

In all cases we write $E=\langle e_1,\dots, e_k\rangle$, and note that $E$ is a maximal totally singular subspace of $V$. % Our analysis is very similar to what has gone previously so we will omit some details in what follows.

\smallskip

\noindent\textsc{Case 0: } Some small groups.

\smallskip

\noindent It will be convenient to deal with some small cases first using \texttt{magma} \cite{magma}.

\textbf{ Suppose that $S=\Omega_7(3)$}. Here, the action of $S$ on the $1$-dimensional, $2$-dimensional and $3$-dimensional totally isotropic subspaces of $V$ has beautiful subsets of size $13$.  Moreover, $\Omega_7(3)$ in its action on the $1$-dimensional non-degenerate subspaces with stabilizers of type $\mathrm{O}_6^+(3)\times\mathrm{O}_1(3)$ has beautiful subsets of size $13$. However, $\Omega_7(3)$ in its action on the $1$-dimensional non-degenerate subspaces with stabilizers of type $\mathrm{O}_6^-(3)\times\mathrm{O}_1(3)$ has no beautiful subsets: this exception is listed in Line~9 of Table~\ref{t: c1 sln}. Furthermore, $\Omega_7(3)$ in both of its primitive actions on the $3$-dimensional non-degenerate subspaces (with stabilizers of type $\mathrm{O}_4^-(3)\times\mathrm{O}_3(3)$ or $\mathrm{O}_4^+(3)\times \mathrm{O}_3(3)$) has  beautiful subsets of size $5$. (Observe that from~\cite[Table~3.5C, column VI]{kl}, we may exclude from our study the actions of $\Omega_7(3)$ on the cosets of its subspace stabilizers of type $\mathrm{O}_2^+(3)\times \mathrm{O}_5(3)$ and $\mathrm{O}_2^-(3)\times\mathrm{O}_5(3)$.) 

\textbf{ Suppose that $S=\Omega_8^+(2)$}. In each primitive faithful action of $G=\Omega_8^+(2)$ and $G=\mathrm{SO}_8^+(2)$, we have checked that $G$ has beautiful subsets of size either $5$ or $7$.

\textbf{Suppose that $S=\Omega_8^-(2)$}. Here the group $\Omega_8^-(2)$ has beautiful subsets of size $7$ in its action on isotropic points, isotropic lines and isotropic planes; moreover, it has beautiful subsets of size $7$ on non-isotropic points, and beautiful subsets of size $5$ for the remaining actions on non-degenerate subspaces with stabilizers of type $\mathrm{O}_2^-(2)\times \mathrm{O}_6^+(2)$ or  $\mathrm{O}_4^-(2)\times \mathrm{O}_4^+(2)$. (Observe that from~\cite[Table~3.5F, column VI]{kl}, we may exclude from our study the action of $\Omega_8^-(2)$ on the cosets of its subspace stabilizers of type $\mathrm{O}_2^+(2)\times \mathrm{O}_6^-(2)$.)

\textbf{ Suppose that $S=\Omega_{10}^-(2)$}. 
We verify that $S$ has beautiful subsets of size $5$ in its action on the $2$-dimensional, $3$-dimensional and $4$-dimensional totally isotropic subspaces of $V$, and beautiful subsets of size $7$ in its action on the $1$-dimensional totally isotropic subspaces.

Our analysis from here excludes $\Omega_{7}(3)$, $\Omega_8^+(2)$ and $\Omega_8^-(2)$, and the primitive actions of $\Omega_{10}^-(2)$ on totally isotropic subspaces of $V$ of a given dimension.\footnote{The action of $\Omega_8^-(2)$ on the $2$-dimensional non-degenerate subspaces with stabilizers of type $\mathrm{O}_2^+(2)\times\mathrm{O}_6^-(2)$ is not primitive, see for instance~\cite[Table~3.5F]{kl}.  However, for the proof of \textsc{Case~2c} it is relevant to observe that this imprimitive action does admit a beautiful subset $\Delta$ with the property that $\bigcap_{W'\in\Delta}W'=0$. This fact can be verified with \texttt{magma}.\label{fn123}}

\smallskip

\noindent\textsc{Case 1: }$M$ is the stabilizer of a totally isotropic subspace $W$ of $V$. 

\smallskip

\noindent Write $\dim(W)=m$ and observe that $m\leq k$. Without loss of generality, we may assume that $W\leq E$, see~\cite{kl}. Indeed, we can take $W=\langle e_1, \dots, e_m\rangle$. 

\smallskip

\noindent\textsc{Case 1a: } $q\geq 5$ and $m<k$.

\smallskip

\noindent We define $W'=\langle e_1,\dots, e_{m-1}\rangle$ when $m>1$ and $W'=0$ when $m=1$, and 
\[
 \Lambda = \langle W''\leq E \mid \dim(W'')=\dim(W)=m, W'\subseteq W''\rangle.
\]
Then $S_\Lambda=\stab_S(W'')\cap \stab_S(E)$ and the action of $S^\Lambda$ on $\Lambda$ is permutation isomorphic to the $2$-transitive action of $\GL(E/W'')$ on the $1$-dimensional subspaces of $E/W''$; thus $\Lambda$ is a beautiful subset of $\Omega$.

\smallskip

\noindent\textsc{Case 1b: } $q\geq 5$ and $m=k$.

\smallskip

\noindent Observe first that $m>1$. In this case, define
\begin{align*}
 W_1 &= \langle e_2,\dots, e_m, f_1 \rangle\; \\
 W_2 &= \langle e_1,\dots, e_{m-1}, f_m \rangle.
\end{align*}
Now set
\[
 \Lambda = \left\langle W''\leq V \mid  \begin{array}{l} W'' \textrm{ totally singular}, \dim(W'')=\dim(W)=m, \\
 \dim(W''\cap W_1)=\dim(W''\cap W_2)=m-1\end{array}\right\rangle.
\]
Note that every element of $\Lambda$ contains $W_1\cap W_2=\langle e_2,\dots, e_{m-1}\rangle$.  Now, for each $1$-space $\langle v+(W_1\cap W_2)\rangle$ in the 2-dimensional space $W_1/(W_1\cap W_2)$, there is a $1$-space $\langle w+(W_1\cap W_2)\rangle$ in $W_2/(W_1\cap W_2)$ such that $\langle W_1\cap W_2,v,w\rangle$ is an element of $\Lambda$. It is easy to see that all elements of $\Lambda$ have this form, and so $|\Lambda|=q+1$.

This correspondence between elements of $\Lambda$ and $1$-spaces in $W_1/(W_1\cap W_2)$ also implies that the action of $S_\Lambda$ on $\Lambda$ induces the action of $\GL(W_1/(W_1\cap W_2))$ on $1$-spaces in $W_1/(W_1\cap W_2)$, thus we have a beautiful subset of size $q+1$.

\smallskip

\noindent\textsc{Case 1c: } $q\in\{3,4\}$ and $m<k-1$.

\smallskip

\noindent Thanks to \textsc{Case 0}, we may assume that $n\geq 8$. We take $U$ to be the subgroup of $S$ whose elements fix all elements of $\B$ except $e_m, f_{m+1}$ and $f_{m+2}$ and satisfy
 \[ 
 e_m \mapsto e_m+\alpha e_{m+1}+\beta e_{m+2}
 \]
for some $\alpha, \beta \in \Fq$. Take $\Lambda= W^U$ and take $T$ to be a maximal torus that preserves the subspace decomposition
\[
 \langle e_{m+1}, e_{m+2} \rangle \oplus \langle f_{m+1}, f_{m+2}\rangle \oplus \bigoplus\limits_{\begin{array}{cc} v\in\B \\ v\not\in\{e_{m+1}, e_{m+2}, f_{m+1}, f_{m+2}\}\end{array}} \langle v\rangle
\]
and acts as a Singer cycle on $\langle e_{m+1}, e_{m+2}\rangle$. Note that Lemma~\ref{l: singers} implies that such a torus exists; indeed, applying Lemma~\ref{l: singers} to $\langle e_m, f_m\rangle^\perp$, we see that $T$ contains a subtorus that acts as a Singer cycle on  $\langle e_{m+1}, e_{m+2}\rangle$, and acts trivially on $\langle e_m\rangle$. This implies, in particular, that $U\rtimes T$ acts 2-transitively on $\Lambda$, a set of size $q^2$. Now define
\begin{align*}
 W_1 &= \bigcap\limits_{W'\in \Lambda} W'=\langle e_1,\dots, e_{m-1}\rangle; \\
 W_2 &= \langle W' \mid W'\in\Lambda \rangle = \langle e_1,\dots, e_{m+2}\rangle
\end{align*}
and observe that $S_\Lambda$ stabilizes both $W_1$ and $W_2$. The action of $S_\Lambda$ on $\Lambda$ induces a homomorphism from some subgroup of $\GL(W_2/W_1)=\GL_3(q)$ onto $S^\Lambda$. If $\Lambda$ is not beautiful, then we obtain an epimorphism from a subgroup of $\GL_3(q)$ onto $\Alt(q^2)$, which is impossible (see, for instance,~\cite[Proposition 5.3.7]{kl}).

\smallskip

\noindent\textsc{Case 1d: } $q\in\{3,4\}$ and $m\in \{k-1, k\}$.

\smallskip

\noindent Once again, we may assume that $n\geq 8$. Suppose first that $m\leq 2$. Then $m=2=k-1$ and $S=\Omega_8^-(q)$. Let $W'$ be a subspace of $W$ with $\dim(W')=1$ and consider $$\Delta=\{W''\le V\mid W'< W'', \dim(W'')=2, W''\textrm{ totally isotropic}\}.$$
Observe that, since each element $W'\in \Delta$ is totally isotropic, we have $W'\le W''\le (W'')^\perp\le (W')^\perp$. Now $\dim((W')^\perp)=8-1=7$, $\dim((W')^\perp/W')=8-2=6$ and the orthogonal form on $V$ induces a non-degenerate orthogonal form on $(W')^\perp/W'$ of ``minus'' type. Moreover, for each $W''\in \Delta$, $W''/W'$ is a $1$-dimensional isotropic subspace of $(W')^\perp/W'$. Consider $K=\stab_S((W')^\perp/W')$. An immediate application of Witt's lemma shows that $K$ induces the group $\mathrm{SO}((W')^\perp/W')$ in its natural action on $(W')^\perp/W'$.  Now, recalling that $\Omega_6^-(q)\cong \mathrm{SU}_4(q)$ and that the action of $\Omega_6^-(q)$ on the $1$-dimensional totally isotropic subspaces of $(W')^\perp/W'$ is permutation isomorphic to the action of $\SU_4(q)$ on the $1$-dimensional totally singular subspaces of a $4$-dimensional Hermitian space, we deduce that $\Omega_8^-(q)$ has a beautiful subset from Proposition~\ref{prop2}.

From here, we assume that $m>2$. In this case we take $U$ to be the subgroup of $S$ whose elements fix all elements of $\B$ except $e_m, e_1$ and $e_2$ and satisfy
 \[ 
 e_m \mapsto e_m+\alpha f_1+\beta f_2
 \]
for some $\alpha, \beta \in \Fq$. Take $\Lambda= W^U$ and take $T$ to be a maximal torus that preserves the subspace decomposition
\[
 \langle e_1, e_2 \rangle \oplus \langle f_1, f_2\rangle \oplus \bigoplus\limits_{\begin{array}{cc} v\in\B \\ v\not\in\{e_1, e_2, f_1, f_2\}\end{array}} \langle v\rangle
\]
and acts as a Singer cycle on $\langle e_1, e_2\rangle$. Applying Lemma~\ref{l: singers} as before, we obtain that $U\rtimes T$ acts 2-transitively on $\Lambda$, a set of size $q^2$.  Now define
\begin{align*}
 W_1 &= \bigcap\limits_{W'\in \Lambda} W' = \langle e_3,\dots, e_{m-1}\rangle; \\
 W_2 &= \langle W' \mid W'\in\Lambda\rangle = \langle e_1,\dots, e_m, f_1, f_2,f_m\rangle
\end{align*}
and observe that $S_\Lambda$ stabilizes $W_1$ and $W_2$. The action of $S_\Lambda$ on $\Lambda$ induces a homomorphism from some subgroup of $\GL(W_2/W_1)=\GL_6(q)$ onto $S^\Lambda$. If $\Lambda$ is not beautiful, then we obtain an epimorphism from a subgroup of $\GL_6(q)$ onto $\Alt(q^2)$, which is impossible (see, for instance, \cite[Proposition 5.3.7]{kl}).
\smallskip

\noindent\textsc{Case 1e: } $q=2$ and $m<k-2$.

\smallskip

\noindent Here $n$ is even and, by \textsc{ Case 0}, $n\geq 10$. We take $U$ to be the subgroup of $S$ whose elements fix all elements of $\B$ except $e_m, f_{m+1},f_{m+2}$ and $f_{m+3}$ and satisfy
 \[ 
 e_m \mapsto e_m+\alpha e_{m+1}+\beta e_{m+2}+\gamma e_{m+3}
 \]
for some $\alpha, \beta,\gamma \in \Fq$. Take $\Lambda= W^U$ and take $T$ to be a maximal torus that preserves the subspace decomposition
\[
 \langle e_{m+1}, e_{m+2},e_{m+3} \rangle \oplus \langle f_{m+1}, f_{m+2},f_{m+3} \rangle \oplus \bigoplus\limits_{\begin{array}{cc} v\in\B \\ v\not\in\{e_{m+1}, e_{m+2}, e_{m+3}, f_{m+1}, f_{m+2}, f_{m+3}\}\end{array}}\!\!\!\!\!\!\!\!\!\! \langle v\rangle
\]
and acts as a Singer cycle on $\langle e_{m+1}, e_{m+2}, e_{m+3}\rangle$. Then $U\rtimes T$ acts 2-transitively on $\Lambda$, a set of size $8$. Now define
\begin{align*}
 W_1 &= \bigcap\limits_{W'\in \Lambda} W' = \langle e_1,\dots, e_{m-1}\rangle; \\
 W_2 &= \langle W' \mid W'\in\Lambda\rangle = \langle e_1,\dots, e_{m+3}\rangle
\end{align*}
and observe that $S_\Lambda$ stabilizes $W_1$ and $W_2$. The action of $S_\Lambda$ on $\Lambda$ induces a homomorphism from some proper subgroup $H$ of $\GL(W_2/W_1)=\GL_4(2)$ onto $S^\Lambda$. (The subgroup $H$ is \emph{proper} because, for instance, it stabilizes the 3-space $\langle e_{m+1}, e_{m+2}, e_{m+3},W_1\rangle/W_1$.) If $\Lambda$ is not beautiful, then we obtain an epimorphism from some proper subgroup of $\GL_4(2)$ onto $\Alt(8)$, which is impossible.
\smallskip

\noindent\textsc{Case 1f: } $q=2$ and $m\in \{k-2, k-1, k\}$.

\smallskip

\noindent Once again, we may assume that $n$ is even and $n\geq 10$. Suppose first that $m\leq 3$. Then either $m=3$ and $S\in\{\Omega_{10}^+(2),\Omega_{10}^-(2),\Omega_{12}^-(2)\}$, or $m=2$ and $S=\Omega_{10}^-(2)$. From Case~0, we may exclude the case that $S=\Omega_{10}^-(2)$. Hence $m=3$ and $S\in \{\Omega_{10}^+(2),\Omega_{12}^-(2)\}$. Let $W'$ be a subspace of $W$ with $\dim(W')=2$ and consider $$\Delta=\{W''\le V\mid W'< W'', \dim(W'')=3, W''\textrm{ totally isotropic}\}.$$
Observe that, since each element $W'\in \Delta$ is totally isotropic, we have $W'\le W''\le (W'')^\perp\le (W')^\perp$. Now $\dim((W')^\perp/W')=6$ when $S=\Omega_{10}^+(2)$ and $\dim((W')^\perp/W')=8$ when $S=\Omega_{12}^-(2)$; moreover, the orthogonal form on $V$ induces a non-degenerate orthogonal form on $(W')^\perp/W'$. Furthermore, for each $W''\in \Delta$, $W''/W'$ is a $1$-dimensional isotropic subspace of $(W')^\perp/W'$. Consider $K=\stab_S((W')^\perp/W')$. An immediate application of Witt's lemma shows that $K$ induces the group $\mathrm{SO}((W')^\perp/W')$ in its natural action on $(W')^\perp/W'$. Now, by Case~0, when $S=\Omega_{12}^-(2)$, the group $\mathrm{SO}((W')^\perp/W')\cong\mathrm{SO}_8^-(2)$ admits a beautiful subset $\Lambda'$ of size $7$ for the action on the $1$-dimensional totally isotropic subspaces of the $8$-dimensional space $(W')^\perp/W'$. Therefore, a moment's thought yields that $\Lambda=\{W'\in \Delta\mid W'/W''\in \Lambda'\}$ is a beautiful subset of size $7$ for $S$ in its action on the $3$-dimensional totally isotropic subspaces. Assume then $S=\Omega_{10}^+(2)$ and recall that $\Omega_6^+(q)\cong \PSL_4(q)$. By Proposition~\ref{prop1}, the group $\mathrm{SO}((W')^\perp/W')\cong\mathrm{SO}_6^+(2)$ admits a beautiful subset $\Lambda'$. Therefore, a moment's thought yields that $\Lambda=\{W'\in \Delta\mid W'/W''\in \Lambda'\}$ is a beautiful subset  of $S$ in its action on the $3$-dimensional totally isotropic subspaces.

Now suppose that $m>3$.  In this case we take $U$ to be the subgroup of $S$ whose elements fix all elements of $\B$ except $e_m, e_1, e_2$ and $e_3$ and satisfy
 \[ 
 e_m \mapsto e_m+\alpha f_1+\beta f_2+\gamma f_3
 \]
for some $\alpha, \beta, \gamma \in \Fq$. Take $\Lambda= W^U$ and take $T$ to be a maximal split torus that preserves the subspace decomposition
\[
 \langle e_1, e_2, e_3 \rangle \oplus \langle f_1, f_2, f_3\rangle \oplus \bigoplus\limits_{\begin{array}{cc} v\in\B \\ v\not\in\{e_1, e_2, e_3, f_1, f_2, f_3\}\end{array}} \langle v\rangle
\]
and acts as a Singer cycle on $\langle e_1, e_2, e_3\rangle$. Then $U\rtimes T$ acts 2-transitively on $\Lambda$, a set of size $8$. Now define
\begin{align*}
 W_1 &= \bigcap\limits_{W'\in \Lambda} W' = \langle e_4,\dots, e_{m-1}\rangle; \\
 W_2 &= \langle W' \mid W'\in\Lambda\rangle = \langle e_1,\dots, e_m,f_1,f_2,f_3,f_m\rangle
\end{align*}
and observe that $S_\Lambda$ stabilizes $W_1$ and $W_2$. The action of $S_\Lambda$ on $\Lambda$ induces a homomorphism from some proper subgroup $H$ of  $\GL(W_2/W_1)=\GL_8(2)$ onto $S^\Lambda$. 

Suppose that $\Lambda$ is not beautiful. Then $S^\Lambda$ contains $\Alt(8)$ and so contains all double transpositions of $\Lambda$. Thus $S^\Lambda$ contains an element $g$ that interchanges the elements
\[
 \lambda_1=\langle e_1,e_2,e_3,f_m\rangle \textrm{ and }\lambda_2=\langle e_m+f_1, e_1+f_m, e_2,e_3\rangle,
\]
and fixes any 4 other elements of $\Lambda$. In particular we can choose $g$ so that it fixes
\begin{align*}
 \lambda_3&=\langle e_m+f_2, e_2+f_m, e_1, e_3\rangle, \,\,\,
 \lambda_4=\langle e_m+f_3, e_3+f_m, e_1, e_2\rangle, \\ \textrm{ and }
 \lambda_5&=\langle e_m+f_2+f_3, e_2+f_m, e_3+f_m, e_1\rangle.
 \end{align*}
But then $g$ fixes (setwise) $\lambda_3\cap\lambda_4\cap\lambda_5=\langle e_1\rangle$. This contradicts the fact that $g$ interchanges $\lambda_1$ and $\lambda_2$, and we are done.

\smallskip

\noindent\textsc{Case 2: }$M$ is the stabilizer of a non-degenerate subspace $W$ of $V$. We write $\dim(W)=m$.

\smallskip

\noindent Since in this case, $\stab_S(W)=\stab_S(W^\perp)$, we can assume that $\dim(W)<\frac{n}{2}$ except in the case $S=\Omega_n^-(q)$ with $n\equiv 0\pmod 4$. Note that if $\dim(W)$ is odd, then either $m=1$ or $q$ is odd (see \cite[Table 4.1.A]{kl} for a justification of these facts).

\smallskip

\noindent\textsc{Case 2a: } $q\geq 5$ and $W$ contains $v_1, w_1$ such that $\langle v_1, w_1\rangle $ is a hyperbolic line.\footnote{Note that, here and below, when we write that ``$\langle v_1,w_1\rangle$ is a hyperbolic line", we are also assuming that $v_1$ and $w_1$ are the usual distinguished elements, i.e. $v_1$ and $w_1$ are singular, and the scalar product $v_1\cdot w_1$ of $v_1$ with $w_1$ is $1$; in the literature this set of conditions is sometimes referred to by saying that ``$(v_1,w_1)$ is a hyperbolic pair''.}

\smallskip

\noindent Since $\dim(W)<\frac{n}{2}$ and $n\geq 7$, or $\dim(W)=n/2$ and $S=\Omega_{n}^-(q)$ with $n\ge 12$, with an elementary argument on $\dim(W^\perp)$ we find that $W^\perp$ contains elements $v_2, w_2$ where $\langle v_2, w_2\rangle$ is a hyperbolic line. In this case, define
\begin{align*}
W_0 &= \langle v_1,w_1\rangle^\perp \cap W; \\
 W_1 &= \langle W_0, w_1 \rangle; \\
 W_2 &= \langle W_0, w_1,v_1,v_2 \rangle.
\end{align*}
Now set
\[
 \Lambda = \left\langle W'\leq V \mid  \begin{array}{l} W'\textrm{ non-degenerate}, \dim(W')=\dim(W)=m, \\
 W_1\subseteq W' \subseteq W_2\end{array}\right\rangle.
\]
Observe that
\[
 W_1 = \bigcap\limits_{W'\in \Lambda} W' \textrm{ and } W_2= \langle W' \mid W'\in\Lambda\rangle.
\]
In particular, $S_\Lambda$ stabilizes $W_1$ and $W_2$. What is more there is a correspondence between elements of $\Lambda$ and 1-spaces in $W_2/W_1$ (via projection). Notice, though, that the 1-space $\langle v_2\rangle$ is not an element of $\Lambda$, since $\langle W_0, w_1, v_2\rangle$ is degenerate.

We conclude that $\Lambda$ is of size $q$, and that the action of $S_\Lambda$ on $\Lambda$ is permutation isomorphic to the action of a Borel subgroup $B$ of $\GL(W_2/W_1)\cong \GL_2(q)$ on the set of 1-spaces in $W_2/W_1$ that are not stabilized by $B$. In particular $\Lambda$ is a beautiful subset.

\smallskip
 
\noindent\textsc{Case 2b: } $q\in\{3,4\}$ and $W$ contains $v_1, w_1$ such that $\langle v_1,w_1\rangle$ is a hyperbolic line.

\smallskip

\noindent By~\textsc{Case 0}, we may assume that $n\geq 8$. If $S=\Omega_8^-(q)$ and $m=4$, then we choose $W$ to be of type ${\rm O}_4^-$. A quick case-by-case analysis reveals that in all cases $W^\perp$ contains elements $v_2, v_3, w_2, w_3 $ where $\langle v_2, w_2 \rangle$ and $\langle v_3, w_3\rangle$ are orthogonal hyperbolic lines. Extend $\{v_1, w_1, v_2, w_2, v_3, w_3\}$ to a hyperbolic basis $\mathcal{C}$. Now we take $U$ to be the subgroup of $S$ whose elements fix all elements of $\C$ except $v_1, w_2$ and $w_3$ and satisfy
 \[ 
 v_1 \mapsto v_1+\alpha v_2+\beta v_3
 \]
for some $\alpha, \beta \in \Fq$. Take $\Lambda= W^U$ and take $T$ to be a maximal torus that preserves the decomposition
\[
 \langle v_2, v_3 \rangle \oplus \langle w_2, w_3\rangle \oplus \bigoplus\limits_{\begin{array}{cc} v\in\C \\ v\not\in\{v_2, w_2, v_3, w_3\}\end{array}} \langle v\rangle
\]
and acts as a Singer cycle on $\langle v_2,v_3\rangle$. Applying Lemma~\ref{l: singers} to the space $\langle v_1,w_1\rangle^\perp$, we see that such a $T$ exists and, indeed, that it contains a subtorus that acts trivially on $\langle v_1\rangle$ and acts as a Singer cycle on $\langle v_2,v_3\rangle$. We conclude that $U\rtimes T$ acts $2$-transitively on $\Lambda$, a set of size $q^2$.  Now define
\begin{align*}
W_0 &=\langle v_1,w_1\rangle^\perp \cap W; \\
 W_1 &= \bigcap\limits_{W'\in \Lambda} W' = \langle W_0, w_1\rangle; \\
 W_2 &= \langle W' \mid W'\in\Lambda\rangle = \langle  W_0, w_1, v_1,v_2,v_3\rangle
\end{align*}
and observe that $S_\Lambda$ stabilizes $W_1$ and $W_2$, and the action of $S_\Lambda$ on $\Lambda$ induces a homomorphism from some subgroup $H$ of  $\GL(W_2/W_1)\cong\GL_3(q)$ onto $S^\Lambda$. If $\Lambda$ is not beautiful, then we obtain an epimorphism from a subgroup of $\GL_3(q)$ onto $\Alt(q^2)$, which is impossible (see, for instance, \cite[Proposition 5.3.7]{kl}).

\smallskip

\smallskip
 
\noindent\textsc{Case 2c: } $q=2$ and $W$ contains $v_1, w_1$ such that $\langle v_1,w_1\rangle$ is a hyperbolic line.

\smallskip

\noindent By~\textsc{Case 0}, we may assume that $n\geq 10$. Note, moreover, that since $q$ is even, $\dim(W)$ may be assumed to be even. Suppose, first, that either $S=\Omega_{10}^+(2)$ and $W$ is of type ${\rm O}_4^-(2)$, or else $S=\Omega_{10}^-(2)$ and $W$ is of type ${\rm O}_4^+(2)$. In either case $W^\perp$ is of type ${\rm O}_6^-(2)$. Write $S=\Omega_{10}^\varepsilon(2)$ with $\varepsilon\in\{+,-\}$. Let $W'$ be a $2$-dimensional non-degenerate subspace of $V$ of type $\mathrm{O}_2^{-\varepsilon}(2)$ and set $H=\stab_S(W')$. Clearly, $H$ is a subspace stabilizer of type $\mathrm{O}_2^{-\varepsilon }(2)\times \mathrm{O}_8^{-}(2)$ and $H$ induces the group $\mathrm{O}_8^{-}(2)$ on $(W')^\perp$. From the footnote~\ref{fn123}, the action of $H$ on the $2$-dimensional non-degenerate subspaces of $(W')^\perp$ with stabilizers of type $\mathrm{O}_2^+(2)\times \mathrm{O}_6^{-}(2)$ admits a beautiful subset $\Lambda'$ of size $5$ with the property that $\bigcap_{W''\in \Lambda'}W''=0$. Now, set $\Lambda:=\{W'\oplus W''\mid W''\in\Lambda'\}$ and observe that $\Lambda$ consists of five $4$-dimensional non-degenerate subspaces of $V$ of type $\mathrm{O}_4^{-\varepsilon}(2)$, that is, the stabilizer of an element of $\Lambda$ in $S$ is a subspace stabilizer of type $\mathrm{O}_4^{-\varepsilon}(2)\times\mathrm{O}_6^-(2)$. Set $K:=\stab_S(\Lambda)$. Then $K$ stabilizes $\cap_{W\in \Lambda}W=W'$ and hence $K\leq H$. From this it immediately follows that the action of $K$ on $\Lambda$ is permutation isomorphic to the action of $H$ on $\Lambda'$ and hence $\Lambda$ is a beautiful subset of size $5$.

From here on we exclude the two cases just discussed. What is more, if $S=\Omega_{12}^-(2)$, then we choose $W$ to be of type ${\rm O}_6^-$. Now a quick case-by-case analysis reveals that in all cases $W^\perp$ contains elements $v_2, v_3, v_4, w_2, w_3, w_4 $ where $\langle v_2, w_2 \rangle$, $\langle v_3, w_3\rangle$  and $\langle v_4, w_4\rangle$ are mutually orthogonal hyperbolic lines. Extend the set $\{v_1, w_1, v_2, w_2, v_3, w_3, v_4, w_4\}$ to a hyperbolic basis $\mathcal{C}$. Now we take $U$ to be the subgroup of $S$ whose elements fix all elements of $\C$ except $v_1, w_2, w_3$ and $w_4$ and satisfy
 \[ 
 v_1 \mapsto v_1+\alpha v_2+\beta v_3+\gamma v_4
 \]
for some $\alpha, \beta,\gamma \in \Fq$. Take $\Lambda= W^U$ and take $T$ to be a maximal torus that preserves the decomposition
\[
 \langle v_2, v_3,v_4 \rangle \oplus \langle w_2, w_3,w_4\rangle \oplus \bigoplus\limits_{\begin{array}{cc} v\in\C \\ v\not\in\{v_2, w_2, v_3, w_3, v_4, w_4\}\end{array}} \langle v\rangle
\]
and acts as a Singer cycle on $\langle v_2,v_3,v_4\rangle$. Applying Lemma~\ref{l: singers} to the space $\langle v_1,w_1\rangle^\perp$, we see that such a $T$ exists and, indeed, that it contains a subtorus that acts trivially on $\langle v_1\rangle$ and acts as a Singer cycle on $\langle v_2,v_3\rangle$. We conclude that $U\rtimes T$ acts $2$-transitively on $\Lambda$, a set of size $8$.  Now define
\begin{align*}
W_0 &=\langle v_1,w_1\rangle^\perp \cap W; \\
 W_1 &= \bigcap\limits_{W'\in \Lambda} W' = \langle W_0, w_1\rangle; \\
 W_2 &= \langle W' \mid W'\in\Lambda\rangle = \langle  W_0, w_1, v_1,v_2,v_3,v_4\rangle
\end{align*}
and observe that $S_\Lambda$ stabilizes $W_1$ and $W_2$, and the action of $S_\Lambda$ on $\Lambda$ induces a homomorphism from some subgroup $H$ of  $\GL(W_2/W_1)\cong\GL_4(2)$ onto $S^\Lambda$. (Note that $H$ is a \emph{proper} subgroup since, for instance, it stabilizes the $3$-space $\langle v_2,v_3,v_4,W_1\rangle/W_1$.) If $\Lambda$ is not beautiful, then we obtain an epimorphism from a proper subgroup of $\GL_4(2)$ onto $\Alt(8)$, which is impossible.

\smallskip

\noindent\textsc{Case 2d: } $q$ is odd and $W$ does not contain a hyperbolic line.

\smallskip

\noindent Here $n\geq 7$ and $W$ is anisotropic. Since $\dim(W)\leq 2$, we know that $W^\perp$ contains elements $v_2, v_3, w_2, w_3 $ where $\langle v_2, w_2 \rangle$ and $\langle v_3, w_3\rangle$ are orthogonal hyperbolic lines. Extend $\{v_2, w_2, v_3, w_3\}$ to a basis $\C_1$, for $W^\perp$. Now extend $\C_1$ to a basis $\C$ for $V$ by adding elements that form a basis for $W$ -- so we add either one or two elements. If $|W\cap\C|=1$, then we write $W\cap\C=\{x\}$; if $|W\cap\C|=2$, then we write $W\cap\C=\{x,y\}$ and require that $x\cdot y=1$. In either case we write $\gamma=x\cdot x$. 

If $\dim(W)=1$, then we define $U$ to be the subgroup of $S$ whose elements fix all elements of $\C$ except $x$, $v_2$ and $v_3$ and satisfy
 \[
  x \mapsto x+\alpha w_2+\beta w_3,\,\, 
  v_2 \mapsto v_2-\frac{\alpha}{\gamma} x - \frac{\alpha^2}{2\gamma} w_2,\,\,
  v_3 \mapsto v_3-\frac{\beta}{\gamma} x  - \frac{\beta^2}{2\gamma} w_3,
 \]
for some $\alpha, \beta \in \Fq$. If $\dim(W)=2$, then we define $U$ to be the subgroup of $S$ whose elements fix all elements of $\C$ except $x$, $y$, $v_2$ and $v_3$ and satisfy
 \begin{alignat*}{2}
  x &\mapsto x+\alpha w_2+\beta w_3, \,\,   &y \mapsto y+\frac{\alpha}{\gamma} w_2+\frac{\beta}{\gamma} w_3,\\
  v_2 &\mapsto v_2-\frac{\alpha}{\gamma} x - \frac{\alpha^2}{2\gamma} w_2, \,\,
  &v_3\mapsto v_3-\frac{\beta}{\gamma} x  - \frac{\beta^2}{2\gamma} w_3,
 \end{alignat*}
for some $\alpha, \beta \in \Fq$. In both cases, take $\Lambda= W^U$ and take $T$ to be a maximal torus that preserves the decomposition
\[
 \langle v_2, v_3 \rangle \oplus \langle w_2, w_3\rangle \oplus \bigoplus\limits_{\begin{array}{cc} v\in\C \\ v\not\in\{v_2, w_2, v_3, w_3\}\end{array}} \langle v\rangle
\]
and acts as a Singer cycle on $\langle v_2,v_3\rangle$. Applying Lemma~\ref{l: singers} to the space $W^\perp$, we see that such a $T$ exists and, indeed, that it contains a subtorus that acts trivially on $\langle x\rangle$ and acts as a Singer cycle on $\langle v_2,v_3\rangle$. We conclude that $U\rtimes T$ acts $2$-transitively on $\Lambda$, a set of size $q^2$. Now define
\[ W_2 = \langle W' \mid W'\in\Lambda\rangle = 
\begin{cases}
  \langle x, w_2, w_3\rangle, &\textrm{if }|U|=q; \\
  \langle x, y, w_2, w_3\rangle, &\textrm{if }|U|=q^2.
  \end{cases}
\]
Observe that $S_\Lambda$ stabilizes $W_2$, and the action of $S_\Lambda$ on $\Lambda$ induces a homomorphism from some subgroup $H$ of  $\GL(W_2)$ (which is isomorphic to either $\GL_3(q)$ or $\GL_4(q)$) onto $S^\Lambda$. We conclude that $S^\Lambda$ does not contain $\Alt(q^2)$, and so $\Lambda$ is a beautiful subset.

\smallskip

\noindent\textsc{Case 2e: } $q=2^a$ and $W$ does not contain a hyperbolic line.

\smallskip

\noindent In this case $W$ is either anisotropic of dimension $1$, or is of type ${\rm O}_2^-$. In the former case, Witt's Lemma implies that $W$ lies inside a subspace of type ${\rm O}_2^-$ which we call $X$. In the latter case we set $X=W$. So, in both cases, $X$ is of type ${\rm O}_2^-$.

Referring to \cite[Lemma 2.5.2]{kl} we write $X=\langle x,y\rangle$ where $(Q(x), Q(y), x\cdot y) = (1, \zeta, 1)$ for $Q$ the quadratic form associated to $S$, and $\zeta$ an element of $\Fq$ such that $x^2+x+\zeta$ is irreducible over $\Fq$. If $W$ is a proper subspace of $X$, then we choose a labeling so that $x\in W$; thus, in any case, $x\in W$.

Suppose, first, that $a\geq 3$ (i.e.\ $q\geq 8$). Since $n\geq 8$, we know that there exist $v_2, w_2\in X^{\perp}$ such that $\langle v_2, w_2\rangle$ is a hyperbolic line. Extend the set $\{v_2, w_2\}$ to a hyperbolic basis $\C_1$ for $X^\perp$, and then extend this to a basis $\C$ for $V$ by adding the elements $x$ and $y$. Now define $U$ to be the subgroup of $S$ whose elements fix all elements of $\C$ except $x$ and $v_2$ and satisfy
\[  x \mapsto x+\alpha w_2, \,\,\,\,\,
  v_2 \mapsto v_2 + \alpha y+ \zeta \alpha^2 w_2, 
 \]
for some $\alpha \in \Fq$. Take $\Lambda= W^U$ and take $T$ to be the stabilizer of the direct sum decomposition
$ \bigoplus\limits_{v\in\C} \langle v\rangle.$
Then $U\rtimes T$ acts $2$-transitively on $\Lambda$, a set of size $q$. Now define
\begin{align*}
 W_1 &= \bigcap\limits_{W'\in \Lambda} W'; \\
 W_2 &= \langle W' \mid W'\in\Lambda\rangle = \langle W_1, x, w_2\rangle
\end{align*}
and observe that $S_\Lambda$ stabilizes $W_1$ and $W_2$, and the action of $S_\Lambda$ on $\Lambda$ induces a homomorphism from some subgroup $H$ of  $\GL(W_2/W_1)\cong\GL_2(q)$ onto $S^\Lambda$. We conclude that $S^\Lambda$ does not contain $\Alt(q)$, and so $\Lambda$ is a beautiful subset.

Suppose, next, that $q=4$. Since $n\geq 8$, we know that there exist $v_2, w_2, v_3, w_3\in X^{\perp}$ such that $\langle v_2, w_2\rangle$ and $\langle v_3, w_2\rangle$ are orthogonal hyperbolic lines. Extend the set $\{v_2, w_2, v_3, w_3\}$ to a hyperbolic basis $\C_1$ for $X^\perp$, and then extend this to a basis $\C$ for $V$ by adding the elements $x$ and $y$. Now define $U$ to be the subgroup of $S$ whose elements fix all elements of $\C$ except $x, v_2$ and $v_3$ and satisfy
\[  x \mapsto x+\alpha w_2 + \beta w_3, \,\,\,\,\,
  v_2 \mapsto v_2 + \alpha y+ \zeta \alpha^2 w_2,\,\,\,\,\,
  v_3 \mapsto v_3 + \beta y+ \zeta \beta^2 w_3, 
 \]
for some $\alpha, \beta \in \Fq$. Take $\Lambda= W^U$ and take $T$ to be a maximal torus of $S$ that stabilizes the decomposition
\[
 \langle v_2, v_3 \rangle \oplus \langle w_2, w_3\rangle \oplus \bigoplus\limits_{\begin{array}{cc} v\in\C \\ v\not\in\{v_2, w_2, v_3, w_3\}\end{array}} \langle v\rangle
\]
and acts as a Singer cycle on $\langle v_2,v_3\rangle$. Now $U\rtimes T$ acts $2$-transitively on $\Lambda$, a set of size $16$. As before, we set
\begin{align*}
 W_1 &= \bigcap\limits_{W'\in \Lambda} W'; \\
 W_2 &= \langle W' \mid W'\in\Lambda\rangle = \langle W_1, x, w_2, w_3\rangle.
\end{align*}
 Now the action of $S_\Lambda$ on $\Lambda$ induces a homomorphism from some subgroup $H$ of  $\GL(W_2/W_1)\cong\GL_3(4)$ onto $S^\Lambda$. We conclude that $S^\Lambda$ does not contain $\Alt(16)$, and so $\Lambda$ is a beautiful subset of size $q^2=16$.

Finally, when $q=2$, we proceed similarly and we use the fact that, by \textsc{Case 0},  $n\geq 10$. With similar definitions, $U$ is the subgroup whose elements fix all elements of $\C$ except $x, v_2, v_3$ and $v_4$ and satisfy
\begin{align*}  x &\mapsto x+\alpha w_2 + \beta w_3 + \gamma w_4, \,\,\,\,\,
  &v_2 \mapsto v_2 + \alpha y+ \zeta \alpha^2 w_2, \\
  v_3 &\mapsto v_3 + \beta y+ \zeta \beta^2 w_3, \,\,\,\,\,
  &v_4 \mapsto v_4 + \gamma y+ \zeta \gamma^2 w_4, 
 \end{align*}
for some $\alpha, \beta, \gamma \in \Fq$. Take $\Lambda= W^U$ and take $T$ to be a maximal torus of $S$ that stabilizes the decomposition
\[
\langle v_2, v_3,v_4 \rangle \oplus \langle w_2, w_3, w_4\rangle \oplus \bigoplus\limits_{\begin{array}{cc} v\in\C \\ v\not\in\{v_2, w_2, v_3, w_3,v_4,w_4\}\end{array}} \langle v\rangle
\]
and acts as a Singer cycle on $\langle v_2,v_3,v_4\rangle$. Now $U\rtimes T$ acts $2$-transitively on $\Lambda$, a set of size $8$. As before, we set
\begin{align*}
 W_1 &= \bigcap\limits_{W'\in \Lambda} W'; \\
 W_2 &= \langle W' \mid W'\in\Lambda\rangle = \langle W_1, x, w_2, w_3,w_4\rangle.
\end{align*}
 Now the action of $S_\Lambda$ on $\Lambda$ induces a homomorphism from some proper subgroup $H$ of $\GL(W_2/W_1)\cong\GL_4(2)$ onto $S^\Lambda$. (Note that $H$ is a \emph{proper} subgroup since, for instance, it stabilizes the $3$-space $\langle w_2,w_3,w_4\rangle+W_1$.) We conclude that $S^\Lambda$ does not contain $\Alt(8)$, and so $\Lambda$ is a beautiful subset of size $8$.
 \end{proof}

\subsection{Proof of Theorem~B}

In view of Propositions~\ref{prop1},~\ref{prop2},~\ref{prop3} and~\ref{prop4}, to prove Theorem~B, we must deal with all of those actions listed in Table~\ref{t: c1 sln}. Note that this table lists actions for the group $S$, and we must deal with actions for each almost simple group $G$ with socle isomorphic to $S/Z(S)$. The relevant actions are listed in Table~\ref{t: remaining}. In compiling Table~\ref{t: remaining}, we make use of the information in Table~\ref{t: c1 sln}, of~\cite{ATLAS} and of the various isomorphisms between non-abelian simple groups (for instance $\PSL_2(4)\cong \PSL_2(5)\cong \Alt(5)$, $\PSL_2(9)\cong\Alt(6)\cong \Sp_4(2)'$ and $\PSU_4(2)\cong \mathrm{PSp}_4(3)$). Observe that under the isomorphism $\PSU_4(2)\cong \mathrm{PSp}_4(3)$, the stabilizer of a $1$-dimensional non-isotropic subspace in $\PSU_4(2)$ corresponds to the stabilizer of a $1$-dimensional totally isotropic subspace in $\mathrm{PSp}_4(3)$, in particular we have only one action of degree $40$ to consider in Table~\ref{t: remaining}.

\begin{center}
\begin{table}[!htbp]
\begin{tabular}{ccc}%{@{\makebox[3em][c]{\rownumber\space}} cc}
\toprule[1.5pt]
Line&Group & Degree of actions \\
\midrule[1.5pt]
1 &$\Alt(5)$ or $\Sym(5)$  & 5 \\
2&$\SL_3(2):2$ & 21, 28\\
3&$\mathrm{PSL}_3(3):2$& 52 \\
4& $\PSU_4(2)$ or $\PSU_4(2):2$ & 27, 40\\
5& $\PSU_5(2)$ or $\PSU_5(2):2$& 3520\\
6&  $\Alt(6)$ or $\Sym(6)$ & 15, 15 \\
7&$\mathrm{P}\Omega_7(3)$ and $\mathrm{PSO}_7(3)$ & $351$\\
 \bottomrule[1.5pt]
 \end{tabular}
\caption{Remaining actions.}\label{t: remaining}
\end{table}
\end{center}
Line~$1$ of Table~\ref{t: remaining} yields the example given in Theorem~B. All other examples with index at most 100 can be dealt with easily via computer; indeed, their non-binariness, and more, has been confirmed by Wiscons~\cite{wiscons2}.

This leaves Line~5 and~7. Using \texttt{magma} we can see that the primitive groups arising from Lines~5 and~7 are not binary: for both lines, we have constructed the permutation representations of degree $3520$ and $351$ (respectively) and we have found two non-binary witnesses of length $3$.

\bibliography{paper6}{}
\bibliographystyle{plain} 
\end{document}